\numberwithin{equation}{section}
\newcommand{\eps}{\varepsilon}
\newcommand{\grad}{\nabla}
\newtheorem{theorem}{Theorem}[section]
\newtheorem{lemma}{Lemma}[section]
\newtheorem{remark}{Remark}[section]
\newtheorem{cor}{Corollary}[section]
\newtheorem{definition}{Definition}[section]
\newcommand{\R}{{\mathbb R}}
\title{A basic homogenization problem for the $p$-Laplacian in $\R^d$ perforated along a sphere: $L^\infty$ estimates}
\author {Peter  V. Gordon
\thanks{Department of Mathematical Sciences,
Kent State University,
 Kent, OH 44242, USA. E-mail: {\tt gordon@math.kent.edu}}
\and Fedor Nazarov
\thanks{Department of Mathematical Sciences,
Kent State University,
 Kent, OH 44242, USA. E-mail: {\tt nazarov@math.kent.edu}}
 \and
Yuval Peres
\thanks{Department of Mathematical Sciences,
Kent State University,
 Kent, OH 44242, USA. E-mail: {\tt yuval@yuvalperes.com}}
}
\date{\today}
\begin{document}

\maketitle

\begin{abstract}
We consider a boundary value problem for the $p$-Laplacian, posed in the exterior of  small cavities that all have the same $p$-capacity and are anchored to the unit sphere in $\mathbb{R}^d$, where $1<p<d.$
 We assume that the distance between anchoring points is at least $\eps$ and the characteristic  diameter of cavities is $\alpha \eps$, where $\alpha=\alpha(\eps)$ tends to 0 with $\eps$. We also assume that anchoring points  are asymptotically uniformly distributed as $\eps \downarrow 0$, and their number is asymptotic to   a positive constant times $\eps^{1-d}$.
 The solution $u=u^\eps$ is required  to be 1 on all cavities and decay to 0 at infinity.
 Our goal   is to describe the  behavior of solutions  for small $\eps>0$. We show that the problem possesses a critical window  characterized by
 $\tau:=\lim_{\eps \downarrow 0}\alpha /\alpha_c  \in (0,\infty)$, where   $\alpha_c=\eps^{1/\gamma}$ and $\gamma= \frac{d-p}{p-1}.$  We prove that outside
  the unit sphere, as $\eps\downarrow 0$, the solution converges to $A_*U$  for some   constant $A_*$, where   $U(x)=\min\{1,|x|^{-\gamma}\}$ is the radial $p$-harmonic function outside the unit ball.   Here the constant $A_*$ equals 0 if $\tau=0$, while $A_*=1$ if $\tau=\infty$.   In the critical window where $\tau$ is positive and finite, $ A_*\in(0,1)$  is explicitly computed in terms of the parameters of the problem.
 We also evaluate the limiting $p$-capacity in all three cases mentioned above. Our key new tool is the construction of an explicit ansatz function $u_{A_*}^\eps$ that approximates the solution $u^\eps$
 in $L^{\infty}(\mathbb{R}^d)$ and satisfies $\|\nabla u^\eps-\nabla u_{A_*}^\eps \|_{L^{p}(\mathbb{R}^d)} \to 0$  as $\eps \downarrow 0$.
 \end{abstract}

\medskip

\noindent {\bf Keywords:}  $p$-Laplacian, $p$-capacity, homogenization, $L^{\infty}$ estimates.

\medskip

\noindent {\bf Mathematics Subject Classification MSC 2020:}  35J92,  31C45,  35B27,  35B40,  35J20,   35J25.
\medskip

%




\section{Introduction}
In this paper we consider  a boundary value problem for the $p$-Laplacian in a domain obtained from $\R^d$ by perforating it 
(i.e., removing small cavities) along the unit sphere.  The problem  is formulated as follows.
Given $\eps>0$, let $S=S(\eps)$ be a finite set of points   on the  unit sphere $\mathbb{S}^{d-1}\subset \mathbb{R}^d$ such  that  
 the Euclidean distance between any two points in $S$ is at least $\eps.$ Points in $S$ will be referred to as {\bf anchors}.
\begin{figure}[h]
\includegraphics[width=3.5 in]{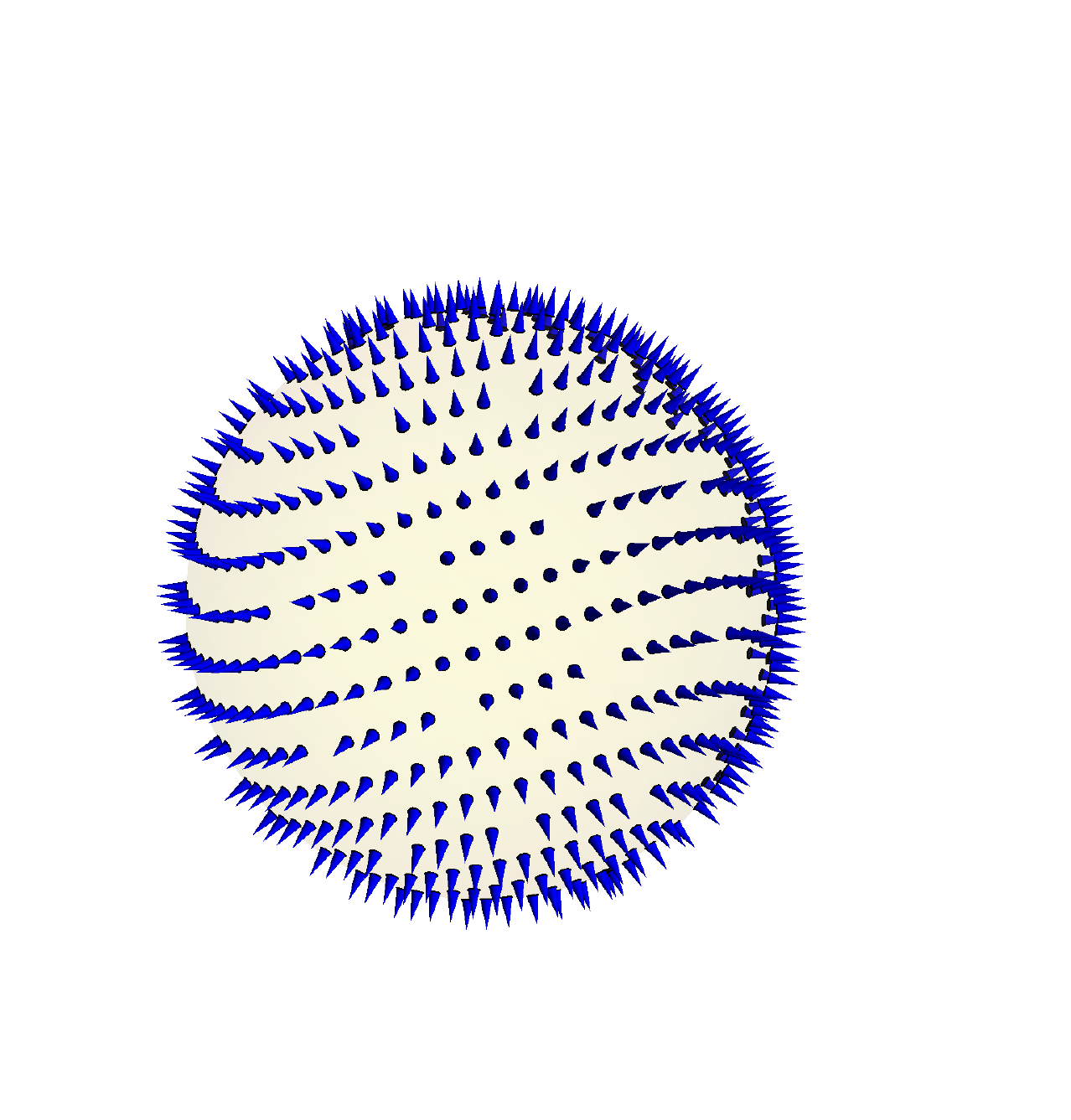}  \includegraphics[width=3.5 in]{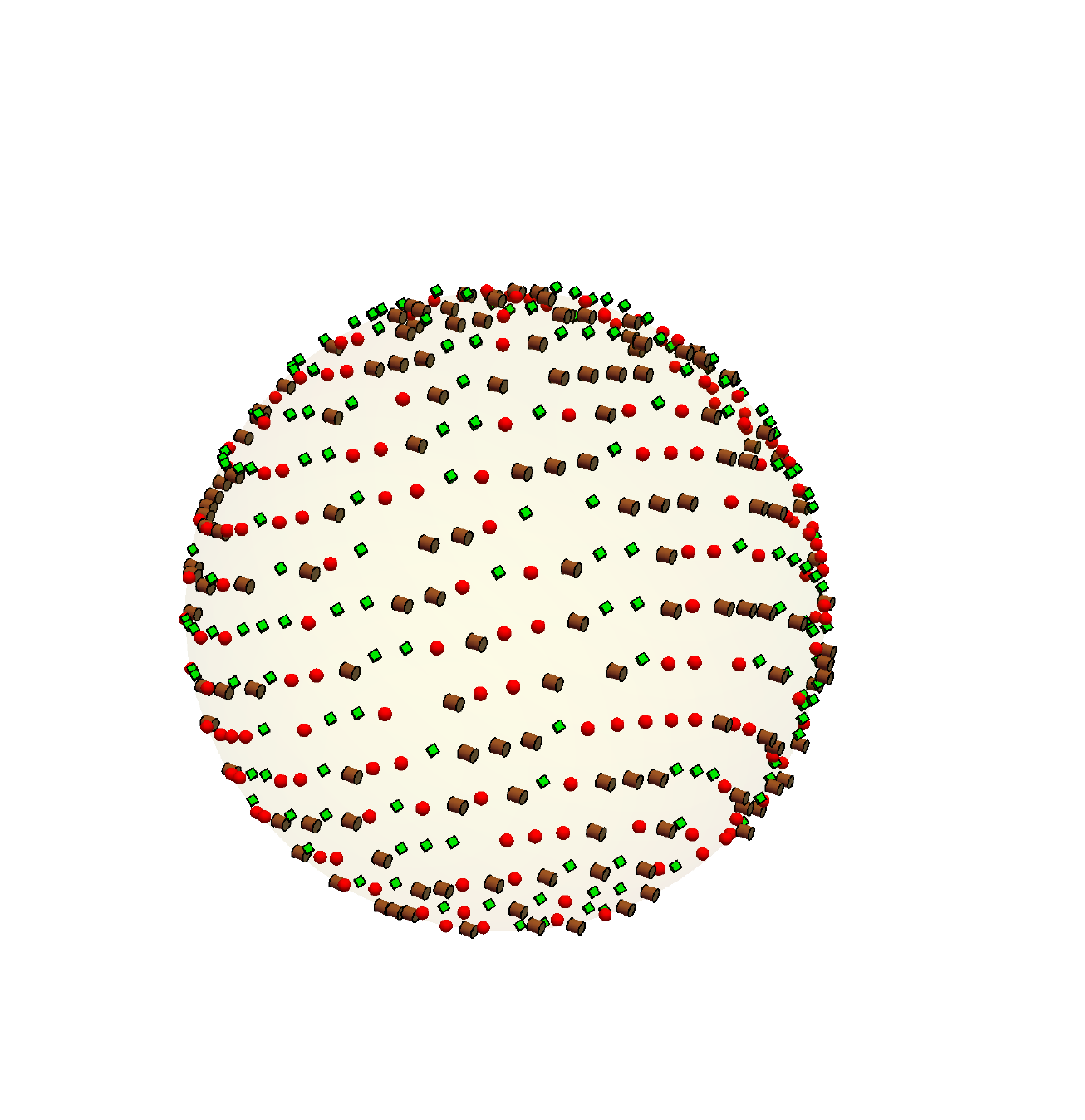}
\caption{
Two examples of sets $\Gamma$ in $\mathbb{R}^3$ that are unions of small cavities  anchored to the unit sphere $\mathbb{S}^2$:
congruent cones (left) and spheres, cubes and cylinders of the same $p$-capacity (right).
}
\label{fig:sphere}
\end{figure}
For each anchor $s$, let $K_s$ be a compact subset of the closed unit ball $\bar{B}(0,1) \subset \R^d$.
Let $0<\alpha=\alpha(\eps)\le \frac{1}{80}$  satisfy $\alpha(\eps) \to 0$ as $\eps \to 0$.
Define
\begin{eqnarray}\label{eq:1}
\Gamma=\Gamma_\eps:=\underset{s\in S} {\bigcup } (s+\alpha\eps K_s) \,.
\end{eqnarray}
 Two examples of such  sets are depicted in Figure \ref{fig:sphere}.

Assume that $1<p<d$ and  let $u=u^{\eps}$ be the Perron solution of the boundary value problem
\begin{eqnarray}\label{eq:2}
\left\{
\begin{array}{lll}
\Delta_p u= {\rm div}( |\nabla u|^{p-2} \nabla u)=0 &\mbox{in} & \mathbb{R}^d\setminus  \Gamma,\\
u=1 & \mbox{on} & \Gamma,\\
u(x) \to 0 & \mbox{as} & |{ x}| \to \infty.
\end{array}
\right.
\end{eqnarray}
(We recall the definition of Perron solutions in Section 2.) We are interested in the asymptotic behavior of $u^{\eps}$ as $\eps\to 0$ under the following  two
 \noindent{\bf key hypotheses:}
\begin{description}
\item  [(${\bf  H_1}$)]  The anchors are asymptotically equidistributed, that is
\begin{eqnarray}\label{eq:4}
\eps^{d-1}
 \sum_{s\in S(\eps)} \delta_s \overset{\ast} \rightharpoonup \sigma \mu \: ~ \mbox{as} ~ \eps\to 0,
\end{eqnarray}
where $\mu$ is the  uniform probability measure on the unit sphere $\mathbb{S}^{d-1}$
and   $\sigma>0$.
 \item [(${\bf  H_2}$)]   All the sets $K_s$  have the same {\bf ${\mathbf p}$-capacity}, i.e., there is some compact set $K \subseteq  \bar B(0,1)  \subset \R^d$
 such that ${\rm cap}_p(K_s)={\rm cap}_p(K)>0$ for all $\eps>0$ and $s \in S(\eps)$, where
\begin{eqnarray}
{\rm cap}_p(K ):=\inf\left\{ \int_{\mathbb{R}^d} \vert \nabla \psi \vert^p: ~\psi\in C_0^{\infty} (\mathbb{R}^d), ~ \psi\ge 1 ~\mbox{on} ~ K\right \},
\end{eqnarray}
see \cite[Chapter 2]{hkm}. %
A special case to keep in mind is when all the $K_s$ are rotated copies of $K$, as in the left part of Figure \ref{fig:sphere}.
\end{description}
The Perron solution $u$ of   \eqref{eq:2} is related to  the $p$-capacity of $\Gamma.$
Specifically, the following identity holds \cite[Theorems 9.33 and 9.35]{hkm}:
\begin{eqnarray}
{\rm cap}_p(\Gamma)= \int_{\mathbb{R}^d} \vert \nabla u \vert^p.
\end{eqnarray}
In this context, $u$ is called the $p$-equilibrium potential of $\Gamma.$
In previous works in similar setups \cites{KS14,KS16,GPPS} (summarized at the end of this section), asymptotics of $u^\eps$ in the Sobolev space $H^{1,p}$ were studied. The methods of the present paper enable us to obtain   precise $L^\infty$ asymptotics.

To motivate our results,  note that
since the $p$-capacity is sub-additive  \cite[Theorem 2.2]{hkm},  we have:
\begin{eqnarray} \label{eq:subadd}
  {\rm cap}_p(\Gamma )  \le  \underset{s\in S} \sum  {\rm cap}_p\bigl(s+\alpha\eps K_s\bigr)=
|S| (\alpha\eps)^{d-p} {\rm cap}_p( K)=   \frac{(\alpha\eps)^{d-p}}{\eps^{d-1}}  (\eps^{d-1} |S|) {\rm cap}_p( K) \,,
\end{eqnarray}
where we used the easily checked scaling relation
\begin{eqnarray}
{\rm cap}_p(a K)=a^{d-p} {\rm cap}_p(K), \qquad \forall  a>0.
\end{eqnarray}
Taking into account that $  \eps^{d-1} |S| \to \sigma $ as $\eps \to 0$, we obtain that
\begin{eqnarray}\label{eq:neweq1}
{\rm cap}_p(\Gamma )\le C \left(\frac{\alpha}{\alpha_c}\right)^{d-p} \,,
  \end{eqnarray}
  where $C$ is some constant independent of $\eps,$
 \begin{eqnarray}\label{eq:3}
 \alpha_c:=\eps^{\frac{1}{\gamma}} \,,
\end{eqnarray}
and
  \begin{eqnarray}\
\gamma:= \frac{d-p}{p-1} \,.
\end{eqnarray}

On the other hand, since $p$-capacity is monotone and $\Gamma \subset \bar B(0,1+\eps)$, we have
\begin{eqnarray} \label{eq:mono}
{\rm cap}_p(\Gamma )\le (1+\eps)^{d-p} {\rm cap}_p(\bar B(0,1)).
\end{eqnarray}
Thus, when the limiting value (as $\eps\to 0$) of $\frac{\alpha}{\alpha_c}$ increases from $0$ to $\infty$, the   natural upper  bounds \eqref{eq:neweq1} and \eqref{eq:mono}
for ${\rm cap}_p(\Gamma)$ cross each other. It turns out that in the same regime, the
solution $u$ of \eqref{eq:2} (outside of $\mathbb{S}^{d-1}$)
gradually transitions from $0$ to the $p$-equilibrium potential $U$ of the unit ball, defined as the solution of
\begin{eqnarray}\label{eq:U}
\left\{
\begin{array}{lll}
\Delta_p U=0 &\mbox{in} & \mathbb{R}^d\setminus \bar{B}(0,1),\\
U=1 & \mbox{on} & \bar{B}(0,1),\\
U(x) \to 0 & \mbox{as} & |x| \to \infty \,,
\end{array}
\right.
\end{eqnarray}
and given by
\begin{eqnarray}\label{eq:solU}
U(x):=\left\{
\begin{array}{ll}
1& 0\le |x| \le 1 \,,\\
|x|^{-\gamma} & |x|>1 \,.
\end{array}
\right.
\end{eqnarray}
The discussion above motivates our third hypothesis:
\begin{description}
  \item [(${\bf  H_3}$)] $\lim_{\eps \to 0} \alpha(\eps)  \eps^{-1/\gamma} =\tau \in [0,\infty]$.
\end{description}
Most of the paper will be devoted to the analysis of the critical window, where $0<\tau<\infty$.
Our first  result describes the asymptotics of  the capacity ${\rm cap}_p(\Gamma_\eps)$ and of the potential $u=u^\eps$ away from the unit sphere.
\begin{theorem}\label{t:1}
Suppose that hypotheses ${\rm (H_1),(H_2),(H_3)}$ hold.
  Then, as $ \eps\to 0$,
\begin{eqnarray} \label{eq:thm1A}
u^\eps(x)\to \left\{
\begin{array}{lll}
0& \mbox{if} & \tau=0 \,,\\
A_* U(x) & \mbox{if} &\  \tau\in(0,\infty) \,,\\
U(x) &  \mbox{if} & \tau= \infty \,
\end{array}
\right.
\end{eqnarray}
uniformly on compact subsets of  $\mathbb{R}^d\setminus \mathbb{S}^{d-1},$
where for $\tau \in [0,\infty)$,
\begin{eqnarray}\label{eq:A*}
A_*=
A_*(\tau)=\frac{\left(\sigma \tau^{d-p} {\rm cap}_p(K)\right)^\frac{1}{p-1}}{\left(\sigma \tau^{d-p} {\rm cap}_p(K)\right)^\frac{1}{p-1}+\left({\rm cap}_p(\bar B(0,1))\right)^\frac{1}{p-1}} \,.
\end{eqnarray}
Furthermore,
\begin{eqnarray} \label{eq:thm1B}
{\rm cap}_p(\Gamma_\eps)\to \left\{
\begin{array}{lll}
0& \mbox{if} & \tau=0 \,,\\
A_*^p {\rm cap}_p (\bar B(0,1))+(1-A_*)^p  {\rm cap}_p (K) \sigma \tau^{d-p} & \mbox{if} &\  \tau\in(0,\infty) \,,\\
{\rm cap}_p(\bar B(0,1))  &  \mbox{if} & \tau= \infty \, .
\end{array}
\right.
\end{eqnarray}
\end{theorem}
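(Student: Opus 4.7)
My approach is to introduce, for each $A\in[0,1]$, an explicit ansatz $u_A^\eps$, show its energy has an explicit limit, and then use the variational characterization $\text{cap}_p(\Gamma_\eps)=\int|\nabla u^\eps|^p$ to pin down the optimal $A=A_*$. Outside $\bigcup_{s\in S}B(s,c\eps)$ with $c<1/2$ (so the balls are disjoint by the $\eps$-separation of anchors), set $u_A^\eps=AU$ with $U$ as in \eqref{eq:solU}; inside each $B(s,c\eps)$, set $u_A^\eps=AU+(1-A)\,w_s$, where $w_s$ is the $p$-harmonic function in $B(s,c\eps)\setminus(s+\alpha\eps K_s)$ that equals $1$ on the cavity and $0$ on $\partial B(s,c\eps)$. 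This makes $u_A^\eps$ continuous, admissible for ${\rm cap}_p(\Gamma_\eps)$, and decaying at infinity. Outside the anchor balls, whose total volume is $O(\eps)$, the energy contribution tends to $A^p\int|\nabla U|^p=A^p{\rm cap}_p(\bar B(0,1))$. Inside each ball, since $|\nabla U|$ is bounded while $w_s$ is concentrated, the leading contribution is $(1-A)^p\int|\nabla w_s|^p$, and $\int|\nabla w_s|^p\to{\rm cap}_p(\alpha\eps K_s)=(\alpha\eps)^{d-p}{\rm cap}_p(K)$ as the scale ratio $c\eps/(\alpha\eps)\to\infty$. Summing over the $|S|\sim\sigma\eps^{1-d}$ anchors and using $(\alpha\eps)^{d-p}\eps^{1-d}=(\alpha/\alpha_c)^{d-p}\to\tau^{d-p}$ gives
\[
\int_{\mathbb{R}^d}|\nabla u_A^\eps|^p \;\longrightarrow\; A^p\,{\rm cap}_p(\bar B(0,1))+(1-A)^p\,\sigma\tau^{d-p}\,{\rm cap}_p(K).
\]
Minimizing in $A$ recovers \eqref{eq:A*} and gives the upper bound for ${\rm cap}_p(\Gamma_\eps)$ in \eqref{eq:thm1B} (critical regime).

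For the matching lower bound (the hard step), I would use weak $W^{1,p}_{\rm loc}$ compactness of $\{u^\eps\}$, together with the maximum principle $0\le u^\eps\le 1$, to extract a subsequential limit $u^*$. Since $u^\eps$ is $p$-harmonic outside $\Gamma_\eps$ and $\Gamma_\eps$ collapses onto $\mathbb{S}^{d-1}$, $u^*$ is $p$-harmonic in $\mathbb{R}^d\setminus\mathbb{S}^{d-1}$ with $u^*(x)\to 0$ as $|x|\to\infty$; let $f$ denote its trace on $\mathbb{S}^{d-1}$. Split $\int|\nabla u^\eps|^p$ into a ``far field'' (outside $\bigcup_s B(s,c\eps)$) and a ``near field'' (inside those balls). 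The far field yields $\liminf\ge\int|\nabla u^*|^p$ by lower semi-continuity. For the near field, one shows that $u^\eps$ is essentially $f(s)$ on $\partial B(s,c\eps)$ (this uses the regularity of $u^\eps$ on shrinking annuli combined with Poincar\'e on an intermediate dyadic annulus, chosen so that the gradient is small there on average), after which the capacitary lower bound gives a contribution at least $(1-f(s))^p(\alpha\eps)^{d-p}{\rm cap}_p(K)-o(1)$. Summing against $\eps^{d-1}\sum_s\delta_s\rightharpoonup\sigma\mu$ yields
\[
\liminf_{\eps\to 0}\int|\nabla u^\eps|^p \;\ge\; \int|\nabla u^*|^p+\sigma\tau^{d-p}{\rm cap}_p(K)\int_{\mathbb{S}^{d-1}}(1-f)^p\,d\mu.
\]
Minimizing the right side over admissible $u^*$: the limiting problem is rotationally invariant, and by strict convexity (Jensen applied to the $SO(d)$-average of $u^*$, which decreases both terms) the optimal $f$ is constant on $\mathbb{S}^{d-1}$. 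This reduces to the one-parameter minimization from the upper bound, whose optimum is $f\equiv A_*$ and $u^*=A_*U$, matching the upper bound. Equality identifies the limit uniquely, independent of the subsequence.

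Finally I would upgrade weak convergence $u^\eps\to A_*U$ to uniform convergence on compact subsets of $\mathbb{R}^d\setminus\mathbb{S}^{d-1}$ using interior Hölder regularity for $p$-harmonic functions applied to $u^\eps$ on any compact set bounded away from $\mathbb{S}^{d-1}$ (where $u^\eps$ is eventually $p$-harmonic). The boundary cases fit the same framework: for $\tau=0$ the sub-additivity bound \eqref{eq:neweq1} already forces ${\rm cap}_p(\Gamma_\eps)\to 0$, whence $u^\eps\to 0$ by comparison; for $\tau=\infty$, the $A=1$ ansatz combined with the monotone bound \eqref{eq:mono} sandwiches both ${\rm cap}_p(\Gamma_\eps)\to{\rm cap}_p(\bar B(0,1))$ and $u^\eps\to U$. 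The principal obstacle is the near-field lower bound: extracting a clean capacitary contribution $(1-f(s))^p(\alpha\eps)^{d-p}{\rm cap}_p(K)$ without a priori pointwise control of $u^\eps$ near individual anchors requires a careful dyadic/averaged cutoff together with a $p$-Poincar\'e inequality on annuli surrounding each cavity.
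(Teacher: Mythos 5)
Your overall plan (construct an explicit ansatz $u_A^\eps$, compute its limiting energy, then obtain a matching lower bound for $E(u^\eps)$ and minimize in $A$) is the same framework the paper uses, and the upper-bound half is essentially identical to Section 2.3 of the paper. Two small technical remarks on the ansatz itself: as written, it is not admissible, because on the part of the cavity $s+\alpha\eps K_s$ lying outside the unit ball you have $u_A^\eps = AU + (1-A) < 1$; the paper fixes this by using $U_{1+\eps}(x)=U(x/(1+\eps))$, which is identically $1$ on $B(0,1+\eps)\supset\Gamma_\eps$, rather than $U$ itself. Also your $w_s$ equals the paper's $V^s_{c/\alpha}((\cdot-s)/(\alpha\eps))$ after rescaling, so the energy computation goes through once the admissibility issue is patched; the error $\mathrm{cap}_p(\alpha\eps K_s, B(s,c\eps)) - \mathrm{cap}_p(\alpha\eps K_s)$ is controlled exactly by the paper's Lemma \ref{limcaprate}.

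The genuine gap is in the near-field lower bound, which you yourself flag as the principal obstacle. You want to show that $u^\eps \approx f(s)$ on $\partial B(s,c\eps)$ and extract a contribution $(1-f(s))^p(\alpha\eps)^{d-p}\mathrm{cap}_p(K) - o(1)$ per anchor, with the $o(1)$ errors summable. A Poincar\'e argument on an intermediate annulus only works when the local energy near $s$ is of the expected order $\sim(\alpha\eps)^{d-p}$; it says nothing about anchors where energy happens to concentrate (pointwise there is no a priori bound, only the global budget $E(u^\eps)=O(1)$). The paper resolves exactly this via a good/bad anchor dichotomy and a Besicovitch covering argument (Lemmas \ref{l:2} and \ref{l:3}): a bad anchor is one where the energy in some spherical cone $\Lambda_\zeta(s)$ exceeds $\zeta^{d-1-\beta}$, and Besicovitch gives a maximal-function-type bound showing that the fraction of bad anchors in any cap is $O(\delta^\beta)$, hence negligible for the energy sum. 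Without a quantitative count of this sort your ``chosen so that the gradient is small there on average'' is not a proof. There is also a secondary gap: because the $p$-Laplacian is nonlinear, a weak $W^{1,p}_{\rm loc}$ limit of $p$-harmonic functions is not automatically $p$-harmonic; you need either local uniform $C^{0,\alpha}$ bounds plus a minimality-passing argument, or strong gradient convergence, to identify $u^*$.

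On the difference in route: you go through a subsequential weak limit $u^*$, its boundary trace $f$ on $\mathbb{S}^{d-1}$, a liminf inequality split into far/near fields, and an $SO(d)$-averaging/Jensen argument to force $f$ constant. The paper never passes to a limit object. Instead it works directly with $u=u^\eps$ on spherical shells: Lemma \ref{l:4} shows that wherever the cone energy $E(u,\Lambda_{2\delta}(y))$ is not too large, $u$ is within $O(\delta^\beta)$ of a single constant $A(y,\eps)$ on $\Lambda_\delta(y)\cap\partial B(0,1\pm\delta)$ and, crucially, $E(u,\Lambda_\delta(y))\ge(1-2\delta^{\beta/3})E(u_A,\Lambda_\delta(y))$; Lemma \ref{l:5} then combines this lower bound, averaged over cones via Fubini, with the global inequality $E(u)\le E(u_{A_*})$ to force $A(y,\eps)\to A_*$, and the comparison principle finishes. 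This more hands-on route is precisely what enables the $L^\infty$ estimate of Theorem \ref{main2}, which a pure $\Gamma$-convergence/compactness argument would not yield. Your plan could plausibly prove Theorem \ref{t:1} alone if you supplied a Besicovitch-type bad-anchor estimate, but as stated it is incomplete at the crucial step.
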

It will be convenient to define $A_*=1$ if $\tau=\infty$. If we assume  that $\alpha=C\eps^\zeta$ for  $\zeta>0$, then this theorem reveals a phase transition when the exponent $\zeta$ crosses   $1/\gamma$.

\medskip

\noindent{\bf Example}. To illustrate the Theorem, consider the simple special case where $d=3,p=2$ (so $\gamma=1$ and $\alpha_c=\eps$) and $K_s=\bar{B}(0,1)$ for all $s$.
Suppose that $\alpha=\tau\alpha_c$.
In this case, $U(x)=\min\{1, |x|^{-1}\}$ and $u^\eps(x)$ can be interpreted as the probability that a Brownian motion started from $x$ ever hits $\Gamma=\cup_{s \in S} B(s,\tau\eps^2)$.
The expression for $A_*$ simplifies to
 $A_*=\sigma \tau/(1+\sigma \tau)$.

 \medskip

\begin{figure}[h]
\centering \includegraphics[width=3.7in]{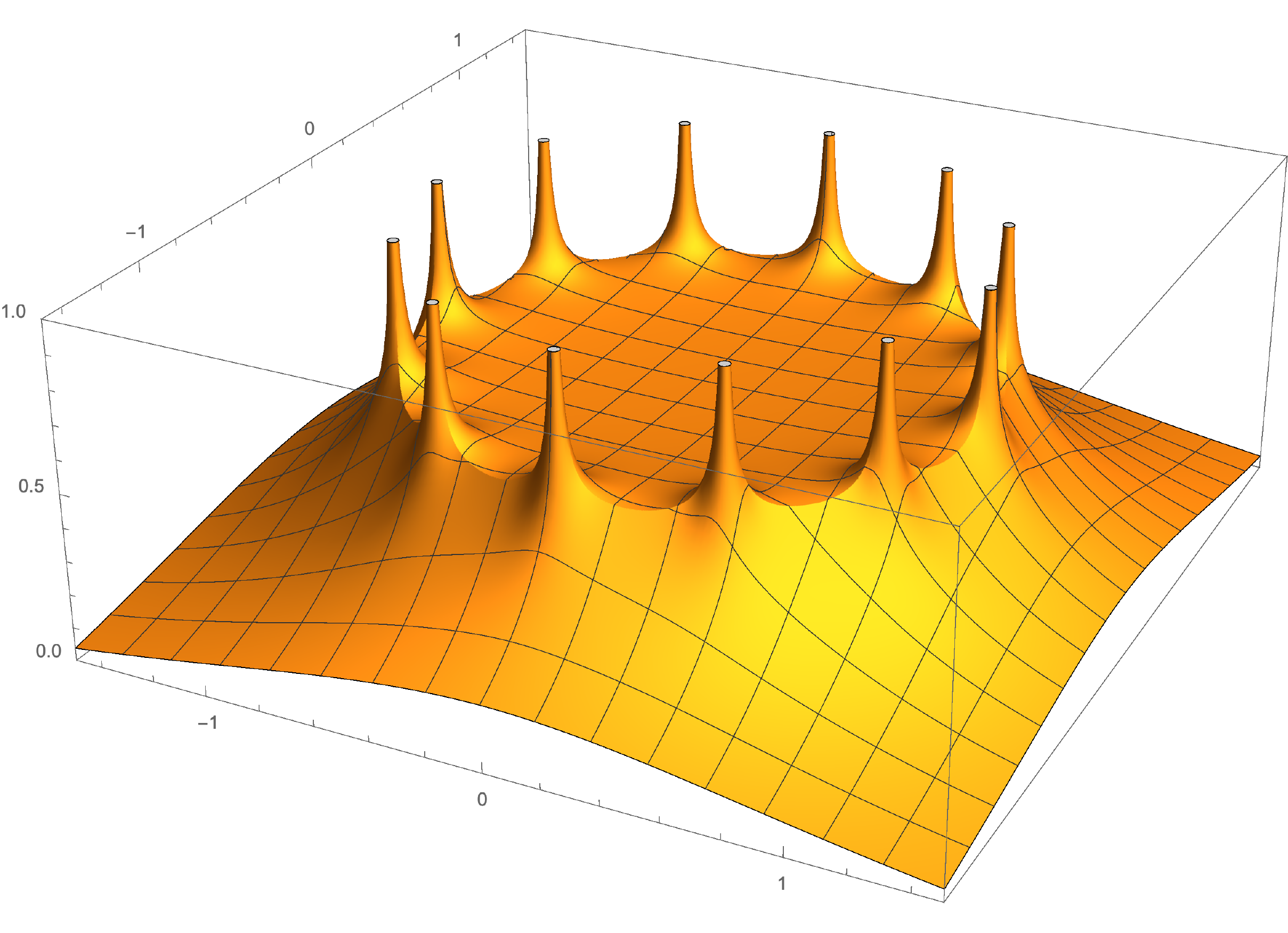} \qquad \includegraphics[width=2.5in]{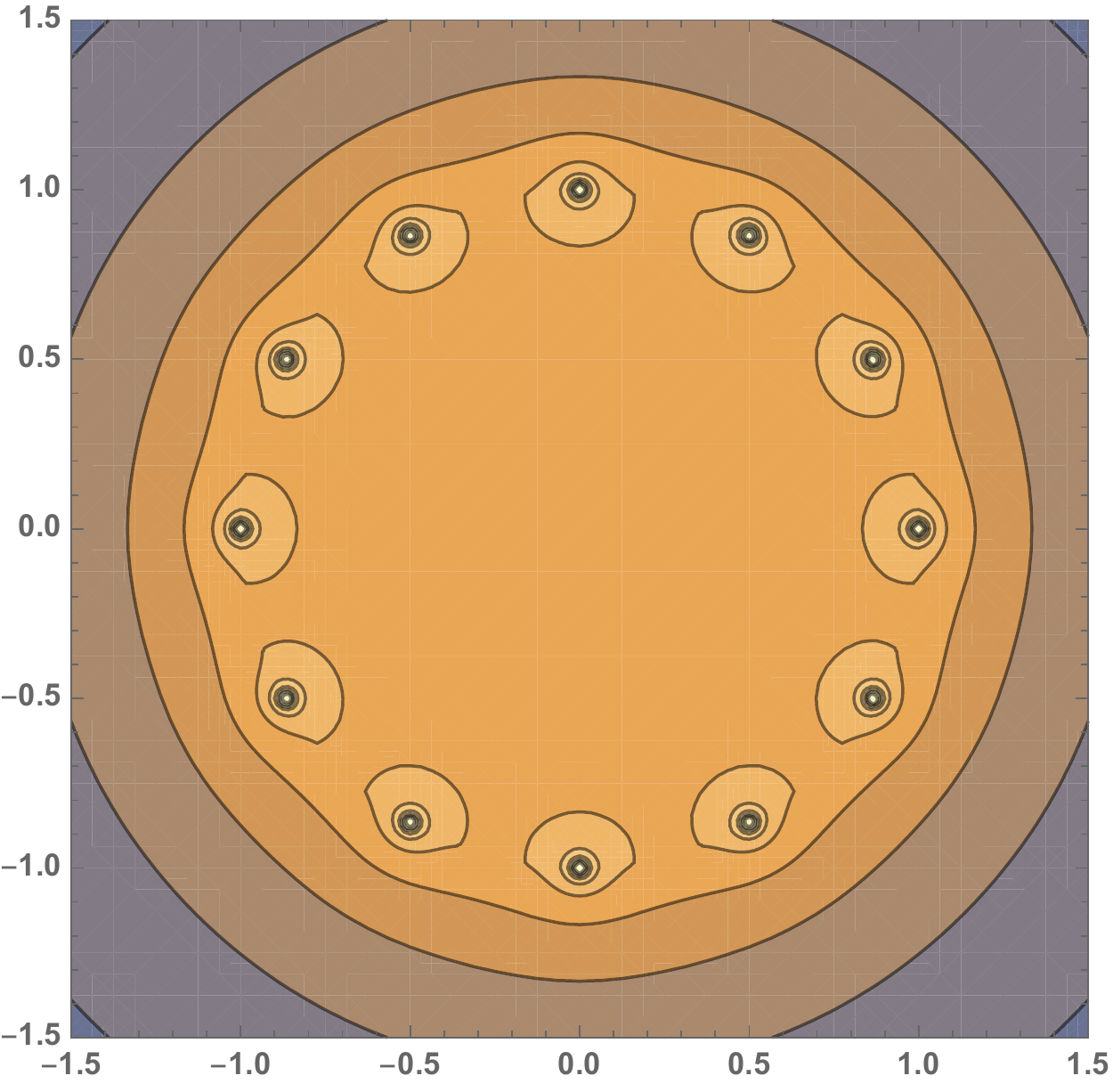}
\caption{The ansatz function $u_{A_*}$ and its level sets, determined by  $12$ equally spaced  anchors on the unit circle  ($d=2$).  Here $K=\bar B(0,1),$  $~p=1.5, ~\gamma=1, ~  \eps\approx 0.52, ~ \tau=1/40$ and  $A_* \approx 0.5$. }
\label{fig:ansatz}
\end{figure}

\begin{figure}[h]
\centering \includegraphics[width=2.15in]{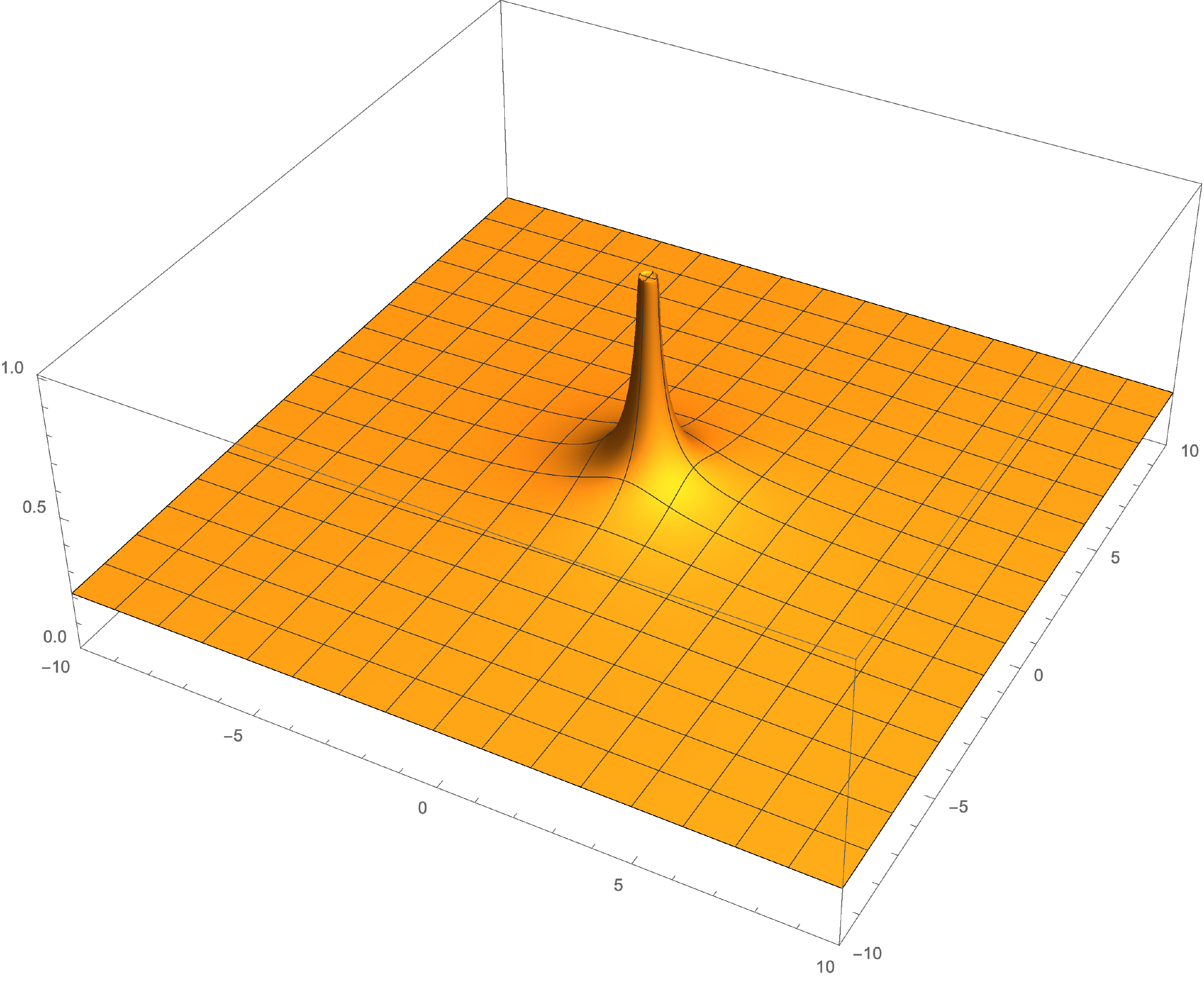}
\centering \includegraphics[width=2.15in]{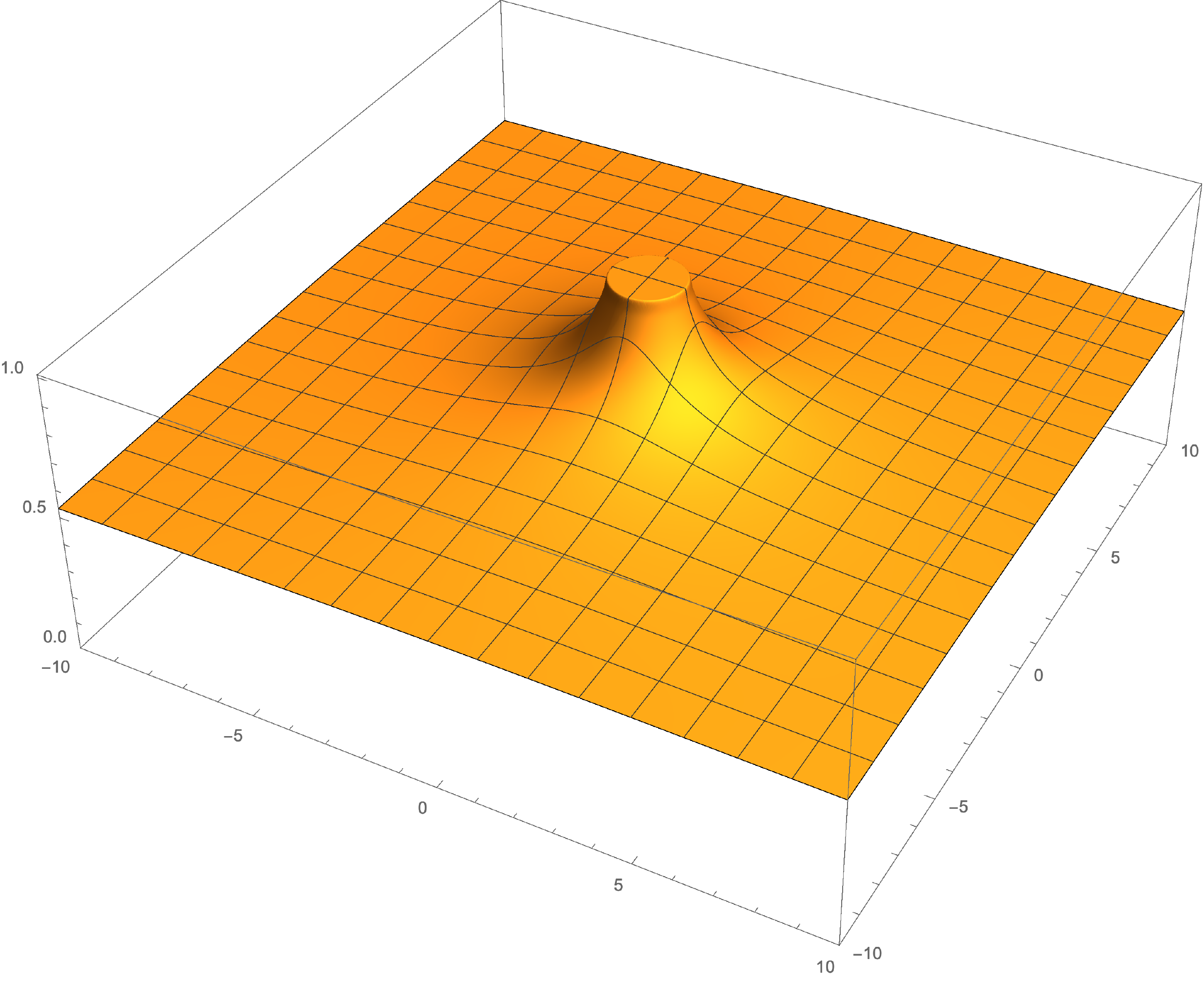}
\centering \includegraphics[width=2.15in]{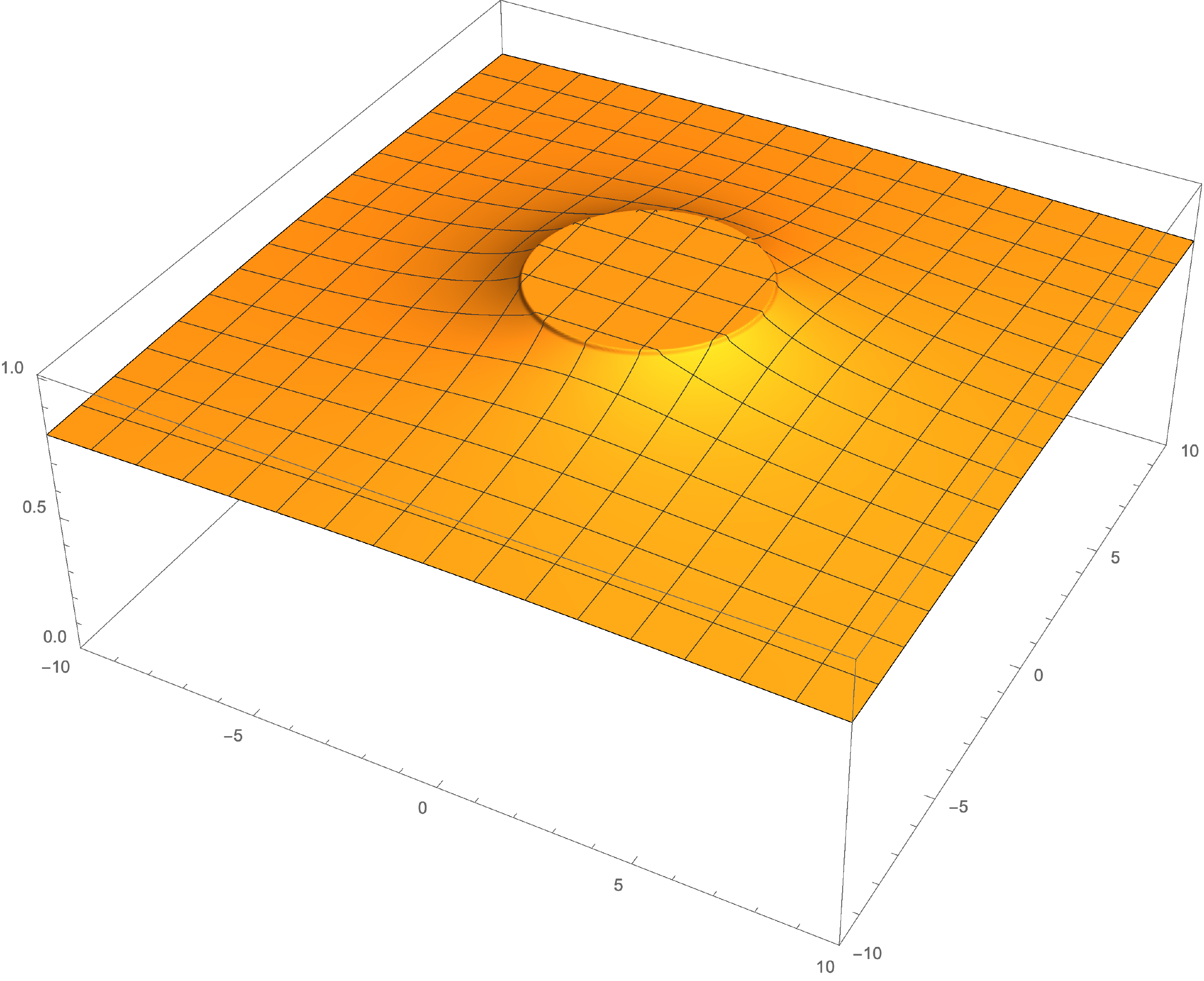}
\caption{The phase transition as $\tau$ increases: A single bump for $\tau=1/4$  ($A_* \approx 0.25$), $\tau=1$  ($A_* \approx 0.5$),  and $\tau=3$ ($A_*\approx 0.75$). Parameters:  $p=1.5,$ $ \gamma=1,$  $\eps \approx 5 \cdot 10^{-3}.$ The number of anchors is $1250$, only a square of side-length $20 \eps \alpha_c$ centered at one of them is depicted.  }
\label{fig:bump}
\end{figure}

For $\tau\in(0,\infty)$, the expression \eqref{eq:thm1A}  suggests that ${\rm cap}_p(\Gamma_{\eps})$  should approach    $A_*^p {\rm cap}_p(\bar B(0,1))$  as $\eps \to 0$.
This, however,  contradicts   the  actual limiting value of the capacity given by \eqref{eq:thm1B}.  Indeed, the limiting equilibrium potential only captures  the first summand in   \eqref{eq:thm1B},
 which accounts for the contribution from  the bulk. The second term in \eqref{eq:thm1B}, which accounts for the contribution of
  the equilibrium potential near the cavities,   completely disappears when looking at \eqref{eq:thm1A} alone.
 This  is similar to the ``term coming from nowhere'' discussed in \cite{CM97}.

The preceding observation raises the question:  How does the solution of \eqref{eq:2} behave  near the unit sphere?
To answer this question, we will introduce  the $p$-potential of $K_s$ in a ball $B(0,R)$, namely, the Perron solution $V_R^s:{\mathbb R}^d \to [0,1]$ of
\begin{eqnarray}\label{eq:V}
\left\{
\begin{array}{lll}
\Delta_p V_R^s=0 &\mbox{in} & B(0,R)\setminus  K_s,\\
V_R^s=1 & \mbox{on} & K_s,\\
V_R^s(x)= 0 & \mbox{for} & |x| \ge R.
\end{array}
\right.
\end{eqnarray}
We also define $V_\infty^s$ by replacing the last requirement in \eqref{eq:V} by $V_\infty^s(x)\to 0$ as $|x| \to \infty$. With $U$ defined by \eqref{eq:solU} and
$U_{1+\eps}(x):=U\Bigl( \frac{x}{1+\eps} \Bigr)$, we define, for each $A\in [0,1]$, the ansatz function
\begin{eqnarray}\label{eq:2da3}
u_{A}(x) := A U_{1+\eps}(x) +(1-A) \sum_{s\in S}V_{\frac{1}{10\alpha}}^s  \left(\frac{x-s}{\alpha \eps}\right)\,.
\end{eqnarray}

For $\tau<\infty$, the   convergence  $u^\eps(x) \to A_*U(x)$ as $\eps \to 0$ in Theorem \ref{t:1} does not hold uniformly in $\R^d \setminus {\mathbb S}^{d-1} $ because $\sup_{x \in  \R^d \setminus {\mathbb S}^{d-1}} u^\eps(x) =1$. The following theorem states that $\nabla u_{A_*}$ approximates $\nabla u$  in $L^p(\R^d)$ and  $u_{A_*}$ approximates $u$  in $L^\infty(\R^d)$  as $\eps \to 0$.
See Figures \ref{fig:ansatz} and \ref{fig:bump} for a depiction of $u_{A_*}$ and of how it changes when $\tau$ increases.
\begin{theorem} \label{main2}
  Suppose that hypotheses ${\rm (H_1),(H_2),(H_3)}$ hold. Then as $\eps\to 0$, we have
\begin{eqnarray}
\Vert \nabla u-\nabla u_{A_*}\Vert_{{L^p(\mathbb{R}^d)}}\to 0,
\end{eqnarray}
and
\begin{eqnarray}
\Vert u-u_{A_*}\Vert_{L^{\infty}(\mathbb{R}^d)}\to 0 \,.
\end{eqnarray}
\end{theorem}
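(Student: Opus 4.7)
My plan is three-fold: compute the $p$-Dirichlet energy of the ansatz $u_{A_*}$ and show it matches the capacity limit from Theorem~\ref{t:1}; deduce the $L^p$ gradient convergence via uniform convexity; and finally upgrade to $L^\infty$ by combining Sobolev embedding with uniform H\"older regularity of $p$-harmonic functions.

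\emph{Energy of the ansatz.} First I would verify that $u_{A_*}$ is admissible in the $p$-capacitary variational problem. On each cavity $s + \alpha\eps K_s$, the estimate $|x| \le 1 + \alpha\eps < 1 + \eps$ forces $U_{1+\eps}(x) = 1$, while $V^s_{1/(10\alpha)}((x-s)/(\alpha\eps)) = 1$ on $K_s$; hence $u_{A_*} \equiv 1$ on $\Gamma$, and decay at infinity is clear. Since anchors are $\eps$-separated and $\alpha \le 1/80$, the balls $B(s, \eps/10)$ (which contain the supports of the scaled $V^s$ pieces) are pairwise disjoint, and $U_{1+\eps}$ is constant with zero gradient on each of them. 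This decomposes
\[
\int |\nabla u_{A_*}|^p = A_*^p \!\int_{\R^d \setminus \bigcup_s B(s,\eps/10)} |\nabla U_{1+\eps}|^p + (1-A_*)^p \sum_{s\in S} (\alpha\eps)^{d-p}\, {\rm cap}_p\bigl(K_s; B(0,1/(10\alpha))\bigr).
\]
The first term tends to $A_*^p\, {\rm cap}_p(\bar B(0,1))$ because removing vanishingly small balls is negligible for the energy of the ball's equilibrium potential. The second equals $(1-A_*)^p |S|(\alpha\eps)^{d-p}\, {\rm cap}_p(K; B(0,1/(10\alpha)))$ by (H2); using (H1), the identity $\alpha^{d-p}\eps^{1-p} = (\alpha/\alpha_c)^{d-p} \to \tau^{d-p}$ from (H3), and ${\rm cap}_p(K; B(0,R)) \to {\rm cap}_p(K)$ as $R\to\infty$, it tends to $(1-A_*)^p \sigma \tau^{d-p}\, {\rm cap}_p(K)$. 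Comparing with \eqref{eq:thm1B} then yields $\int |\nabla u_{A_*}|^p \to \lim_\eps \int |\nabla u^\eps|^p$.

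\emph{$L^p$ convergence of gradients.} Since $u^\eps$ and $u_{A_*}$ both equal $1$ on $\Gamma$ and decay at infinity, so does their average, making $(u^\eps + u_{A_*})/2$ admissible; hence $\|\nabla(u^\eps+u_{A_*})/2\|_p^p \ge {\rm cap}_p(\Gamma_\eps) = \|\nabla u^\eps\|_p^p$. For $p \ge 2$, Clarkson's inequality combined with this lower bound gives
\[
\bigl\|\tfrac{1}{2}(\nabla u^\eps - \nabla u_{A_*})\bigr\|_p^p \;\le\; \tfrac{1}{2}\bigl(\|\nabla u_{A_*}\|_p^p - \|\nabla u^\eps\|_p^p\bigr) \;\to\; 0,
\]
and the dual Clarkson inequality (or Hanner's inequality) yields the analogous conclusion for $1 < p < 2$ by uniform convexity of $L^p$.

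\emph{$L^\infty$ convergence.} The homogeneous Sobolev embedding $\dot W^{1,p}(\R^d) \hookrightarrow L^{p^*}(\R^d)$ with $p^* = dp/(d-p)$, valid since $p<d$, applied to the previous step gives $\|u^\eps - u_{A_*}\|_{L^{p^*}} \to 0$. To interpolate this with the trivial bound $\|u^\eps - u_{A_*}\|_\infty \le 2$, I would invoke the standard $C^{0,\beta}$ regularity for bounded $p$-harmonic functions: $u^\eps$ is uniformly H\"older on balls bounded away from $\Gamma$, and after the rescaling $x = s + \alpha\eps y$ it is uniformly H\"older on fixed balls outside the rescaled cavity $K_s$; the ansatz $u_{A_*}$ enjoys the same estimates by explicit construction. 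A standard interpolation $\|f\|_\infty \lesssim \|f\|_{L^{p^*}}^\theta [f]_{C^{0,\beta}}^{1-\theta}$ then delivers the desired $L^\infty$ convergence on each scale. The main obstacle is organizing these H\"older bounds into a single uniform modulus across the macroscopic ($O(1)$) and microscopic ($\alpha\eps$) scales; the way I expect to handle this is to identify the blow-up limit of $u^\eps$ around each anchor as $A_* + (1-A_*) V^s_\infty$ via a rescaling--compactness--uniqueness argument (using DiBenedetto regularity, boundedness of $u^\eps$, and uniqueness of the $p$-capacitary potential with prescribed value at infinity), so that $u^\eps$ and $u_{A_*}$ provably track each other locally after rescaling.
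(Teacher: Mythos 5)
Your treatment of the $L^p$ gradient part is essentially the paper's argument (compute $E(u_{A_*})\to\varphi_\tau(A_*)$, note $E(u)\le E(u_{A_*})$ by admissibility of the ansatz, conclude $E(u_{A_*})-E(u)\to 0$, then Clarkson); the only difference is that you invoke the capacity limit \eqref{eq:thm1B} from Theorem~\ref{t:1} as a black box, whereas the paper actually derives that capacity limit \emph{in the course of} proving Theorem~\ref{main2} via the cone-energy averaging in Lemma~\ref{l:5}. This is a minor bookkeeping issue and the logic is not circular.

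The $L^\infty$ part, however, has a genuine gap, and it is precisely the part the paper's Roadmap flags as the main difficulty. Your interpolation $\|f\|_\infty\lesssim\|f\|_{L^{p^*}}^\theta[f]_{C^{0,\beta}}^{1-\theta}$ is scale-invariant, so to apply it at scale $\alpha\eps$ near an anchor $s$ you must rescale $y=(x-s)/(\alpha\eps)$ and control $\|\tilde u-\tilde u_{A_*}\|_{L^{p^*}(B(0,2))}$ and $[\tilde u-\tilde u_{A_*}]_{C^{0,\beta}(B(0,2))}$ for the rescaled functions. The second may be bounded by DiBenedetto, but the first is \emph{not} controlled by the global bound $\|u-u_{A_*}\|_{L^{p^*}(\R^d)}\to 0$: a single anchor at which $u-u_{A_*}$ is of order $1$ on a set of diameter $\sim\alpha\eps$ contributes only $O((\alpha\eps)^d)$ to the global $L^{p^*}$ norm to the $p^*$-th power, which vanishes, while the rescaled local $L^{p^*}$ norm stays of order $1$. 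So a global $L^{p^*}$ smallness statement is consistent with a persistent $O(1)$ discrepancy at some exceptional anchor, and your interpolation cannot rule this out. The compactness-plus-uniqueness blow-up you propose as a fallback also does not close this, because compactness yields only subsequential, qualitative convergence and in particular gives no uniformity over the $\sim\eps^{1-d}$ anchors; identifying the blow-up limit requires knowing that $u^\eps$ is close to $A_*$ already on $\partial B(s,\eps/10)$, which is exactly the statement you are trying to prove. Further, Remark~\ref{tech} notes that $u$ need not even be continuous up to $\partial\Gamma$ (no regularity is assumed on $K_s$), so any argument relying on H\"older bounds of $u$ all the way to the cavity boundary is not available; the paper works around this by comparing Perron upper-class functions rather than traces of $u$.

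What is actually missing from your proposal is the separation/neck estimate (Theorem~\ref{neckthm} in the paper): a quantitative comparison-principle argument, built from the explicit radial $p$-harmonic barrier $h_{r,R}$ and an auxiliary cored-half-ball problem, showing that the Perron solution $w$ with $w=1$ on $\cup_s\partial B(s,\alpha\eps)$ and $w=0$ on $\partial B(0,1+\delta)$ satisfies $\sup_{\Omega_{1/10}}w\le C\tau_1^\gamma\delta$. Combined with the bulk estimate \eqref{bulk1} from Theorem~\ref{t:1}, this pins $u$ within $O(\delta^\theta)$ of $A_*$ on every sphere $\partial B(s,\eps/10)$; the remaining control inside each $D_s=B(s,\eps/10)\setminus(s+\alpha\eps K_s)$ is then obtained by comparing Perron upper classes for $u$ and for $u_{A_*}$ directly (avoiding boundary regularity of $u$ on $\Gamma$). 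Without this barrier-type argument, the $L^\infty$ convergence does not follow.
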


\smallskip

\begin{remark} \label{tech}
Several technical challenges arise because there are no regularity assumptions on the compact sets $K_s$. First, the solution $u$ of  \eqref{eq:2} depends monotonically on $\alpha(\eps)$ only if $K_s$ are star-shaped. Moreover, in general $u$ need not be continuous on the boundary of $\Gamma$.
\end{remark}

\medskip
\noindent{\bf Related work.}
There is an extensive literature devoted to the $p$-Laplacian and nonlinear potential theory, see, e.g., the books   \cites{Mazya,Lindq,hkm,Adams-Hedberg} and the review paper \cite{pLrev}.
 Chapter 3 of \cite{MKhomo} discusses  homogenization problems for the $p$-Laplacian.
Most works on this topic   focus
 on the derivation of effective limiting equations in  domains that are perforated in the bulk; see, e.g., \cite{MKhomo,CM97}
 and references therein.  In the papers  \cites{KS14,KS16,GPPS}, which are most closely related to our study,  the authors considered homogenization
 for the $p$-Laplacian in domains where  the perforation  takes place only near a $(d-1)$ dimensional surface. In \cite{KS14} the authors assume that
  $p<1+d/2$ and  the cavities are obtained by intersecting a periodic structure with a hyperplane; the critical scaling $\alpha_c$ was already identified there
   under a mild regularity assumption on the cavities. In \cite{KS16}, the hyperplane is replaced by a convex surface and $p<1+d/4$. In \cite{GPPS} the 
   cavities are obtained by intersecting a periodic collection of small balls with the $\eps$-neighborhood of a smooth surface. The latter paper also contains a detailed review of earlier literature.
 In the works mentioned above, the authors determined the asymptotics of the $p$-capacity of the obstacle, and  the limiting behavior of the solution 
 in Sobolev space in the weak topology;  the behavior of the solution  near the perforated surface was not described. That is our main goal here.

\medskip

\noindent{\bf Roadmap.} While it is not so hard to show that the solution $u^\eps$ is generally well-behaved away from the cavities $s+\alpha\eps K_s$, with rare regions of high local energy, the main obstacle to obtaining our results (especially the $L^\infty$ estimates in Theorem  \ref{main2}), is ruling out such exceptional regions.

The rest of the paper is organized as follows. Section 2 presents some necessary background and preliminaries on $p$-potentials. In Section \ref{heart}, we bound the oscillation of $u^\eps$ in cones and, using the Besicovich covering lemma, obtain a lower bound for
 its energy in cones. In Section \ref{proofthm1},  we combine this lower bound with the global energy minimization property of $u^\eps$,
to infer a tight upper bound for its energy in cones; we  then use this  to deduce that $u^\eps$ is approximately constant on $\partial B(0,1+\delta)$ (where $\eps\ll \delta \ll1$), and prove Theorem \ref{t:1}.   Section \ref{necksec} shows that if $u^\eps$ is approximately constant on the boundary of a ball  $\partial B(0,1+\delta)$, then it is also approximately constant inside this ball, with the exception of small balls $B(s,\eps/10)$ around the anchors.
The comparison argument in that section is purely analytic, but is motivated by the connection with noisy tug of war games described in \cite{PS}. In the final section,  we combine the results from the preceding sections and derive Theorem   \ref{main2}.

\section{Preliminaries}

\subsection{Notation}
Given an open set $\Omega\subset \mathbb{R}^d$,  let $H^{1,p}(\Omega)$ denote the   Sobolev space of $L^p(\Omega)$ functions with weak gradient in $L^p(\Omega)$, and let
 $H^{1,p}_0(\Omega)$ denote the closure in $H^{1,p}(\Omega)$ of   $C_0^\infty(\Omega)$.
Write  $H_{loc}^{1,p}(\Omega)$ for the space of functions on $\Omega$ which, when restricted to every  subdomain $\Omega_1$ compactly contained in $\Omega$, are in  $H^{1,p}(\Omega_1)$.
\begin{definition}\cite[Chapters 6,7]{hkm}
Given an open set $\Omega\subset \mathbb{R}^d$, a continuous function $w\in H_{loc}^{1,p}(\Omega)$ is called $p$-{\bf harmonic} if $\Delta_p w=0$  weakly, i.e., if
$$ \int_\Omega |\nabla w|^{p-2} \nabla w  \cdot \nabla \eta=0 \,, \quad \forall \eta \in C_0^\infty(\Omega)\,. $$
A function $\psi:\Omega \to (-\infty,\infty] $ is $p$-{\bf superharmonic} if it is lower semi-continuous, not identically $\infty$ on any connected component of $\Omega$,
and satisfies the following comparison inequality on compactly contained subdomains $D \subset \Omega$: If $\psi \ge w$ on $\partial D$ and $w\in C(\bar{D})$ is $p$-harmonic in $D$, then $ \psi \ge w$ in $D$.
\end{definition}
The following principle, a special case of \cite[Proposition 7.6]{hkm}, will be used several times.
\begin{lemma}[Comparison principle]\label{l:comp}
Let $v,w$  be bounded $p$-harmonic
  functions in an open set $\Omega \subset \R^d$.
 If
 $$\limsup_{x \to y} v(x) \le \liminf_{x \to y} w(x)$$
 for all $y \in \partial \Omega$
 (including $y=\infty$ if $\Omega$ is unbounded)
  then $v \le w$  in $\Omega$.
  \end{lemma}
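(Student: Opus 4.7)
The plan is the classical monotonicity test-function argument specialized to our setting, with $\phi := (v - w - \epsilon)_+$ playing the role of an admissible test function. It suffices to show that $v \le w + \epsilon$ on $\Omega$ for every fixed $\epsilon > 0$, since sending $\epsilon \downarrow 0$ then gives the claim.

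First, I would use the boundary hypothesis to localize. Since $v, w$ are bounded and continuous (being $p$-harmonic), the boundary condition $\limsup_{x\to y} v \le \liminf_{x\to y} w$ at each $y \in \partial\Omega$, together with the convention that $y = \infty$ is included when $\Omega$ is unbounded, gives open neighborhoods $U_y$ with $v - w < \epsilon$ on $U_y \cap \Omega$. A standard compactness argument then shows that the open set $E_\epsilon := \{x \in \Omega : v(x) - w(x) > \epsilon\}$ has its closure (in $\R^d$) contained in $\Omega$ and bounded; in particular $\phi \in H^{1,p}_0(\Omega)$, with $\nabla \phi = (\nabla v - \nabla w) \mathbf{1}_{E_\epsilon}$ a.e.

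Next, I would plug $\eta = \phi$ into the weak $p$-harmonic equations for $v$ and $w$ (using density of $C_0^\infty(\Omega)$ in $H_0^{1,p}(\Omega)$) and subtract, obtaining
\begin{equation}\label{eq:subtract}
\int_{E_\epsilon} \bigl(|\nabla v|^{p-2}\nabla v - |\nabla w|^{p-2}\nabla w\bigr) \cdot (\nabla v - \nabla w) \, dx = 0.
\end{equation}
The standard monotonicity inequality for the map $\xi \mapsto |\xi|^{p-2}\xi$ on $\R^d$ shows that the integrand in \eqref{eq:subtract} is pointwise nonnegative and vanishes only when $\nabla v = \nabla w$. Hence $\nabla v = \nabla w$ a.e.\ on $E_\epsilon$, so $\nabla \phi = 0$ a.e.\ on $\Omega$. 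Since $\phi \in H^{1,p}_0(\Omega)$ and is nonnegative, the Poincar\'e inequality on a large ball containing $\overline{E_\epsilon}$ (or simply the fact that a $W^{1,p}_0$ function with zero gradient is zero) forces $\phi \equiv 0$, i.e.\ $v \le w + \epsilon$ throughout $\Omega$.

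The main obstacle is the first step: verifying that $\phi$ actually lies in $H^{1,p}_0(\Omega)$, since no regularity of $\partial \Omega$ is assumed. I would handle this by approximating $\phi$ by $\min(\phi, k)$ and truncating outside large balls (for the unbounded case), exploiting that the closure of $\{v - w > \epsilon/2\}$ is a compact subset of $\Omega$ by the boundary hypothesis; then the standard fact that a nonnegative function in $H^{1,p}_{loc}(\Omega)$ with compactly supported-in-$\Omega$ essential support belongs to $H^{1,p}_0(\Omega)$ (cf.\ the truncation lemmas in \cite{hkm}) closes the gap. The monotonicity of $|\xi|^{p-2}\xi$ and the density of test functions are standard and require no further comment.
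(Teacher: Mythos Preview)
Your argument is correct and is essentially the standard proof of the comparison principle via the monotonicity of $\xi\mapsto|\xi|^{p-2}\xi$; the localization step (showing $\overline{E_\epsilon}$ is compact in $\Omega$, so $\phi\in H^{1,p}_0(\Omega)$) is handled properly. The paper, however, does not give a proof at all: it simply records the lemma as a special case of \cite[Proposition~7.6]{hkm}. So your route is a self-contained argument in place of a citation; it buys independence from the reference at the cost of a few lines, while the paper's approach keeps the preliminaries minimal by deferring to the standard monograph.
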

Next we give the definition of {\bf Perron solutions} from \cite[Chapter 9]{hkm}.
\begin{definition} \label{def:upper}
 Given a domain $\Omega \subset \mathbb{R}^d$ and boundary values $f: M \to \mathbb{R}$, where $\partial \Omega \subset M \subset \Omega^c$,
 the {\em upper class} $U_f^\Omega$ of $f$ consists of $p$-superharmonic functions $\psi:\Omega \to (-\infty,\infty] $ that are bounded below  and satisfy
\begin{eqnarray}\label{upperclass}
\liminf_{x \to y} \psi(x) \ge f(y), \quad \forall y \in \partial \Omega.
\end{eqnarray}
(Recall that if $\Omega$ is unbounded, then we include $\infty$ in $\partial \Omega$.)
The {\em upper Perron solution} $$h=\bar{H}_f^\Omega: \Omega \cup M \to [-\infty,\infty]$$ of  the boundary value problem
\begin{eqnarray}\label{eq:perron1}
\left\{
\begin{array}{lll}
\Delta_p h=0 &\mbox{in} & \Omega  \, ,\\
h(y)=f(y) & \mbox{for} & y \in M  \\
\end{array}
\right.
\end{eqnarray}
is defined in $\Omega$ by
\begin{eqnarray}\label{upperperron}
\bar{H}_f^\Omega(x):=\inf \{\psi(x) \, :\, \psi \in U_f^{\Omega} \}
\end{eqnarray}
and extended to agree with $f$ in $M$. The {\em lower Perron solution} of \eqref{eq:perron1} is defined by $\underline{H}_f^\Omega:= -\bar{H}_{-f}^\Omega$.

We say that $f$ is {\em resolutive} in $\Omega$ if the upper and lower Perron solutions of \eqref{eq:perron1} coincide; in this case we   refer to both of these simply as the  Perron solution.
\end{definition}
Theorem 9.25 in \cite{hkm} ensures that if $f$ is continuous on  $\partial \Omega$ and
 $(\mathbb{R}^d \setminus \Omega)$ contains a compact set of positive $p$-capacity, then $f$ is resolutive.

\medskip
  Following \cite{hkm}, we define the Dirichlet space
\begin{eqnarray}
L^{1,p}(\Omega):=\{ v\in H_{loc}^{1,p}(\Omega): ~ \nabla v\in L^p(\Omega)\},
\end{eqnarray}
and let $L_0^{1,p} (\Omega)$ be the  closure of $C_0^{\infty}(\Omega)$  in $L^{1,p}(\Omega)$ with respect to the semi-norm
$\left(\int_{\Omega} \vert \nabla v \vert^p\right)^\frac1p$, see \cite[Chapter 1.9, p. 13]{hkm}.

\begin{definition}
For  a function $v\in L^{1,p}(\Omega)$, we define its energy in a Borel set $B \subset \Omega$ by
\begin{eqnarray}
E(v,B) =\int_{B } \vert \nabla v \vert^p \,.
\end{eqnarray}
We write $E(v, {\mathbb R}^d)$ simply as $E(v)$.
\end{definition}

We now define the $p$-capacity (with respect to a domain $\Omega$) of a compact set $K \subset \Omega$ by
\begin{eqnarray} \label{defcap0}
{\rm cap}_p(K,\Omega):=\inf\{ E(\psi,\Omega), ~ \psi\ge 1 ~ \mbox{on} ~ K, ~ \psi\in C_0^{\infty}(\Omega) \}.
\end{eqnarray}
This definition readily implies that
\begin{eqnarray} \label{limcap}
{\rm cap}_p(K,B(0,R)) \downarrow {\rm cap}_p(K) \; {\rm as} \; R \uparrow \infty \,,
\end{eqnarray}
where
 $  {\rm cap}_p(K)={\rm cap}_p(K,{\mathbb R}^d)$.
In  \cite[p.\ 27-28]{hkm} it is shown that
\begin{eqnarray} \label{defcap1}
{\rm cap}_p(K,\Omega)=\inf\{ E(\psi,\Omega), ~ \psi\ge 1 ~ \mbox{on} ~ K, ~ \psi\in H_0^{1,p}(\Omega) \cap C(\Omega) \}.
\end{eqnarray}
Fix   $\phi \in C_0^{\infty}(\Omega)$ such that $\phi = 1$ on $K$, and define the admissible class
  \begin{eqnarray} \label{eq:bala-1}
{\mathcal A}(K, \Omega,\phi):= \{ \psi \in L_0^{1,p}(\Omega) ~\mbox{\rm such that}~  \psi-\phi\in L_0^{1,p}(\Omega\setminus K) \} \,.
\end{eqnarray}
Then we claim that
\begin{eqnarray} \label{eq:bala0}
{\rm cap}_p(K,\Omega) \le  E(\psi,\Omega) \;\;\mbox{\rm for}  \;  \psi \in  {\mathcal A}(K, \Omega,\phi)\,.
\end{eqnarray}
Indeed, suppose that $\psi \in L_0^{1,p}(\Omega)$ and $h_n \in C_0^\infty(\Omega \setminus K)$ satisfy $\nabla(\psi-\phi-h_n) \to 0$  in $L^p(\Omega \setminus K)$. Then   $\nabla(\phi+h_n)=\nabla\phi=0$ a.e.\ in $K$.  Therefore,
\eqref{defcap1} implies that $${\rm cap}_p(K,\Omega) \le E(\phi+h_n,  \Omega)=  E(\phi+h_n , \Omega \setminus K) \to E(\psi , \Omega \setminus K)   \,$$
as $n \to \infty$.

\smallskip

Next, let $\psi^K$ be  the Perron solution of
\begin{eqnarray}\label{eq:psi^K}
\left\{
\begin{array}{lll}
\Delta_p \psi^K=0 &\mbox{in} & \Omega\setminus  K \, ,\\
\psi^K=1 & \mbox{on} & K\, ,\\
\psi^K= 0 & \mbox{on} & \partial \Omega \, ,
\end{array}
\right.
\end{eqnarray}
where by convention, $\infty \in\partial \Omega$ if $\Omega$ is unbounded.
By \cite[Theorem 9.33]{hkm},   $\psi^K \in  {\mathcal A}(K, \Omega,\phi)$, and it coincides with the $p$-potential of $K$ defined in \cite[Chapter 6]{hkm}. Moreover,  $\psi^K$   is continuous and  $p$-harmonic on $\Omega \setminus K$, and it satisfies $\psi^K \equiv 1$ on $K$. It follows from Corollary 1.21 in \cite{hkm} that  $\nabla \psi^K(x)=0$ for almost every $x \in K$.
   By \cite[Theorem 9.35]{hkm},
$E(\psi^K, \Omega)= {\rm cap}_p(K,\Omega)$.
This yields an alternative definition of $p$-capacity: If $\phi \in C_0^{\infty}(\Omega)$ satisfies $\phi=1$ on $K$, then
\begin{eqnarray} \label{eq:bala}
{\rm cap}_p(K,\Omega):=\min \{ E(\psi,\Omega): ~ \psi \in  {\mathcal A}(K, \Omega,\phi) \}.
\end{eqnarray}
  By the strict convexity of the $L^p$ norm, $\psi^K$ is the unique  (up to translation by a constant) minimizer of the extremal problem \eqref{eq:bala}.

Thus  the Perron solutions $u,V_R^s$ and $U$, defined in \eqref{eq:2},  \eqref{eq:V} and \eqref{eq:U}, have the following properties:
\begin{eqnarray} \label{eq:musk}
\int_{\mathbb{R}^d} \vert \nabla u \vert^p ={\rm cap}_p(\Gamma), \quad \int_{B(0,R)} \vert \nabla V_R^s \vert^p ={\rm cap}_p(K_s, B(0,R)), \quad \int_{\mathbb{R}^d} \vert \nabla U \vert^p ={\rm cap}_p(\bar{B}(0,1))=\gamma^{p-1} \omega_{d-1} \,,
\end{eqnarray}
where $\omega_{d-1}$ is the surface area of the unit ball in $\mathbb{R}^d$, and the last equality is from \cite[Section 2.11]{hkm}.

\begin{remark} \label{pedant}
 Fix   $\phi \in C_0^{\infty}(\R^d)$ such that $\phi = 1$ on $\bar{B}(0,1)$ and $\phi=0$ outside $ B(0,2)$, and suppose that $R>2$.
  Since $V_R^s$ coincides with the $p$-potential of  $K_s$ in $B(0,R)$, \cite[Lemma 8.5]{hkm} yields that $V_R^s -\phi\in H_0^{1,p}(B(0,R) \setminus K_s)$,
  which readily implies that for any domain $\Omega$ that contains $B(0,R)$, we have
   $V_R^s \in H_0^{1,p}(\Omega   \setminus K_s$) and $E(V_R^s,\Omega)=E(V_R^s, B(0,R))$.
  While the classical gradient of $V_R^s$ on $\partial B(0,R)$ need not exist, the distributional gradient may be taken to be zero on this boundary,
   and in any case, does not affect the energy.
\end{remark}

\medskip
We will need an estimate on the rate of convergence in \eqref{limcap}.
\begin{lemma} \label{limcaprate}
Suppose that $K \subset \bar{B}(0,1) \subset \R^d$ is compact. Then
\begin{eqnarray} \label{balas}  {\rm cap}_p(K) \le {\rm cap}_p(K,B(0,R))   \le \bigl(1-R^{-\gamma}\bigr)^{-p}  {\rm cap}_p(K)\,.
\end{eqnarray}
\end{lemma}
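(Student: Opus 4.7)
\smallskip

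\noindent\textbf{Proof plan.} The left inequality ${\rm cap}_p(K) \le {\rm cap}_p(K,B(0,R))$ is immediate from the definition \eqref{defcap0}: any $\psi \in C_0^\infty(B(0,R))$ with $\psi \ge 1$ on $K$ belongs to $C_0^\infty(\R^d)$, so restricting the admissible class cannot decrease the infimum. The substance of the lemma is the upper bound, which I would prove by constructing an explicit admissible test function built from the $p$-potential $\psi^K$ of $K$ in all of $\R^d$ (the unique minimizer in \eqref{eq:bala} with $\Omega=\R^d$, as discussed after \eqref{eq:psi^K}).

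First, I would establish the pointwise bound $\psi^K(x) \le U(x)$ on $\R^d$, where $U$ is given by \eqref{eq:solU}. Since $K \subset \bar B(0,1)$ and $\psi^K \le 1 = U$ on $\bar B(0,1)$ (by the maximum principle for $p$-superharmonic functions), and both $\psi^K$ and $U$ are bounded $p$-harmonic in $\R^d \setminus \bar B(0,1)$ with limit $0$ at infinity, the Comparison Principle (Lemma \ref{l:comp}) applied in $\R^d \setminus \bar B(0,1)$ yields $\psi^K(x) \le |x|^{-\gamma}$ for $|x| \ge 1$. In particular, $\psi^K \le R^{-\gamma}$ on $\R^d \setminus B(0,R)$.

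Next, define, for $R>1$,
\begin{eqnarray}
w(x) := \frac{(\psi^K(x) - R^{-\gamma})_+}{1 - R^{-\gamma}} \quad \text{in } B(0,R), \qquad w(x) := 0 \quad \text{for } |x| \ge R.
\end{eqnarray}
By the preceding bound, $\{\psi^K > R^{-\gamma}\} \subset B(0,R)$, so $w$ is continuous on $\R^d$, vanishes in a neighborhood of $\partial B(0,R)$ and at infinity, and $w \equiv 1$ on $K$ (since $\psi^K \equiv 1$ there). Standard truncation and chain-rule facts in $H^{1,p}$ (see \cite[Chapter 1]{hkm}) then give $w \in H_0^{1,p}(B(0,R))$, and in particular $w$ lies in the admissible class for ${\rm cap}_p(K,B(0,R))$ in the sense of \eqref{defcap1} (after an approximation if needed, using that $H_0^{1,p}\cap C$ density suffices). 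Also, $\nabla w = (1-R^{-\gamma})^{-1}\nabla \psi^K$ a.e.\ on $\{\psi^K > R^{-\gamma}\}$ and $\nabla w = 0$ elsewhere.

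Finally, I would estimate the energy:
\begin{eqnarray}
{\rm cap}_p(K,B(0,R)) \le E(w,B(0,R)) = \frac{1}{(1-R^{-\gamma})^p} \int_{\{\psi^K > R^{-\gamma}\}} |\nabla \psi^K|^p \le \frac{E(\psi^K)}{(1-R^{-\gamma})^p} = \frac{{\rm cap}_p(K)}{(1-R^{-\gamma})^p},
\end{eqnarray}
using \eqref{eq:musk}-style identification $E(\psi^K) = {\rm cap}_p(K)$. The only mildly delicate step I expect is the verification that $w \in H_0^{1,p}(B(0,R))$ despite the absence of any regularity of $K$ or $\partial\{\psi^K > R^{-\gamma}\}$; this is handled by approximating $(\psi^K - R^{-\gamma})_+$ by $\max(\psi^K - R^{-\gamma} - 1/n, 0)$, which vanishes on a neighborhood of $\partial B(0,R)$ and can be cut off and mollified inside $B(0,R)$ in the standard way.
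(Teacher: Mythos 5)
Your left inequality is correct and trivial, and your overall strategy — truncate a $p$-potential above level $R^{-\gamma}$, normalize, and estimate the energy — is the same idea the paper uses. However, there is a real gap in the step where you assert that $w$ is continuous and hence admissible in \eqref{defcap1}: the admissible class in \eqref{defcap1} is $H_0^{1,p}(\Omega)\cap C(\Omega)$, and the paper's Remark \ref{tech} explicitly warns that the Perron solution (i.e.\ $\psi^K$) need \emph{not} be continuous at $\partial K$ for general compact $K$. At an irregular boundary point of $K$, $\limsup_{x\to y,\,x\notin K}\psi^K(x)$ can be strictly less than $1$, so $w$ has a genuine jump there; neither the $(\cdot)_+$ truncation nor your proposed $\max(\psi^K-R^{-\gamma}-1/n,0)$ regularization removes this jump (and mollifying would destroy the constraint $w\ge 1$ on $K$, since a mollification averages in values below $1$ from outside $K$). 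So $w$ does not belong to the class used in \eqref{defcap1}, and the bound ${\rm cap}_p(K,B(0,R))\le E(w)$ is not justified as written.

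This is precisely the obstacle the paper's proof is built to circumvent. The paper replaces $K$ by its dyadic-cube hull $K_n$, for which $\R^d\setminus K_n$ is regular (corkscrew condition, \cite[Theorems 6.27, 6.31]{hkm}); the potential $\psi_n$ of $K_n$ is therefore genuinely continuous on all of $\R^d$, the truncated and normalized test function lies in $H_0^{1,p}(B(0,R))\cap C(\R^d)$, and one gets ${\rm cap}_p(K,B(0,R))\le\bigl(1-(1+\delta)^\gamma R^{-\gamma}\bigr)^{-p}{\rm cap}_p(K_n)$. Passing $n\to\infty$ via outer continuity of $p$-capacity (\cite[Theorem 2.2(iv)]{hkm}) gives ${\rm cap}_p(K_n)\to{\rm cap}_p(K)$, and then $\delta\downarrow 0$ yields the stated bound. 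Your computation of the energy of the truncated function, and your comparison $\psi^K\le U$, are both fine; what is missing is the dyadic regularization (or, alternatively, an appeal to a capacity characterization that does not demand continuity, such as the ${\mathcal A}(K,\Omega,\phi)$-class in \eqref{eq:bala0}, but then you would still owe a verification that $w-\phi\in L_0^{1,p}(B(0,R)\setminus K)$, which is not automatic). Either insert the $K_n$-step or supply that verification.
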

\begin{proof}
 The first inequality in \eqref{balas} was already noted in \eqref{limcap}, so we focus on the second.
  Let $K_n$ be  the union of the closed dyadic cubes of side length $2^{-n}$ that intersect $K$.
  Denote by $\psi_n$ the $p$-potential of $K_n$ in $\R^d$ and let $\delta>0$.
  Since $K=\cap_{n=1}^\infty K_n$, Theorem 2.2(iv) in \cite {hkm} implies that there exists $n$ such that $K_n \subset B(0,1+\delta)$ and
    $${\rm cap}_p(K_n)\le {\rm cap}_p(K)+\delta \,.$$
   Note that $\psi_n$ is continuous in $\R^d$ since $\R^d\setminus K_n$ is regular by the corkscrew condition (See \cite[Theorems 6.27 and 6.31]{hkm}.)
    Recall   that $U$ given by \eqref{eq:solU} is the $p$-potential of the unit ball in $\R^d$.
    Then $\psi_n(x)  \le U\Bigl(\frac{x}{1+\delta}\Bigr)$ for all $x$   by the definition of upper Perron solutions,
    so the function $$\psi(x)=\max\Bigl\{0,\frac{\psi_n(x)-(1+\delta)^\gamma R^{-\gamma}}{1-(1+\delta)^\gamma R^{-\gamma}}\Bigr\}$$
    is in $H_0^{1,p}(B(0,R))$ by \cite[Lemmas 1.23 and 1.26]{hkm}.
     Thus, using $\psi$ in   \eqref{defcap1} yields $${\rm cap}_p(K,B(0,R)) \le E(\psi,B(0,R)) \le \bigl(1-(1+\delta)^{\gamma} R^{-\gamma}\bigr)^{-p}  {\rm cap}_p(K_n) \,,$$
     which implies  the second inequality of \eqref{balas} since $\delta>0$ is arbitrary.
\end{proof}

\subsection{Energy estimate on truncated cones}

\begin{definition}
Let  $y \in {\mathbb S}^{d-1}$ and let $Q_{\delta}(y):=B(y,\delta) \cap {\mathbb S}^{d-1}$ be the open  spherical cap of Euclidean radius $\delta$ centered at $y$.
We define the spherical cone
\begin{eqnarray}\label{a1}
\Lambda_{\delta}(y):=\{ r q: q\in Q_{\delta}(y)\; \, \text{\rm and} \; \, r>0\}.
\end{eqnarray}
\end{definition}

\begin{lemma}\label{l:holder}
Let $Q$ be an open set  on the unit sphere,  and for $R>1$ set
\begin{eqnarray}\label{eqh:3}
\Lambda(Q,R):=\{ r q: q\in Q\; \, \text{\rm and} \; \, r > R\}\,.
\end{eqnarray}
Suppose that   a nonnegative function $v\in L^{1,p}(\Lambda(Q,R)) \cap C(\overline{\Lambda(Q,R)})$ satisfies
 \begin{eqnarray}\label{eqh:7a}
 v(x)\to 0 \quad  \mbox{as}  \quad |x|\to\infty \,,
 \end{eqnarray}
 and
 \begin{eqnarray}\label{eqh:7b}
 v(Ry)\ge \tilde A>0, \quad \forall y\in Q\,.
 \end{eqnarray}
Then,
\begin{eqnarray}\label{eqh:7}
E(v,\Lambda(Q,R)) \ge E(v_{\tilde A},\Lambda(Q,R)) \,,
\end{eqnarray}
where  $v_{\tilde A}(x)=\tilde A R^\gamma U(x)= \tilde A   U(x/R)$ for all $x \in \Lambda(Q,R)$.

\end{lemma}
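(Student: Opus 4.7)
My plan is to bound $E(v,\Lambda(Q,R))$ from below by the energy coming only from the radial component of $\nabla v$, then apply a weighted one-dimensional Hölder inequality on each ray emanating from the origin, and finally integrate over the angular variable. Sharpness of this Hölder estimate is attained precisely by the radial $p$-harmonic profile $v_{\tilde A}$, which is why the resulting lower bound will match $E(v_{\tilde A},\Lambda(Q,R))$ exactly.

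Concretely, for each $q \in Q$ the function $r \mapsto v(rq)$ is continuous on $[R,\infty)$ and tends to $0$ at infinity. By the ACL (absolutely continuous on lines) property of Sobolev functions, applied in polar coordinates, this function is absolutely continuous on $[R,\infty)$ for almost every $q$. The fundamental theorem of calculus and the hypothesis \eqref{eqh:7b} then give
$$\tilde A \le v(Rq) = -\int_R^{\infty}\partial_r v(rq)\,dr \le \int_R^{\infty}|\partial_r v(rq)|\,dr.$$
Splitting $|\partial_r v(rq)| = \bigl(|\partial_r v(rq)|^p r^{d-1}\bigr)^{1/p}\cdot r^{-(d-1)/p}$ and applying Hölder with conjugate exponents $p$ and $p/(p-1)$, together with the elementary identity $\int_R^\infty r^{-(d-1)/(p-1)}\,dr = R^{-\gamma}/\gamma$ (which holds since $p<d$ forces $(d-1)/(p-1)>1$), yields the pointwise lower bound
$$\tilde A^p \gamma^{p-1} R^{d-p} \le \int_R^{\infty}|\partial_r v(rq)|^p\,r^{d-1}\,dr \qquad \text{for a.e.\ } q\in Q.$$

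Integrating this estimate over $q\in Q$ against the spherical measure, switching back to Cartesian coordinates, and using $|\nabla v|^p \ge |\partial_r v|^p$, one obtains
$$|Q|\,\tilde A^p \gamma^{p-1} R^{d-p} \le \int_{\Lambda(Q,R)}|\partial_r v|^p\,dx \le E(v,\Lambda(Q,R)).$$
A direct polar-coordinate computation, using $|\nabla v_{\tilde A}(x)| = \tilde A R^\gamma \gamma\, |x|^{-\gamma-1}$ and the same integral identity as above, shows that the left-hand side equals precisely $E(v_{\tilde A},\Lambda(Q,R))$, which yields \eqref{eqh:7}.

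The only technical subtlety is regularity: since $v$ is only assumed to lie in $L^{1,p}\cap C$, the ray-wise fundamental theorem of calculus is legitimate only for a.e.\ $q\in Q$. This is not a serious obstacle — the continuity of $v$ up to the inner boundary guarantees $v(Rq)\ge \tilde A$ pointwise on $Q$, and the final integration in $q$ absorbs the measure-zero set of exceptional directions. I expect the extraction of the Hölder estimate and its integration to be routine; the structural content of the lemma is really the observation that the radial component of $\nabla v$ alone, constrained by the boundary data at $r=R$ and at infinity, suffices to force the stated energy lower bound.
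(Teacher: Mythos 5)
Your proof is correct and takes essentially the same route as the paper: radial Hölder inequality on almost every ray via the ACL property, then angular integration. The only cosmetic differences are that you keep $|\partial_r v|$ explicit until the final step (where the paper silently uses $|\partial_r v|\le|\nabla v|$ inside the ray estimate), and you close the argument by directly computing $E(v_{\tilde A},\Lambda(Q,R))$ in polar coordinates, whereas the paper reaches the same identity by checking the equality case of Hölder for $v_{\tilde A}$; both yield $\tilde A^p R^{d-p}\gamma^{p-1}\omega_{d-1}\mu(Q)$.
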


\begin{proof}
Assume \eqref{eqh:7a} and \eqref{eqh:7b} hold. Since $v\in L^{1,p}(\Lambda(Q,R))$, for $\mu$ almost every $y \in Q$,
the gradient $\nabla v(ry)$ exists and is finite for a.e. $r \in (R,\infty)$. Furthermore, \cite[Theorem 4.9.2]{EG92} implies that for $\mu$ almost every $y \in Q$, the function
$r\mapsto   v(ry) \,$  is  absolutely continuous in $\, (R,\infty)$.
 Thus, for  $\mu$ almost every $y \in Q$,    H{\"o}lder's inequality yields that
\begin{eqnarray}\label{eqh:1}
&& \tilde A\le \int_R^{\infty} \left | \nabla v(ry)\right| dr=\int_R^{\infty}  \left | \nabla v(ry)\right| r^\frac{d-1}{p} r^{\frac{1-d}{p}} \, dr\le \left(\int_R^{\infty}  \left | \nabla v(ry)\right|^p r^{d-1} \, dr\right)^\frac1p
\left( \int_R^{\infty} r^\frac{1-d}{p-1}dr\right)^\frac{p-1}{p}\nonumber \\
&&=\left(\frac{1}{\gamma R^{\gamma}}\right)^\frac{p-1}{p} \left(\int_R^{\infty}  \left | \nabla v(ry)\right|^p r^{d-1}dr\right)^\frac1p.
\end{eqnarray}
Taking $p$-th power and rearranging terms, we obtain
\begin{eqnarray}\label{eqh:2}
\tilde A^p R^{d-p}\gamma^{p-1} \le \int_R^{\infty}  \left | \nabla v(ry)\right|^p r^{d-1}dr.
\end{eqnarray}
Let   $\omega_{d-1}$ denote the area of the unit sphere.
Integrating \eqref{eqh:2} over $Q$, we obtain
\begin{eqnarray}\label{eqh:4}
\tilde A^p R^{d-p} \gamma^{p-1} \omega_{d-1} \mu (Q)  \le \omega_{d-1} \int_Q \int_R^{\infty}  \left | \nabla v(ry)\right|^p r^{d-1}\, dr \, d\mu(y)=\int_{\Lambda(Q,R)}   \left | \nabla v(x) \right|^p \, dx =E(v,\Lambda(Q,R))\,.
\end{eqnarray}

By the condition for equality in H{\"o}lder's inequality, the second inequality in \eqref{eqh:1} is an equality if for some constant $C$,
\begin{eqnarray}\label{eqh:5}
 \Bigl[|\nabla v(ry)| r^{\frac{d-1}{p}}\Bigr]^{p-1}=C r^{\frac{1-d}{p}} \quad \mbox{\rm for a.e.} \; r \ge R \,.
\end{eqnarray}
The function $v_{\tilde A}(x)=\tilde A R^\gamma U(x)$ satisfies  \eqref{eqh:5} and the boundary condition $v_{\tilde A}(Ry) =\tilde A$ for all $y \in Q$. Since
$$ |\nabla v_{\tilde A}(ry)|= -\frac{d}{dr}  v_{\tilde A}(ry)
$$
for $y \in \partial B(0,1)$, the first inequality in  \eqref{eqh:1} is also an equality if $v=v_{\tilde A}$. Therefore,
\begin{eqnarray}\label{eqh:6}
E(v_{\tilde A},\Lambda(Q,R)) =\tilde A^pR^{d-p} \gamma^{p-1} \omega_{d-1} \mu (Q) \,.
\end{eqnarray}
Combining \eqref{eqh:4} and \eqref{eqh:6}, we obtain   \eqref{eqh:7}.
\end{proof}

\subsection{Energy of an ansatz function}

Recall from \eqref{eq:2da3}  the definition
$$ u_{A}(x) = A   U_{1+\eps}(x) +(1-A) \sum_{s\in S} V_{\frac{1}{10\alpha}}^s\left(\frac{x-s}{\alpha \eps}\right) \,.
$$
 Since the gradients of the summands on the right-hand side have disjoint supports, we have
\begin{eqnarray} \label{decomp-ener}
E(u_A)=A^pE(U_{1+\eps})+(1-A)^p \sum_{s \in S}  E\Bigl( x \mapsto V_{\frac{1}{10\alpha}}^s \Bigl(\frac{x-s}{\alpha \eps}\Bigr)\Bigr) \,.
\end{eqnarray}
(By Remark \ref{pedant}, we can compute the energy over all of $\R^d$ in the summands on the right-hand side.)
 For any function $\psi \in L^{1,p}({\mathbb R}^d)$, vector $s \in {\mathbb R}^d$ and scalar $r>0$,  scaling arguments yield that
\begin{eqnarray} \label{scale}
E \Bigl(x \mapsto \psi\Bigl(\frac{x-s}{r}\Bigr) \Bigr)=r^{d-p} E(\psi) \,.
\end{eqnarray}
Therefore,
\begin{eqnarray}
E(u_A)=A^p (1+\eps)^{d-p} E(U)+(1-A)^p \sum_{s \in S} (\alpha\eps)^{d-p}E\bigl(V_{\frac{1}{10\alpha}}^s\bigr).
\end{eqnarray}
Using   the fact that $E(U)={\rm cap}_p(\bar B(0,1))$, we obtain
\begin{eqnarray}\label{eq:separ}
E(u_A)=(1+\eps)^{d-p} A^p {\rm cap}_p(\bar B(0,1))+(1-A)^p  \sum_{s \in S}  \kappa_s \,,
\end{eqnarray}
where
\begin{eqnarray} \label{decomp-ener3}
\kappa_s:=  {\rm cap}_p\left(\alpha \eps K_s, B\left(s,\frac{\eps}{10}\right)\right) \,.
\end{eqnarray}
By Lemma \ref{limcaprate} applied to $K_s$ and hypothesis   ${\rm (H_2)}$ on equality of capacities, we infer that
\begin{eqnarray}\label{eq:separ1}
 \Bigl|{\rm cap}_p\Bigl(K_s,   B\Bigl(0,\frac{1}{10\alpha}\Bigr)\Bigr) - {\rm cap}_p (K)\Bigr| \le  J(\alpha){\rm cap}_p (K) \,,
\end{eqnarray}
where
\begin{eqnarray}\label{eq:J}
 J(\alpha):=\bigl(1-(10\alpha)^\gamma\bigr)^{-p}-1 \to 0 \; \; \mbox{\rm as} \; \; \eps \to 0 \,.
 \end{eqnarray}
 Therefore,
 \begin{eqnarray}\label{eq:separ2}
\Bigl|E(u_A)-(1+\eps)^{d-p} A^p {\rm cap}_p(\bar B(0,1))-(1-A)^p (\alpha\eps)^{d-p}|S| {\rm cap}_p (K)\Bigr| \le
(1-A)^p  (\alpha\eps)^{d-p}  |S|J(\alpha){\rm cap}_p (K) \,.
\end{eqnarray}
Next, as $\eps \to 0$, we have by \eqref{eq:3} and the definition of $\tau$ in ${\rm (H_3)}$ that
\begin{eqnarray}
 (\alpha\eps)^{d-p}|S|= \bigl(\alpha \eps^{-1/\gamma}\bigr)^{d-p} \eps^{d-1} |S| \to \tau^{d-p}\sigma \,.
\end{eqnarray}
  In conjunction with  \eqref{eq:separ2}, this gives
\begin{eqnarray} \label{limener}
E(u_A)\to \varphi_\tau(A) \; \mbox{\rm as} \; \eps\to 0 \,,
\end{eqnarray}
where
\begin{eqnarray}
\varphi_\tau(A)=A^p {\rm cap}_p (\bar B(0,1))+(1-A)^p  {\rm cap}_p (K) \sigma \tau^{d-p}.
\end{eqnarray}
If $\tau \in [0,\infty)$, then the convergence in \eqref{limener} is uniform in $A \in [0,1]$.
Note that $\varphi_\infty(1)={\rm cap}_p (\bar B(0,1))$ and $\varphi_\infty(A)=\infty$ for all $A \in [0,1)$,
 and in the latter case, the convergence in \eqref{limener} means that the left-hand side tends to $\infty$.
   For $\tau \in (0,\infty)$, the function $\varphi_\tau$ is a continuous strictly convex function on $[0,1]$. Differentiation shows that it attains
    its minimum  at $A_*=A_*(\tau)$ given by \eqref{eq:A*}, with
\begin{eqnarray} \label{phimin}
 \varphi_\tau(A_*)= \left(\frac{\left(\sigma \tau^{d-p} {\rm cap}_p(K){\rm cap}_p(\bar B(0,1))\right)^\frac{1}{p-1}}{\left(\sigma \tau^{d-p}
 {\rm cap}_p(K)\right)^\frac{1}{p-1}+\left({\rm cap}_p(\bar B(0,1))\right)^\frac{1}{p-1}}\right)^{p-1} .
\end{eqnarray}
Clearly, $\varphi_\tau(A_*) \le \varphi_\tau(1) ={\rm cap}_p (\bar B(0,1))$ for all $\tau \in [0,\infty]$.

\begin{remark} \label{rem:adm}
Let $\eta:{\mathbb R}^d \to [0,1]$ be a $C^\infty$ cutoff function such that $\eta \equiv 1$ on $B(0,1)$ and $\eta \equiv 0$ outside $B(0,2)$. Then for $R>2$, we have that $V_R^s-\eta \in L_0^{1,p} (B(0,R)\setminus K_s)$ for each $s \in S$.
Thus, for every anchor $s$,
$$V_{\frac{1}{10\alpha}}^s\left(\frac{x-s}{\alpha \eps}\right)-\eta\ \left(\frac{x-s}{\alpha \eps}\right) \in L_0^{1,p} \Bigl(B(s,\eps/10)\setminus (s+\alpha\eps K_s)\Bigr) \,.
$$
Consequently, for all $A \in [0,1]$,  the ansatz function $u_A$ given by \eqref{eq:2da3}  belongs to the admissible class ${\cal A}(\Gamma,\mathbb{R}^d,\phi)$ with
$$\phi(x):=A\eta\Bigl(\frac{x}{1+\eps}\Bigr)+(1-A)\sum_{s\in S} \eta \Bigl(\frac{x-s}{\alpha \eps}\Bigr) \,.$$
 By \eqref{eq:bala0}, this implies that  $E(u)\le E(u_A).$ 
\end{remark}

\begin{cor} \label{cor:en}
For every $y$ on the unit sphere, $\delta>0$ and $\tau \in [0,\infty]$, we have
\begin{eqnarray}\label{c13b}
E(u_A,\Lambda_\delta(y)) \to \mu(Q_\delta) \varphi_\tau(A) \quad \mbox{as} \quad \eps \to 0 \,,
\end{eqnarray}
and the convergence is uniform in $A \in [0,1]$ and in $y \in {\mathbb S}^{d-1}$, provided that $\tau<\infty$.
Moreover, for every $\delta \in (0,1/2)$ and sufficiently small $\eps$ (that may depend on $\delta$),
\begin{eqnarray}\label{eq:bady3}
\forall A \in [0,1], \qquad E(u_A, \Lambda_{  \delta}(y)) \ge (1- \delta)^p E(u_{A^*}, \Lambda_{ \delta}(y))  \; \; \mbox{\rm for} \;\;  y \in {\mathbb S}^{d-1} \,.
\end{eqnarray}
\end{cor}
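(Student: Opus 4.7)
My plan is to decompose $E(u_A, \Lambda_\delta(y))$ along the bulk-plus-bumps structure of the ansatz $u_A$ to prove \eqref{c13b}, then to derive \eqref{eq:bady3} from \eqref{c13b} via the minimization property of $A_*$. The key observation for the decomposition is that the gradients of the two kinds of summands in \eqref{eq:2da3} have pairwise disjoint supports: each bump sits in $B(s,\eps/10)$ with $s\in \mathbb{S}^{d-1}$, where the triangle inequality forces $|x|\le 1+\eps/10 \le 1+\eps$, whereas $\nabla U_{1+\eps}$ is supported on $\{|x|\ge 1+\eps\}$. Hence $E(u_A,\Lambda_\delta(y))$ splits exactly as in \eqref{decomp-ener}. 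By radial symmetry the bulk term equals $A^p \mu(Q_\delta)(1+\eps)^{d-p}\mathrm{cap}_p(\bar B(0,1))$. A bump at $s$ contributes $\kappa_s$ if $B(s,\eps/10)\subset \Lambda_\delta(y)$ (which holds when $s$ lies in the $(\eps/10)$-interior of $Q_\delta(y)$) and $0$ if $B(s,\eps/10)$ is disjoint from $\Lambda_\delta(y)$; the contribution from the anchors in the thin $(\eps/10)$-shell around $\partial Q_\delta(y)$ is $O(\eps)$ and hence negligible. Using (H_1) to count anchors in $Q_\delta(y)$, (H_2) to equate capacities, and Lemma \ref{limcaprate} as in the derivation of \eqref{eq:separ2}, the bump sum converges to $(1-A)^p \mu(Q_\delta)\sigma\tau^{d-p}\mathrm{cap}_p(K)$ for $\tau<\infty$, yielding \eqref{c13b}.

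Uniformity in $A\in[0,1]$ is immediate since the error terms above are independent of $A$ after extracting the bounded factors $A^p$ and $(1-A)^p$. For uniformity in $y\in \mathbb{S}^{d-1}$, I would upgrade the weak-$*$ convergence in (H_1) by sandwiching $\mathbf{1}_{Q_\delta(y)}$ between smooth functions $\psi^\pm_\eta(\cdot,y)$ obtained by mollifying a fixed model cap indicator and then rotating so that its center lies at $y$. Weak-$*$ convergence gives pointwise-in-$y$ convergence of $\eps^{d-1}\sum_s\psi^\pm_\eta(s,y)$; the maps $y\mapsto \eps^{d-1}\sum_s\psi^\pm_\eta(s,y)$ are equi-Lipschitz in $y$ (with constants controlled via $\eps^{d-1}|S|\to \sigma$ and the Lipschitz constant of the mollifier), so compactness of $\mathbb{S}^{d-1}$ promotes the pointwise convergence to a uniform one.

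For \eqref{eq:bady3} in the principal case $\tau \in (0,\infty)$, applying the uniform convergence \eqref{c13b} twice yields, for any $\epsilon_0>0$ and all sufficiently small $\eps$ (depending on $\delta$),
\begin{equation*}
E(u_A,\Lambda_\delta(y)) \ge \mu(Q_\delta)\varphi_\tau(A_*) - \epsilon_0 \quad \text{and} \quad E(u_{A_*},\Lambda_\delta(y)) \le \mu(Q_\delta)\varphi_\tau(A_*) + \epsilon_0,
\end{equation*}
uniformly in $A\in[0,1]$ and $y\in\mathbb{S}^{d-1}$, where the first inequality uses $\varphi_\tau(A)\ge \varphi_\tau(A_*)$. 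Since $\varphi_\tau(A_*)>0$, choosing $\epsilon_0 < \frac{1-(1-\delta)^p}{1+(1-\delta)^p}\mu(Q_\delta)\varphi_\tau(A_*)$ gives \eqref{eq:bady3}. The degenerate cases $\tau=0$ (where $A_*=0$: if $A\le \delta$ then $(1-A)^p\ge (1-\delta)^p$ handles the comparison directly at the bump level, whereas if $A>\delta$ the positive bulk term in $E(u_A,\Lambda_\delta(y))$ dominates the vanishing $E(u_{A_*},\Lambda_\delta(y))$) and $\tau=\infty$ (where $A_*=1$, and one splits on $A\gtrless 1-\delta$: for $A\ge 1-\delta$ use $A^p\ge (1-\delta)^p$ on the bulk, for $A<1-\delta$ the divergent bump sum in $E(u_A,\Lambda_\delta(y))$ dominates the bounded $E(u_{A_*},\Lambda_\delta(y))$) are handled by these short case splits.

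The main technical obstacle is the uniform-in-$y$ upgrade of weak-$*$ convergence in (H_1); bare weak-$*$ convergence only controls integrals against continuous test functions, so extracting uniform behavior over the parametric family of caps $\{Q_\delta(y)\}$ requires the sandwich-plus-compactness argument above. Once that is in hand, both parts of the corollary follow from the decomposition of $u_A$ and the minimization of $\varphi_\tau$ at $A_*$.
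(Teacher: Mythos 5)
Your proof is correct and follows essentially the same route as the paper: decompose $E(u_A,\Lambda_\delta(y))$ via the disjointness of the bulk and bump gradient supports, sandwich the bump contribution by counting anchors in slightly smaller and larger caps, invoke $({\rm H}_1)$--$({\rm H}_3)$ and Lemma \ref{limcaprate} for the asymptotics, and then derive \eqref{eq:bady3} from \eqref{c13b} with the same three-way case split on $\tau$. The one place you go beyond what the paper writes out is the mollification/sandwich argument upgrading the weak-$*$ convergence in $({\rm H}_1)$ to a statement uniform in $y$; the paper asserts the uniform asymptotic $\eps^{d-1}|S\cap Q_{\delta\pm\eps}(y)|\to\sigma\mu(Q_\delta)$ without detailing this step, and your argument is a reasonable way to fill it in.
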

\begin{proof}
By the definition of $u_A$ and \eqref{eq:musk}, we have
\begin{eqnarray} \label{decomp-ener2}
(1-A)^p \sum_{s \in S \cap Q_{\delta-\eps}(y)} \kappa_s \le E(u_A, \Lambda_{\delta}(y))- A^p (1+\eps)^{d-p}E(U, \Lambda_{\delta}(y) ) \le
 (1-A)^p \sum_{s \in S \cap Q_{\delta+\eps}(y)} \kappa_s \,,
\end{eqnarray}
where $\kappa_s$ was defined in \eqref{decomp-ener3}.
Since $U$ is radial,
 $$E(U, \Lambda_{\delta}(y) )=\mu(Q_\delta)  {\rm cap}_p\bigl(\bar{B}(0,1)\bigr) \,.$$
 The rest of the proof of \eqref{c13b}  proceeds exactly like the proof of \eqref{limener},
 since the left-hand and right-hand sides of \eqref{decomp-ener2} have the same asymptotics:
 $$\eps^{d-1} |S \cap Q_{\delta\pm\eps}(y)|  \to \sigma \mu(Q_\delta) \quad \mbox{as} \quad \eps \to 0 \,.
 $$

It remains to verify \eqref{eq:bady3}. If $0<\tau<\infty$, then this follows from the uniform convergence (in the parameter $A$) in \eqref{c13b}.
 If $\tau=0$, then $A_*=0$. In this case, \eqref{eq:bady3}
is obvious if  $A\le \delta$.  On the other hand, if $\tau=0$ and $A>\delta$, then the left-hand side of  \eqref{eq:bady3} is at least $\delta^p E(U,\Lambda_\delta(y))>0$ which does not depend on $\eps$,
while the right-hand side of  \eqref{eq:bady3} tends to $0$ as $\eps \downarrow 0$; Thus, \eqref{eq:bady3}  holds in this case as well provided $\eps$ is small enough.

Finally,  if $\tau=\infty$, then $A_*=1$ and
$u_{A^*}=U_{1+\eps}$;  in this case, \eqref{eq:bady3}
is obvious if $A>1-\delta $, while the lower bound  $$E(u_A, \Lambda_{\delta}(y))\ge
 (1-A)^p \sum_{s \in S \cap Q_{\delta-\eps}(y)} \kappa_s \,,$$
 implies  that
 $$\lim_{\eps \to 0} \min_{A \in [0,1-\delta]} E(u_A, \Lambda_{\delta}(y))  = \infty \,,$$
so \eqref{eq:bady3}  holds if  $\tau=\infty$ and $A\le 1-\delta $ as well.
\end{proof}

\section{Bounding oscillation and energy of $u$ in cones} \label{heart}
In this section, the positive constants $C,C_1,C_2,\ldots$   depend only on $d,p$.
\begin{lemma} \label{l:1}
Suppose that for some $r>0$ and $z \in \R^d$ we have
\begin{eqnarray}\label{a4}
B(z,r)\subset B(z,5r/4) \subset \R^d\setminus \Gamma
\end{eqnarray}
and for some
$\lambda>0, \beta \ge 0$, the solution $u$ of \eqref{eq:2} satisfies
\begin{eqnarray}\label{a3}
E(u, B(z,5r/4)) \le \lambda^p r^{d-1-\beta} \,.
\end{eqnarray}
Then,
\begin{eqnarray}\label{a5}
\underset{ B(z,r)} {\rm osc}~ u  \le C_1 \lambda r^\frac{p-1-\beta}{p} \,,
\end{eqnarray}
where
\begin{eqnarray}\label{a5a}
\underset{ D} {\rm osc}~ u:=\sup_{D} u-\inf_{D} u
\end{eqnarray}
stands for the  oscillation over the set $D$.
\end{lemma}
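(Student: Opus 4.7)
The plan is to reduce the oscillation bound on $B(z,r)$ to a pointwise gradient bound on the same ball, which in turn follows from the classical local $L^\infty$-estimate for $\nabla u$ when $u$ is $p$-harmonic.

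First, since $B(z,5r/4) \subset \R^d \setminus \Gamma$, the function $u$ is $p$-harmonic throughout $B(z,5r/4)$ and, by the regularity theory of the $p$-Laplacian (Uhlenbeck for $p \ge 2$; DiBenedetto and Tolksdorf in general), belongs to $C^{1,\alpha}_{\rm loc}$ in that ball. The key ingredient I will invoke is the standard interior gradient estimate: for any $p$-harmonic $w$ in $B(x_0, R) \subset \R^d$ and any $\theta \in (0,1)$,
$$\sup_{B(x_0,\theta R)} |\nabla w|^p \le \frac{C(\theta,d,p)}{R^d} \int_{B(x_0,R)} |\nabla w|^p\,,$$
see e.g. \cite{Lindq}. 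Applying this with $x_0=z$, $R=5r/4$, and $\theta=4/5$ (so that $\theta R = r$), and then using the hypothesis \eqref{a3}, I obtain
$$\sup_{B(z,r)} |\nabla u|^p \le C_0\, r^{-d}\,\lambda^p r^{d-1-\beta} = C_0 \lambda^p r^{-(1+\beta)}\,.$$

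Second, since $u \in C^1(B(z,5r/4))$ and $B(z,r)$ is convex, the fundamental theorem of calculus along the segment from any $x_1$ to any $x_2$ in $B(z,r)$ yields
$$|u(x_1) - u(x_2)| \le |x_1 - x_2| \cdot \sup_{B(z,r)} |\nabla u| \le 2r \cdot C_0^{1/p} \lambda\, r^{-(1+\beta)/p} = C_1 \lambda\, r^{(p-1-\beta)/p}\,.$$
Taking supremum over $x_1,x_2 \in B(z,r)$ gives \eqref{a5}.

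The only substantive input is the pointwise gradient estimate for $p$-harmonic functions, which is a deep but classical theorem in the regularity theory of the $p$-Laplacian. Once it is invoked, the rest is routine: the exponent $(p-1-\beta)/p$ is dictated purely by scaling, balancing the factor $R^{-d}$ from the gradient estimate, the energy-bound exponent $d-1-\beta$, and the linear factor of $r$ picked up when integrating $|\nabla u|$ along a line segment. I anticipate no obstacle beyond correctly citing and applying the gradient bound.
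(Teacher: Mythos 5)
Your proposal is correct, but it follows a genuinely different route from the paper. The paper's proof stays at the level of \emph{local boundedness} of $p$-subsolutions: it applies Poincar\'e's inequality to control $\int_{B(z,5r/4)}|u-t|^p$ by the energy, splits $u-t$ into its positive and negative parts (which are $p$-subharmonic), and invokes the sup-estimate for subsolutions (\cite[Lemma~3.6]{Lindq}, a De Giorgi/Moser-type result) to get an $L^\infty$ bound on each part in $B(z,r)$; adding the two bounds gives the oscillation estimate. You instead jump directly to the interior $L^\infty$ gradient estimate
$$\sup_{B(z,r)}|\nabla u|^p \le \frac{C}{r^d}\int_{B(z,5r/4)}|\nabla u|^p\,,$$
which is a $C^{1,\alpha}$-type result (Uhlenbeck, DiBenedetto, Tolksdorf, Lewis), and then integrate $|\nabla u|$ along a segment. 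Both arguments are valid and yield the same scaling, and you are right that $u$ is $p$-harmonic in $B(z,5r/4)$ because that ball avoids $\Gamma$, so the gradient estimate applies. The trade-off is that the paper gets by with a weaker and more robust piece of machinery (local boundedness of subsolutions, which holds for general divergence-form structures and requires no differentiability of $u$), whereas your argument invokes a deeper regularity theorem whose proof is substantially harder and relies on the specific structure of the $p$-Laplacian. In this setting both inputs are available, so the choice is a matter of economy; the paper's route is the lighter one, and it also matches the style of the rest of the paper, where no gradient regularity for $u$ is ever used.
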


\smallskip

\begin{proof}
By Poincar{\' e}'s inequality, there exists a real $t$ such that
\begin{eqnarray}\label{a6}
J_p=\int_{B(z,5r/4)} \left  \vert u - t  \right \vert^p\le C_2 r^p  \int_{B(z,5r/4)} \vert \nabla u \vert^p =C_2 r^p E(u, B(z,5r/4))\,.
\end{eqnarray}
The hypothesis \eqref{a3} and the inequality \eqref{a6} give
\begin{eqnarray}\label{a7}
J_p\le C_2\lambda^p r^{p+d-1-\beta}\,.
\end{eqnarray}
Let
\begin{eqnarray}
u_1(x)=\max\{u(x)-t,0\}, \qquad u_2(x)=\max\{t-u(x),0\}.
\end{eqnarray}

By \cite[ Lemma 3.6]{Lindq} we  have
\begin{eqnarray}\label{a8}
\left \Vert u_i \right \Vert_{L^{\infty}(B(z,r))}
\le C_3 \left(\frac{J_p}{r^d}\right)^\frac{1}{p} \le C_4 \lambda r^\frac{p-1-\beta}{p}, \qquad i=1,2,
\end{eqnarray}
which implies \eqref{a5} with $C_1=2C_4$.
\end{proof}

\begin{definition}\label{defbad}
Fix $\beta =\frac{p-1}{2p}$.  Given $\delta<1/20$ and $\eps<\delta/20$, an anchor $s \in S$ will be called  a {\bf good anchor} if
\begin{eqnarray}
E(u,\Lambda_{\zeta}(s))\le \zeta^{d-1-\beta},\qquad \forall \zeta\in[\eps,\delta] \,.
\end{eqnarray}
Otherwise, $s$ will be called a {\bf bad anchor}.
\end{definition}

\begin{figure}[h!]
\centering
\includegraphics[width=6.5in]{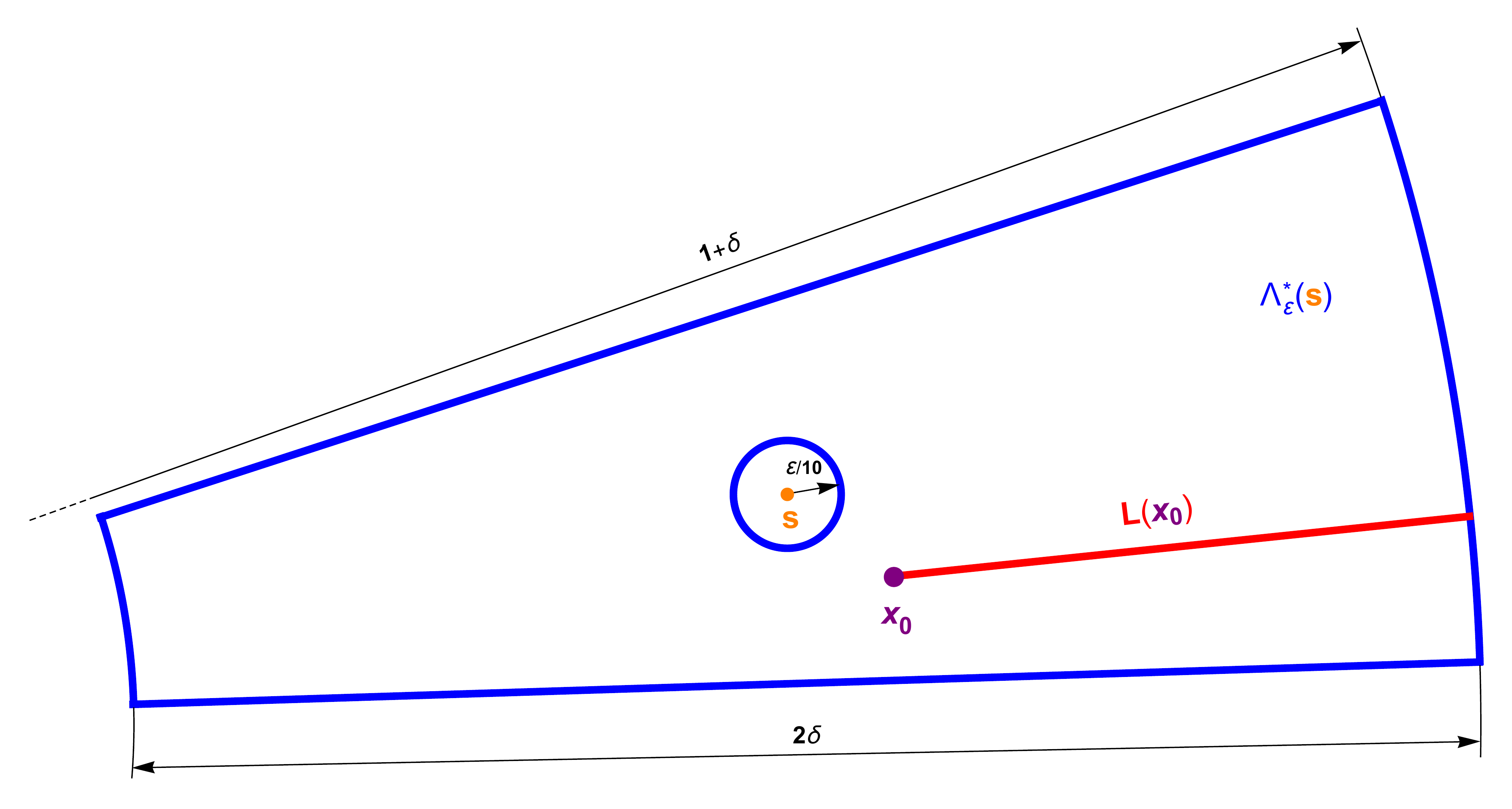}\\
\includegraphics[width=6.5in]{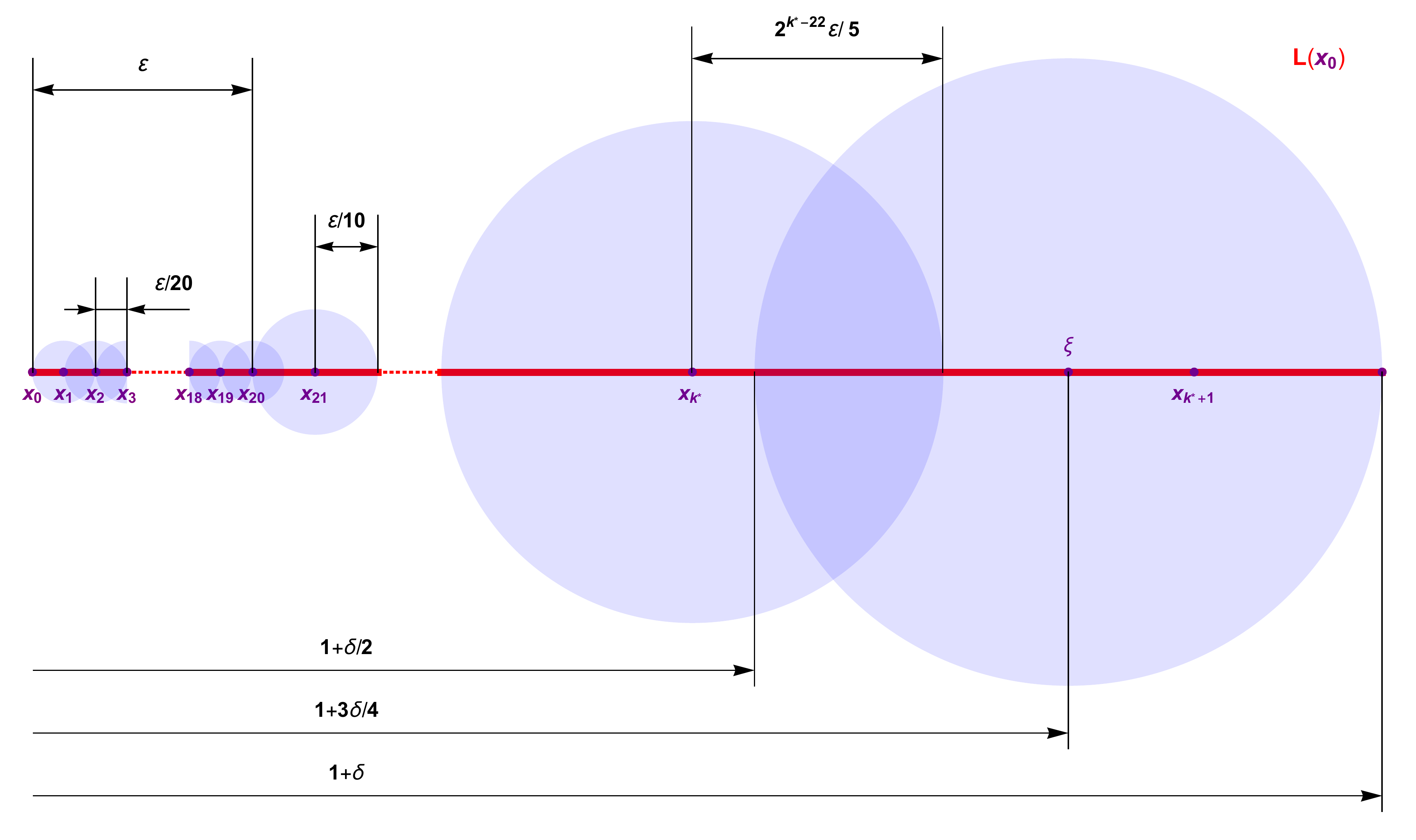}
\caption{Geometric objects used in the proof of Lemma \ref{l:2}: The top figure indicates  the point $x_0$ and the line segment $L(x_0)$   in the cored wedge  $\Lambda_{\eps}^*(s)$ defined by \eqref{eq:star}.
The bottom figure shows the points $x_k$ and   $\xi$ on  $L(x_0)$ and the  corresponding overlapping balls centered at these points.}
\label{fig:lballs}
\end{figure}

\begin{lemma}\label{l:2}

Fix $\eps,\delta,\beta$ as in the definition above.  Suppose $s$ is a good anchor and
let
\begin{eqnarray} \label{eq:star}
\Lambda_{\eps}^*(s):= \Bigl\{ x\in \Lambda_{\eps/2}(s)\setminus B\left(s,\frac{\eps}{10}\right) : 1-\delta \le \vert x \vert \le 1+\delta \Bigr\} \,.
\end{eqnarray}
Then there exists $C_5$, such that
\begin{eqnarray}
\underset{\Lambda_{\eps}^*(s) } {\rm osc} ~ u \le C_5 \delta^\frac{p-1-\beta}{p} \,.
\end{eqnarray}
\end{lemma}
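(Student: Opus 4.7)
The plan is to bound $|u(x_0) - u(\xi^*)|$ for a fixed reference point $\xi^* := (1+\delta) s$ by chaining oscillation estimates from Lemma \ref{l:1} along an explicit path $L(x_0)$ from $x_0$ to $\xi^*$. Since $u$ is continuous on $\Lambda_\eps^*(s)$, the oscillation is then bounded by $2\sup_{x_0}|u(x_0)-u(\xi^*)|$, so it suffices to show this supremum is $O(\delta^{(p-1-\beta)/p})$.

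For $x_0 \in \Lambda_\eps^*(s)$ with $|x_0|\ge 1$ and $\rho_0 := |x_0-s| \ge \eps/10$, take $L(x_0)$ as a short tangential segment from $x_0$ to $\tilde x_0:=(1+\rho_0)s$, followed by the radial segment $\{(1+t)s : \rho_0 \le t \le \delta\}$ from $\tilde x_0$ to $\xi^*$. Cover the radial part by a geometric chain of balls $B(y_k,r_k)$ with $y_k := (1+\rho_k)s$, $\rho_k := 2^k \rho_0$ up to $\rho_N \asymp \delta$, and $r_k := \rho_k/2$. The key geometric identity
\begin{eqnarray*}
|y_k-s'|^2 \;=\; \rho_k^2 + (1+\rho_k)|s-s'|^2 \;\ge\; \rho_k^2 + \eps^2 \qquad \forall\, s'\in S(\eps)\setminus\{s\}
\end{eqnarray*}
yields $|y_k - s'|\ge \max(\rho_k,\eps)$, which together with $|y_k-s|=\rho_k\ge \eps/10 > \alpha\eps$ implies $B(y_k,5r_k/4)\subset \R^d\setminus \Gamma$. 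Because $y_k$ lies on the axis of the cone $\Lambda_{\eps/2}(s)$, one checks that $B(y_k, 5r_k/4) \subset \Lambda_{\zeta_k}(s)$ for $\zeta_k := \max(\eps, 5\rho_k/8)\in [\eps,\delta]$, and the good-anchor hypothesis then gives $E(u, B(y_k,5r_k/4))\le \zeta_k^{d-1-\beta}$. Applying Lemma \ref{l:1} produces $\underset{B(y_k,r_k)}{\rm osc}\, u \le C\, \max(\eps,\rho_k)^{(p-1-\beta)/p}$; since consecutive balls overlap ($|y_{k+1}-y_k|=\rho_k<3\rho_k/2=r_k+r_{k+1}$), the telescoping triangle inequality gives $|u(\tilde x_0)-u(\xi^*)|\le 2\sum_k \underset{B(y_k,r_k)}{\rm osc}\, u$. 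This sum splits into $O(1)$ terms with $\rho_k\le\eps$ (each $O(\eps^{(p-1-\beta)/p})$) plus a geometric tail with ratio $2^{(p-1-\beta)/p}>1$ dominated by its last term $\asymp \delta^{(p-1-\beta)/p}$, for a total of $O(\delta^{(p-1-\beta)/p})$.

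For the tangential segment, a direct computation shows $|x_0-\tilde x_0|\le C\eps$ uniformly in $\rho_0 \in [\eps/10,\delta]$ (the tangential offset of $x_0$ from the axis is $\le(1+\delta)\eps/2$ by the defining angular constraint of $\Lambda_{\eps/2}(s)$, while the radial gap $|(r_0-1)-\rho_0|$ contributes at most $O(\eps^2/\rho_0)\wedge\rho_0 = O(\eps)$), and the segment stays at distance $\ge \rho_0/\sqrt{2}$ from $s$ and $\ge \eps/(2\sqrt{2})$ from every other anchor. It can thus be covered by $O(1)$ balls of radius comparable to $\min(\rho_0,\eps)$ that lie in $\Lambda_\eps(s)\setminus\Gamma$, each contributing oscillation $O(\eps^{(p-1-\beta)/p})\le O(\delta^{(p-1-\beta)/p})$. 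The case $|x_0|<1$ is handled symmetrically, with an additional connecting arc of $O(1)$ balls of radius $\eps/10$ going around $B(s,\eps/10)$ to join $(1-\eps/5)s$ to $(1+\eps/5)s$; each such ball is contained in $\Lambda_\eps(s)\setminus\Gamma$ and contributes $O(\eps^{(p-1-\beta)/p})$ by the same application of Lemma \ref{l:1}.

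The main obstacle is the simultaneous control of ball size, cone containment, and cavity avoidance along the long radial portion of the chain. A naive chain of balls of radius $c\rho_k$ along a generic path inside $\Lambda_{\eps/2}(s)$ would intersect cavities at other anchors as soon as $c\rho_k \gtrsim \eps$, because the distance from a generic point of $\Lambda_{\eps/2}(s)$ to the nearest other anchor is only of order $\eps$. The crucial idea is to route the chain through the axial ray $\{(1+t)s : t>0\}$: the Pythagorean identity displayed above guarantees distance $\max(\rho_k,\eps)$ to every other anchor, permitting balls as large as $\rho_k/2$ so that the resulting geometric series telescopes to the desired bound $C_5\,\delta^{(p-1-\beta)/p}$.
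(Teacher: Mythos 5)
Your proof is correct in its essential structure and takes a variant route from the paper, though the difference is smaller than your concluding remarks suggest. Both arguments are chains of overlapping balls with geometrically growing radii, controlled via Lemma \ref{l:1} together with the good-anchor hypothesis, and summed as an almost-geometric series. What differs is the routing. The paper connects $x_0$ \emph{radially} (along the ray from the origin through $x_0$) to a point near the cap at $|x|=1\pm\delta$, and separately observes that the oscillation of $u$ on those caps is $O(\delta^{(p-1-\beta)/p})$ by Lemma \ref{l:1}. You instead move tangentially from $x_0$ onto the axial ray $\{(1+t)s\}$, and then outward along that axis to a fixed reference point $\xi^*=(1+\delta)s$; the Pythagorean identity $|y_k-s'|^2=\rho_k^2+(1+\rho_k)|s-s'|^2$ is indeed correct and gives a clean quantitative lower bound on the distance from axial ball centers to all other anchors.

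However, your concluding motivation is misleading: the paper's radial chain is not the ``naive'' chain you describe. Its ball centers satisfy $|x_k|-1\gtrsim r_k$ (indeed the paper records $|x_k|-1\ge 3r_k/2$), so once $r_k$ exceeds $O(\eps)$ the balls lie well outside the $\alpha\eps$-neighborhood of the unit sphere and trivially avoid \emph{all} cavities, while for small $r_k=\eps/20$ the balls stay near $s$ but at distance $\ge 3\eps/80$ from it and at distance $\gtrsim\eps$ from every other anchor (by the same perpendicular-projection estimate your identity encodes). So the radial chain works; the axial routing is an alternative, not a rescue. The price you pay for it is the additional tangential segment and the associated geometric bookkeeping, where a few of your constants are slightly off: the minimum distance from the tangential chord to $s$ is bounded below by about $\rho_0/2$ rather than $\rho_0/\sqrt{2}$ (the chord has both endpoints on $\partial B(s,\rho_0)$ and length up to $\sim 2\eps$, which can exceed $\rho_0\sqrt{2}$ when $\rho_0\sim\eps/10$), the balls covering that segment must have radius a small multiple of $\min(\rho_0,\eps)$ rather than comparable to it, and you need one extra ball (or a cap-oscillation observation as in the paper) to bridge the last $y_N$ to $\xi^*$ since $\rho_N$ need not equal $\delta$. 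None of these affect the validity of the argument, but they are places where the write-up would need tightening.
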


\begin{proof}
Take $x_0\in \Lambda_{\eps}^*(s)$ and connect $x_0$ radially by a line segment $L(x_0)$  to the cap   $\Lambda_{\eps}^*(s) \cap \{|x|=1+\delta\}$ or   the cap   $\Lambda_{\eps}^*(s) \cap \{|x|=1-\delta\}$, depending on whether $|x_0| \ge 1$ or not.  (Observe that the oscillation of $u$ on each of these caps is at most $C_1 \delta^\frac{p-1-\beta}{p}$ by Lemma~\ref{l:1}.)
Assume first that $1 \le |x_0| \le 1+\delta/4.$  We then define a sequence of points   $\{x_k\}_{k \ge 0}$ along $L(x_0)$ and corresponding radii $r_k$  as follows  (See Figure \ref{fig:lballs}):

\smallskip

 For $k=0,1,\ldots, 20$, let $|x_k|=|x_0|+k\eps/20$ and $r_k=\eps/20$. For $k>20$, let $r_k=2^{k-22}\eps/5$ and  $|x_k|=|x_{k-1}|+r_k$.
Denote by $k_*$ the last $k$ such that  $|x_k| \le 1+3\delta/4.$ Then for all $k\in [0, k_*]$, we have  $|x_k-s| \ge 3r_k/2$ and
  $3r_k/2   \le |x_{k_*}|- 1 \le 3\delta/4$, whence $r_k \le \delta/2$. Note that for $k \le k_{*}$, we have
$$B(x_k,5r_k/4) \subset \Lambda_{\frac{3r_k}{2}+2\eps}(s)\setminus \Gamma\,.
$$
We also have $|x_{k_*}| >1+r_{k_*}$ and $1+3\delta/4 \le |x_{k_*+1}| = |x_{k_*}|+2r_{k_*}$, so
 $$
 |x_{k_*}|-1+r_{k_*}>\bigl(3|x_{k_*}|-3+3r_{k_*}-(|x_{k_*}|-1-r_{k_*})\bigr)/3 =2(|x_{k_*}|+2r_{k_*}-1)/3 \ge  \delta/2 \,.
 $$
  Let $\xi$ be the point on $L(x_0)$ that satisfies $|\xi|=1+3\delta/4$. Observe that $B(\xi, 5\delta/{16}) \subset \Lambda_\delta(s)\setminus \Gamma$.
Applying the preceding lemma to the overlapping balls $\{B(x_k,r_k)\}_{k \le k_*}$  and $B(\xi, \delta/4)$   yields, for some $C_6$, that
\begin{eqnarray}\label{oscnow}
\forall k \le k_{*} \,, \qquad \underset{ B(x_k,r_k)} {\rm osc}~ u  \le C_6 r_k^\frac{p-1-\beta}{p} \,, \qquad
\underset{ B(\xi, \delta/4)} {\rm osc}~ u  \le C_6 \delta^\frac{p-1-\beta}{p} \,.
\end{eqnarray}
 Summing the resulting  (almost geometric) series concludes the proof for the oscillation over the line segment $L(x_0)$. Considering the union of these line segments over all $\{x_0\in \Lambda_{\eps}^*(s) : 1 \le |x_0| \le 1+\delta/4\}$  allows us to bound the oscillation over
$\{ x\in \Lambda_{\eps}^*(s) :  \vert x \vert  \ge 1\}$. A similar argument applies to $\{ x\in \Lambda_{\eps}^*(s) :  \vert x \vert  \le 1\}$.
Since these two sets intersect on the unit sphere, the proof is complete.
\end{proof}

\begin{lemma}\label{l:3}
Fix $\eps$, $\delta,$ $\beta$ as in Definition \ref{defbad}, and let $y$ be a point on the unit sphere   ${\mathbb S}^{d-1}$.  Suppose that
\begin{equation} \label{energy3}
 E(u,\Lambda_{2\delta}(y)) \le M\delta^{d-1},
 \end{equation} for some $M>0$. Then   the set $S_\beta$  of bad anchors satisfies  $|S_\beta \cap Q_\delta(y)| \le C_7 M\delta^\beta (\delta/\eps)^{d-1}$, where $C_7=C_7(d,p)$.
\end{lemma}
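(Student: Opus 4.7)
The plan is to use a covering argument. For each bad anchor $s \in S_\beta \cap Q_\delta(y)$, by definition there exists a scale $\zeta_s \in [\eps,\delta]$ such that
\begin{eqnarray*}
E(u,\Lambda_{\zeta_s}(s)) > \zeta_s^{d-1-\beta}.
\end{eqnarray*}
Since $s \in Q_\delta(y)$ and $\zeta_s \le \delta$, the spherical cap $Q_{\zeta_s}(s)$ is contained in $Q_{2\delta}(y)$, so $\Lambda_{\zeta_s}(s) \subseteq \Lambda_{2\delta}(y)$. I now want to extract, from the family $\{Q_{\zeta_s}(s) : s \in S_\beta \cap Q_\delta(y)\}$, a subcollection that still covers all bad anchors but has controlled overlap.

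Apply the Besicovich covering lemma (either on the small spherical caps directly, or via the bi-Lipschitz correspondence between a neighborhood in $\mathbb{S}^{d-1}$ and a ball in $\R^{d-1}$, valid since $\delta < 1/20$) to the family of balls $\{Q_{\zeta_s}(s)\}_{s \in S_\beta \cap Q_\delta(y)}$. This produces a subcollection $\mathcal{F}' = \{Q_{\zeta_{s'}}(s') : s' \in S'\}$ whose union covers $S_\beta \cap Q_\delta(y)$ and which has multiplicity bounded by a dimensional constant $N=N(d)$. Because the assignment $Q_\zeta(s) \mapsto \Lambda_\zeta(s)$ preserves overlap, the cones $\{\Lambda_{\zeta_{s'}}(s')\}_{s'\in S'}$ also have multiplicity at most $N$.

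Since anchors in $S$ are $\eps$-separated on the sphere, each cap of Euclidean radius $\zeta \ge \eps$ contains at most $C(d)(\zeta/\eps)^{d-1}$ anchors. Combining the covering property with the badness condition:
\begin{eqnarray*}
|S_\beta \cap Q_\delta(y)| \le \sum_{s' \in S'} |S \cap Q_{\zeta_{s'}}(s')|
\le C\eps^{1-d} \sum_{s' \in S'} \zeta_{s'}^{d-1}
\le C\eps^{1-d}\, \delta^\beta \sum_{s' \in S'} \zeta_{s'}^{d-1-\beta},
\end{eqnarray*}
where the last step uses $\zeta_{s'} \le \delta$. By the defining inequality $\zeta_{s'}^{d-1-\beta} < E(u,\Lambda_{\zeta_{s'}}(s'))$ and the bounded-overlap property together with \eqref{energy3},
\begin{eqnarray*}
\sum_{s' \in S'} \zeta_{s'}^{d-1-\beta} \le \sum_{s' \in S'} E(u,\Lambda_{\zeta_{s'}}(s')) \le N \cdot E(u,\Lambda_{2\delta}(y)) \le NM\delta^{d-1}.
\end{eqnarray*}
Chaining these inequalities yields $|S_\beta \cap Q_\delta(y)| \le C_7 M \delta^\beta (\delta/\eps)^{d-1}$ with $C_7 = CN$, which is the desired bound.

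The only point requiring a moment of care is the application of Besicovich on the sphere; since all caps have radius $\le \delta < 1/20$, the sphere is locally (up to a fixed bi-Lipschitz distortion) Euclidean at this scale, and the covering theorem applies with a constant depending only on $d$. Everything else — the $\eps$-separation counting bound and the conversion of badness into an energy lower bound — is routine.
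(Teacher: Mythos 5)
Your proof is correct and follows essentially the same strategy as the paper: extract a Besicovich subfamily of caps with bounded overlap, convert the badness condition into a lower bound on the local energy, and sum. The only cosmetic difference is in the counting step — you bound $|S_\beta \cap Q_\delta(y)|$ by counting anchors inside each cap of the subcover, whereas the paper bounds the same quantity via the disjointness of the $\eps/2$-caps and a measure estimate of the union of doubled caps — but these are interchangeable routine arguments leading to the same estimate.
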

The proof is a variant of the classical proof of the Hardy-Littlewood maximal inequality.
\begin{proof}
By the definition of bad anchors, for each $s \in S_\beta \cap Q_\delta(y)$ there is a $ \zeta=\zeta(s) \in[\eps,\delta]$ such that
\begin{eqnarray} \label{eq:bad}
E(u,\Lambda_{\zeta}(s))>  \zeta^{d-1-\beta} \ge  \delta^{-\beta} \zeta^{d-1}.
\end{eqnarray}
The caps  $Q_{\zeta(s)} (s)$ with $s \in S_\beta \cap Q_\delta(y)$   form a Besicovitch covering of $S_\beta \cap Q_\delta(y)$. By the Besicovitch Covering Theorem, we can extract a finite subcover $\{Q_{\zeta_j}(s_j)\}_{j=1}^N$, where $\zeta_j=\zeta(s_j)$ for each $j$,
so that every point on the unit sphere belongs to at most $C_8=C_8(d)$ elements of this subcover.  Therefore,
\begin{eqnarray} \label{besic}
\sum_{j=1}^N {\bf 1}_{\Lambda_{\zeta_j}(s_j)} \le C_8 {\bf 1}_{\Lambda_{2\delta}(y)} \,.
  \end{eqnarray}
 For each $s \in S_\beta \cap Q_\delta(y)$, there is some $j \le N$ such that $s \in Q_{\zeta_j}(s_j)$, whence the cap $Q_{\eps/2}(s) $ is contained in  $Q_{2\zeta_j}(s_j)$.  Since the $\eps/2$ neighborhoods $Q_{\eps/2}(s) $ of anchors are disjoint,
  for some $C_9>0$ we have
\begin{eqnarray} \label{besic2}
C_9 |S_\beta \cap Q_\delta(y)| \eps^{d-1} \le  \mu\Bigl[\bigcup\{Q_{\eps/2}(s) \, : \, s \in S_\beta \cap Q_\delta(y)\}  \Bigr] \le
\mu\Bigl[\bigcup_{j=1}^N Q_{2\zeta_j}(s_j) \Bigr] \le
\sum_{j=1}^N \mu[Q_{2\zeta_j}(s_j)]    \, .
 \end{eqnarray}
 Now  by (\ref{eq:bad}), for each $j$ we have
 \begin{eqnarray} \label{besic3}
 \mu[Q_{2\zeta_j}(s_j)] \le C_{10} \zeta_j^{d-1} \le C_{10} \delta^{\beta} E(u,\Lambda_{\zeta_j}(s_j)) \,.
  \end{eqnarray}
 Thus by \eqref{besic2} and  \eqref{besic3},
\begin{eqnarray} \label{besic4}
 |S_\beta \cap Q_\delta(y)| \eps^{d-1} \le   \frac{C_{10}}{C_9} \delta^{\beta}\sum_{j=1}^N E(u,\Lambda_{\zeta_j}(s_j)) =  C_{11}\delta^{\beta} \int_{{\mathbb R}^d}\Bigl( |\nabla u|^p \sum_{j=1}^N  {\bf 1}_{\Lambda_{\zeta_j}(s_j)} \Bigr)
 \le C_{11} \delta^{\beta}C_8 E(u,\Lambda_{2\delta}(y))    \, ,
 \end{eqnarray}
where we have used (\ref{besic}) in the last step.
Combining (\ref{energy3}) and (\ref{besic4}) concludes the proof.
\end{proof}

\begin{lemma}\label{l:4}
 Let    $\beta:=\frac{p-1}{2p}$. Then there exist $\delta_0>0$ (which may depend on $d,p, \sigma$) and $C_{12},C_{13}>0$ (which  depend on $d,p$) such that for $\delta \in (0,\delta_0)$,  there exists $\eps_0>0$ with the following property. For each $\eps \in (0,\eps_0)$ and     $y\in \mathbb{S}^{d-1}$ such that $u=u^{\eps}$  satisfies
\begin{eqnarray}\label{eq4:1}
E(u,\Lambda_{2\delta}(y)) \le \delta^{-\beta/2} \delta^{d-1} \,,
\end{eqnarray}
there exists some   $A=A(y,\eps)$, such that
\begin{eqnarray}\label{eq4:5}
|u(x)-A| \le C_{12} \delta^\beta, \quad \forall x\in \Lambda_{\delta}(y):  |x|=1\pm\delta \,,
\end{eqnarray}
and
\begin{eqnarray}\label{eq4:2}
E(u,\Lambda_{\delta}(y)) \ge (1- 2\delta^{\beta/3}) E(u_{A},\Lambda_{\delta}(y))\,.
\end{eqnarray}
\end{lemma}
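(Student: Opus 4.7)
The strategy has two stages. First, I would exhibit the constant $A(y,\eps)$ by controlling the oscillation of $u$ on the two spherical caps $\Lambda_\delta(y)\cap\{|x|=1\pm\delta\}$ and linking them through a cored wedge $\Lambda_\eps^*(s)$ around a well-chosen good anchor $s$. Second, I would bound $E(u,\Lambda_\delta(y))$ from below by partitioning $\Lambda_\delta(y)$ into disjoint pieces---the balls $B(s,\eps/10)$ around good anchors (handled via a capacity comparison) and the exterior shell $\Lambda(Q_{\delta-O(\eps)}(y),1+\delta)$ (handled via Lemma \ref{l:holder})---and then matching the sum against the expansion \eqref{decomp-ener2} of $E(u_A,\Lambda_\delta(y))$.

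For \eqref{eq4:5}, Lemma \ref{l:3} with $M=\delta^{-\beta/2}$ gives $|S_\beta\cap Q_\delta(y)|\le C_7\delta^{\beta/2}(\delta/\eps)^{d-1}$, a vanishingly small fraction of the $\sim\sigma\mu(Q_\delta)\eps^{1-d}$ anchors in $Q_\delta(y)$, so good anchors are abundant in $Q_{\delta-\eps}(y)$. For a ball $B(z,5r/4)\subset\Lambda_{2\delta}(y)\setminus\Gamma$ with $r\asymp\delta$, hypothesis \eqref{eq4:1} combined with Lemma \ref{l:1} yields an oscillation bound of order $\delta^{(p-1)/p-\beta/(2p)}$; with $\beta=(p-1)/(2p)$, a short check shows this exponent exceeds $\beta$, so the oscillation is $O(\delta^\beta)$. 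Since each of the caps $\Lambda_\delta(y)\cap\{|x|=1\pm\delta\}$ sits at distance $\gtrsim\delta$ from $\Gamma$, each can be covered by finitely many such overlapping balls, and chaining produces constants $A_{\rm out}$, $A_{\rm in}$ with $|u-A_{\rm out}|\le C\delta^\beta$ on the outer cap and $|u-A_{\rm in}|\le C\delta^\beta$ on the inner cap. Fixing a good anchor $s\in Q_{\delta-\eps}(y)$ and applying Lemma \ref{l:2}, the oscillation on $\Lambda_\eps^*(s)$ is at most $C_5\delta^{(p-1-\beta)/p}\le C\delta^\beta$; since $\Lambda_\eps^*(s)$ meets both caps, $|A_{\rm out}-A_{\rm in}|\le C\delta^\beta$, and setting $A:=A_{\rm out}$ establishes \eqref{eq4:5}.

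For \eqref{eq4:2}, let $S^\circ$ denote the good anchors in $Q_{\delta-\eps/5}(y)$ and set $D_s:=B(s,\eps/10)$, $K_s':=s+\alpha\eps K_s$. The balls $D_s$ are pairwise disjoint, contained in $\Lambda_\delta(y)\cap\bar B(0,1+\eps/10)$, and disjoint from the exterior shell $\Lambda(Q_{\delta-2\eps}(y),1+\delta)\subset\Lambda_\delta(y)$. For each $s\in S^\circ$, I would first extend the oscillation bound of Lemma \ref{l:2} from $\Lambda_\eps^*(s)$ to all of $\partial D_s$ by chaining Lemma \ref{l:1} over overlapping balls of radius $\sim\eps$ in the cavity-free annulus $\{\eps/10\le|x-s|\le\eps/5\}$; combined with \eqref{eq4:5}, this gives $u\le A'':=A+C\delta^\beta$ on $\partial D_s$. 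Then, in the nontrivial case $A''<1$, the truncation $\tilde u:=\min\bigl(1,\max\bigl(0,(u-A'')/(1-A'')\bigr)\bigr)$ extended by $0$ outside $D_s$ is admissible for ${\rm cap}_p(K_s',D_s)=\kappa_s$, yielding $E(u,D_s\setminus K_s')\ge(1-A'')^p\kappa_s\ge(1-A)^p(1-C\delta^\beta)\kappa_s$. Summing over $S^\circ$ and noting that bad anchors contribute at most $O(\delta^{\beta/2})\sum_s\kappa_s$, and bounding the exterior energy from below by Lemma \ref{l:holder} with $R=1+\delta$, $Q=Q_{\delta-2\eps}(y)$, $\tilde A=(A-C\delta^\beta)_+$, one obtains $E(u,\Lambda_\delta(y))\ge(1-C(\delta^{\beta/2}+\eps/\delta))E(u_A,\Lambda_\delta(y))$ by comparison with \eqref{decomp-ener2}. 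Choosing $\eps_0$ small enough that $\eps/\delta<\delta^{\beta/3}$ and then $\delta_0$ small enough that $C(\delta^{\beta/2}+\delta^{\beta/3})\le 2\delta^{\beta/3}$ gives \eqref{eq4:2}.

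The main obstacle is the extension of the oscillation bound from $\Lambda_\eps^*(s)$ to the full sphere $\partial D_s$, which is essential for the capacity comparison on $D_s\setminus K_s'$. Lemma \ref{l:2} controls only the cored wedge, whereas a capacity-type lower bound requires uniform a priori control of $u$ on all of $\partial D_s$. The chaining argument in the annular region around $s$ provides this at the cost of only a dimension-dependent constant factor, using that the annulus is cavity-free (by $\eps$-separation of anchors) and that any two points on $\partial D_s$ can be linked by an overlapping chain of balls of radius $\sim\eps$ contained in this annulus.
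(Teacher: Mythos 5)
Your overall plan is the same as the paper's: obtain $A$ by combining the oscillation bounds on the two caps via a good anchor (Lemmas \ref{l:1}--\ref{l:3}), then lower-bound the energy by summing the interior capacity contribution over good anchors and the exterior contribution via Lemma \ref{l:holder}, and match against \eqref{decomp-ener2}. The oscillation estimate in the first stage is correct, and the overall accounting of bad anchors and boundary anchors matches the paper's.

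There is, however, a genuine gap in the second stage. You fix $A := A_{\mathrm{out}}$, deduce $u \le A'' := A + C\delta^\beta$ on $\partial D_s$, and then assert
\[
(1-A'')^p\ge(1-A)^p(1-C\delta^\beta).
\]
This inequality is simply false when $A$ is within $O(\delta^\beta)$ of $1$: e.g.\ if $1-A = 10C\delta^\beta$ then $(1-A'')^p/(1-A)^p = (9/10)^p$, which does not approach $1$ as $\delta\to 0$. Symmetrically, on the exterior side you take $\tilde A = (A-C\delta^\beta)_+$, which yields $\tilde A^p / A^p \to 0$ rather than $\to 1$ when $A = O(\delta^\beta)$. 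So with a single a~priori fixed $A$ you lose a bounded-below-by-a-constant factor at both ends of the interval, which is incompatible with the $(1-2\delta^{\beta/3})$ factor in \eqref{eq4:2}. The paper avoids this by choosing $A$ case-dependently: $A := A_1 = \min\{1, A_+ + C_{15}\delta^\beta\}$ when $A_+ > 1/2$ (so the interior capacity bound has \emph{zero} loss, and the exterior loss is controlled because $A$ is bounded away from $0$), and $A := \max\{0, A_+-C_{14}\delta^\beta\}$ when $A_+ \le 1/2$ (so the exterior bound has zero loss, and the interior loss is controlled because $1-A \ge 1/2$). This is precisely the content of inequality \eqref{compa} in the paper and the sentence deriving $u \ge \tilde A = (1-C_{20}\delta^\beta)A$ on the outer cap by ``considering separately the two cases.'' Your proposal could in principle be patched by a separate absorption argument (when $A$ is near $1$, the $(1-A)^p\sum\kappa_s$ term is $O(\delta^{p\beta}\delta^{d-1})$ and can be swallowed by the $\delta^{\beta/3}$ slack in the exterior bound; symmetrically when $A$ is near $0$), but that argument is not in your write-up, and the stated chain of inequalities does not hold as written.
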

\begin{proof}
 Since $\beta<(p-1)/2$, we have   $(p-1-\beta)/p>(p-1)/2p=\beta$. We will use this repeatedly below.
Also, recall  the following consequence of the definition \eqref{eq:2da3} of $u_A$ and \eqref{eq:musk}, that was already noted in \eqref{decomp-ener2}:
\begin{eqnarray} \label{decomp-ener2.5}
E(u_A, \Lambda_{\delta}(y))\le A^p (1+\eps)^{d-p}E(U, \Lambda_{\delta}(y) )+(1-A)^p \sum_{s \in S \cap Q_{\delta+\eps}(y)} \kappa_s \,,
\end{eqnarray}
where
\begin{eqnarray} \label{decomp-ener3.5}
\kappa_s:=  {\rm cap}_p\left(\alpha \eps K_s, B\left(s,\frac{\eps}{10}\right)\right) \,.
\end{eqnarray}

\medskip

First observe that under the assumption \eqref{eq4:1}, Lemma \ref{l:1} gives that
\begin{eqnarray}\label{eq4:3}
\underset{  \Lambda_{\delta}(y)\cap \partial B(0,1+\delta)} {\rm osc}~ u \le C_1 \delta^\frac{p-1-\beta}{p} \le C_1\delta^{\beta} \,.
\end{eqnarray}

 Similarly,
\begin{eqnarray}\label{eq4:31}
\underset{  \Lambda_{\delta}(y)\cap \partial B(0,1-\delta)} {\rm osc}~ u \le   C_1\delta^{\beta} \,.
\end{eqnarray}

  Hence,
\begin{eqnarray}\label{eq4:4}
u(x)=A_{\pm}+ O(\delta^\beta), \quad \forall x\in \Lambda_{\delta}(y):  |x|=1\pm\delta,
\end{eqnarray}
where $A_+=A_+(y)$ and $A_-=A_-(y)$ are two constants.
In view of Lemma \ref{l:3}, if $\delta_0>0$ is small enough and $\delta \in (0,\delta_0)$, then for sufficiently small $\eps>0$, there exists a good anchor in $Q_{\delta}(y)$, and therefore by Lemma \ref{l:2} and \eqref{eq4:4}, we have $|A_+-A_-|\le C_{13}\delta^\beta.$
Thus
\begin{eqnarray}\label{eq4:51}
|u(x)-A_+| \le C_{14} \delta^\beta, \quad \forall x\in \Lambda_{\delta}(y):  |x|=1\pm\delta \,.
\end{eqnarray}

\medskip
Denote by $Q_{\delta-\eps}^{\sharp}(y)=(S\setminus S_\beta) \cap Q_{\delta-\eps}(y)$   the set of good anchors in $Q_{\delta-\eps}(y).$
 Lemma \ref{l:2} and \eqref{eq4:5} imply that there exists $C_{15}$ such that for each   $s\in Q_{\delta-\eps}^{\sharp}(y)$,
\begin{eqnarray}\label{eq4:6}
u(x) \le A_1:= \min\bigl\{1,A_{+}+C_{15} \delta^\beta \bigr\}  \,, \quad \forall x\in \partial B\left(s,\frac{\eps}{10}\right) \,.
\end{eqnarray}
Define
$$
A:=A_1  \; \mbox{ \rm if } \;  A_+>1/2 \; \mbox{ \rm and } \; A:=\max\{0,A_{+}- C_{14}\delta^\beta \} \; \mbox{ \rm if } \;  A_+ \le 1/2 \,.
$$
We assume that $\delta$ is small enough to ensure that  $C_{15} \delta^\beta <1/4$, so $A=1$ iff $A_1=1$.
Also, note that $A_1 \le A +(C_{14}+C_{15})\delta^\beta$, so $ 1-A_1 \ge (1-A)\bigl(1-2 (C_{14}+C_{15})\delta^\beta\bigr)$ if  $A_+ \le 1/2$. Thus the inequality
\begin{eqnarray}\label{compa}
 (1-A_1)^p \ge (1-\delta^{\beta/3}) (1-A)^p
\end{eqnarray}
 holds for all values of $A_+$, provided that $\delta$ is small enough.

Now that $A$ has been chosen, our next goal will be to prove that if $\delta$ is small enough, then the inequality
\begin{eqnarray}\label{eq4:9}
E\left(u, \Lambda_{\delta}(y)\cap B(0,1+\delta) \right) \ge (1-2\delta^{\beta/3}) (1-A)^p \sum_{s \in S \cap Q_{\delta+\eps}(y)} \kappa_s
\end{eqnarray}
holds for  sufficiently small $\eps$. Note that if $A_1=1$ then also $A=1$,
 so the right-hand side of \eqref{eq4:9} vanishes, and the inequality certainly holds; thus we need only prove \eqref{eq4:9} when $A_1<1$.

  Given $s \in Q_{\delta-\eps}^{\sharp}(y)$, fix a $C^\infty$ cutoff function $\phi_s: B\left(s,\frac{\eps}{10}\right) \to [0,1]$ that
is identically 1 in $B(s,\alpha \eps)$ and vanishes outside $B(s,2\alpha \eps)$. Since $A_1<1$ and \eqref{eq4:6} holds, the function $u_s: B(s, \eps/{10}) \to [0,1]$ given by
$$u_s(x):=\max \Bigl\{0, \frac{u(x)-A_1}{1-A_1}\Bigr\} \,,
$$
   is in the admissible class
 $${\mathcal A}(s+\alpha \eps K_s, \, B(s, \eps/{10}), \, \phi_s)
 $$
  defined in \eqref{eq:bala-1}, by basic properties of $H_0^{1,p}$ (See \cite[Lemmas 1.23 and 1.26]{hkm}).  Thus, by \eqref{eq:bala0},
$$\forall s \in Q_{\delta-\eps}^{\sharp}(y), \qquad  \kappa_s\le E \Bigl(u_s, \, B\left(s,\frac{\eps}{10}\right)\Bigr)\le
(1-A_1)^{-p} E\Bigl(u, B\left(s,\frac{\eps}{10}\right)\Bigr) \,.$$

Therefore,
\begin{eqnarray}\label{eq4:8}
E\left(u,  \Lambda_{\delta}(y)\cap B(0,1+\delta) \right) \ge
\sum_{s\in  Q_{\delta-\eps}^{\sharp}(y)} E\left(u, B\left(s,\frac{\eps}{10}\right)\right)  \ge (1-A_1)^{p}\sum_{s\in  Q_{\delta-\eps}^{\sharp}(y)}    \kappa_s\, .
\end{eqnarray}

Next, we will compare the right-hand-sides of \eqref{eq4:8} and \eqref{eq4:9}.
The equidistribution hypothesis $({\rm  H_1})$ implies that for $\eps$ sufficiently small,
\begin{eqnarray}\label{eq:kid}
|S \cap Q_{\delta}(y)| \ge (1-\delta)\sigma \eps^{1-d} \mu( Q_{\delta}) \ge C_{17} \sigma  (\delta/\eps)^{d-1} \,.
\end{eqnarray}
 Invoking Lemma \ref{l:3} with $M=\delta^{-\beta/2}$ yields that
\begin{eqnarray}\label{eq:kid1}
|S_\beta \cap Q_{\delta}(y)| \le C_7  \delta^{\beta/2} (\delta/\eps)^{d-1}\,.
\end{eqnarray}
The $\mu$-measure of  the shell $Q_{\delta+2\eps}(y) \setminus  Q_{\delta-2\eps}(y)$ is  $O(\eps\delta^{d-2})$, so
 the number of anchors in the smaller shell  $Q_{\delta+\eps}(y) \setminus  Q_{\delta-\eps}(y)$ is
  $$O(\delta/\eps)^{d-2} \le C_7\delta^{\beta/2}(\delta/\eps)^{d-1} $$
(since caps of radius $\eps/2$ around these anchors are pairwise disjoint, and contained in the larger shell.)   Thus for small enough $\eps$,
\begin{eqnarray}\label{eq:kid3}
 \Bigl|\Bigl(S \cap  Q_{\delta+\eps}(y)\Bigr) \setminus  Q_{\delta-\eps}^{\sharp}(y)\Bigr|
 &\le & \Bigl| S_\beta \cap Q_\delta(y)\Bigr| + \Bigl|S \cap \bigl(Q_{\delta+\eps}(y) \setminus  Q_{\delta-\eps}(y)\bigr)\Bigr| \\ \nonumber
  &\le& 2C_7  \delta^{\beta/2} (\delta/\eps)^{d-1} \le  \frac{C_{17}}{2} \sigma\delta^{\beta/3}  (\delta/\eps)^{d-1} \,,
 \end{eqnarray}
 where the rightmost inequality assumes that  $\delta $ is small enough so that $2C_7\delta^{\beta/6} \le \frac{C_{17}}{2} \sigma$.

Lemma \ref{limcaprate} (see also \eqref{eq:separ1}) implies that if $\eps>0$ is small enough, then  for every anchor $s$,
\begin{eqnarray}\label{eq4:85}
   {\rm cap}_p  (\alpha \eps K) \le  \kappa_s   \le 2 {\rm cap}_p  (\alpha \eps K)  \,.
\end{eqnarray}

By comparing  \eqref{eq:kid3} to \eqref{eq:kid}, and using \eqref{eq4:85}, we obtain that
\begin{eqnarray}\label{eq:kid4}
\sum \Bigl\{ \kappa_s \,: \, s\in \Bigl(S \cap  Q_{\delta+\eps}(y)\Bigr) \setminus  Q_{\delta-\eps}^{\sharp}(y)\Bigr\} \le
 \delta^{\beta/3}\sum \Bigl\{ \kappa_s \,: \, s\in S \cap  Q_{\delta+\eps}(y)  \Bigr\} \,,
 \end{eqnarray}
 or equivalently,
 \begin{eqnarray}\label{eq:kid5}
\sum \Bigl\{ \kappa_s \,: \, s\in   Q_{\delta-\eps}^{\sharp}(y)\Bigr\} \ge
 \Bigl(1-\delta^{\beta/3}\Bigr)\sum \Bigl\{ \kappa_s \,: \, s\in  S \cap  Q_{\delta+\eps}(y)   \Bigr\} \,.
 \end{eqnarray}

Combining this inequality with  \eqref{eq4:8} and \eqref{compa}, we have established that \eqref{eq4:9} holds.

\medskip
Recall from \eqref{eq:2} that   $u(x)\to 0$ as $|x|\to\infty$ and recall that $U_R(x)=U(x/R)$ is identically 1 for $ x \in B(0,R)$.
By considering separately the two cases $A_+>1/2$ and $A_+ \le 1/2$, we infer from \eqref{eq4:51} that
 $$ \forall x \in \Lambda_{\delta}(y) \cap \partial B(0,1+\delta), \qquad u(x) \ge  \tilde{A}:=(1-C_{20}\delta^\beta)A \,.
 $$
  Thus,   Lemma \ref{l:holder} implies that
\begin{eqnarray}\label{eq4:10}
E\left(u,\{x\in  \Lambda_{\delta}(y): ~ |x|\ge 1+\delta \} \right) \ge
\tilde{A}^p  E(U_{1+\delta},\Lambda_{\delta}(y)) = (1+\delta)^{d-p}\tilde{A}^p  E(U,  \Lambda_{\delta}(y)) \,.
\end{eqnarray}
 For sufficiently small $\delta$, we have  $(1-C_{20}\delta^\beta)^p \ge 1-2\delta^{\beta/3}$, so if $\epsilon$ is small enough, then
\begin{eqnarray}\label{eq4:11}
E\left(u, \{x\in  \Lambda_{\delta}(y): ~ |x|\ge 1+\delta \}\right)
\ge (1-\delta^{\beta/3}) A^p E(U, \Lambda_{\delta}(y)) \,.
\end{eqnarray}
 Combining \eqref{eq4:9} and \eqref{eq4:11}, we obtain by \eqref{decomp-ener2.5} that \eqref{eq4:2} holds. This completes the proof.

\end{proof}

\section{Asymptotics for $u^\eps$ in the bulk: Proof of Theorem \ref{t:1} } \label{proofthm1}
In this section, we first use the lower bound on the energy of $u$ in cones, obtained in Lemma \ref{l:4}, in conjunction with $u$ minimizing energy globally, to deduce an upper bound for the energy of $u$ in all cones. This will imply that $u$ is close to $u_{A^*}$ on $\partial B(0,1+\delta)$, from which  Theorem \ref{t:1}  will follow easily.
\begin{lemma}\label{l:5}
Let $\beta=\frac{p-1}{2p}$ as in Definition \ref{defbad}.
There exists $ \delta_0>0$ (that may depend on $d,p,\sigma$) and $C_{21}>0$ (that may depend on $d,p$) such that for all
  $\delta \in (0,\delta_0/2)$, points $z\in \mathbb{S}^{d-1}$  and $m>1$, we have
\begin{eqnarray}\label{eq:lem5}
E(u,\Lambda_{\delta}(z))\le  E(u_{A_*},\Lambda_{\delta}(z)) +C_{21}{\delta^{m\beta/3}} \,,
\end{eqnarray}
provided that $\eps$ is sufficiently small. Consequently, there exists $\theta=\theta(d,p)>0$ such that for $\delta \in (0,\delta_0/2)$ and $\eps$   sufficiently small,
\begin{eqnarray}\label{eq:lem5const}
 |u(x)-A_*| \le C_{\#} \delta^{\theta}   \; \;  \mbox{\rm for all} \; \; x\in \R^d \;\; \mbox{\rm such that} \; \; |x|=1\pm \delta \,,
\end{eqnarray}
where $C_{\#}$  does not depend on $\eps,\delta$ (but may depend on $d,p,\tau,\sigma,{\rm cap}_p(K)$).
\end{lemma}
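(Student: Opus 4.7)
The plan is to combine the global energy minimization $E(u)\le E(u_{A_*})$ from Remark~\ref{rem:adm} with the cone-by-cone lower bound in Lemma~\ref{l:4} and the strict convexity of $\varphi_\tau$ at its unique minimizer $A_*$, which gives a quadratic bound
$$\varphi_\tau(A)-\varphi_\tau(A_*)\ge c_0(A-A_*)^2\quad\text{for all }A\in[0,1].$$
By turning the global upper bound on $E(u)$ into a near-matching local upper bound on $\Lambda_\delta(z)$, I will establish \eqref{eq:lem5}, and then convert it into the pointwise estimate \eqref{eq:lem5const} by reapplying Lemma~\ref{l:4} at $z$ itself.

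\textbf{Main steps.} First, cover $\mathbb{S}^{d-1}$ by an essentially disjoint Besicovitch family of spherical caps $\{Q_\delta(z_i)\}$ with $z_0=z$. Second, classify each $z_i$ as \emph{good} or \emph{bad} according to whether the hypothesis \eqref{eq4:1} of Lemma~\ref{l:4} holds at $z_i$; since $E(u)\le E(u_{A_*})=O(1)$ (cf.\ Corollary~\ref{cor:en} and \eqref{limener}), a Hardy--Littlewood-type estimate identical in spirit to Lemma~\ref{l:3} gives that bad caps have total spherical mass $O(\delta^{\beta/2})$. Third, on each good $z_i$, Lemma~\ref{l:4} supplies $A(z_i)\in[0,1]$ with $E(u,\Lambda_\delta(z_i))\ge(1-2\delta^{\beta/3})E(u_{A(z_i)},\Lambda_\delta(z_i))$; Corollary~\ref{cor:en} together with the convexity bound then gives
$$E(u,\Lambda_\delta(z_i)) \ge E(u_{A_*},\Lambda_\delta(z_i)) + c_0\mu(Q_\delta(z_i))(A(z_i)-A_*)^2 - O(\delta^{\beta/3})\mu(Q_\delta(z_i)).$$
Summing over good $i\ne 0$, using disjointness and $E(u)\le E(u_{A_*})$, and absorbing the $O(\delta^{\beta/2})$ bad-cap contribution to $E(u_{A_*})$, yields
$$E(u,\Lambda_\delta(z)) \le E(u_{A_*},\Lambda_\delta(z)) + C\delta^{\beta/3} - c_0\!\!\sum_{i\ne 0,\text{good}}\!\!\mu(Q_\delta(z_i))(A(z_i)-A_*)^2,$$
which is already \eqref{eq:lem5} at $m=1$, with the byproduct $L^2$-bound $\sum_i\mu(Q_\delta(z_i))(A(z_i)-A_*)^2\lesssim\delta^{\beta/3}$. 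To strengthen to arbitrary $m>1$, the plan is to iterate: feed this $L^2$-bound back into the convexity comparison, so that on a good cap where $|A(z_i)-A_*|$ is already known to be small, the $O(\delta^{\beta/3})$ loss is replaced by something quadratically smaller; after finitely many passes one obtains any prescribed exponent $m\beta/3$.

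\textbf{From \eqref{eq:lem5} to \eqref{eq:lem5const}, and the main obstacle.} For any $z$, the bad-cap mass bound forces $Q_\delta(z)$ itself to be good once $\delta$ is small enough, so Lemma~\ref{l:4} applied at $z$ produces $A=A(z,\eps)$ with $|u-A|\le C_{12}\delta^\beta$ on $\Lambda_\delta(z)\cap\{|x|=1\pm\delta\}$ and $E(u,\Lambda_\delta(z))\ge(1-2\delta^{\beta/3})E(u_A,\Lambda_\delta(z))$. Combining this with \eqref{eq:lem5} (for $m$ chosen large enough that $m\beta/3>d-1$) and the convexity inequality gives $c_0\mu(Q_\delta)(A-A_*)^2\le C\delta^{m\beta/3}$, hence $|A-A_*|\le C\delta^{(m\beta/3-d+1)/2}$; the triangle inequality with the $\delta^\beta$ oscillation estimate then yields \eqref{eq:lem5const} with $\theta=\min(\beta,(m\beta/3-d+1)/2)$. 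The principal difficulty is precisely the bootstrap from the naive $\beta/3$ exponent to arbitrary $m\beta/3$: the one-shot argument saturates because the factor $(1-2\delta^{\beta/3})$ coming from Lemma~\ref{l:4} costs exactly $O(\delta^{\beta/3})$ when summed against the global energy, and without the iterative improvement the pointwise conclusion \eqref{eq:lem5const} (which requires $m\beta/3>d-1$) would be vacuous for $d\ge 3$. The boundary cases $\tau\in\{0,\infty\}$ require minor modifications in which the quadratic convexity is replaced by the appropriate monotonicity/blow-up statements already available in Corollary~\ref{cor:en}.
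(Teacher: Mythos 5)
There is a genuine gap, and it is exactly at the step you flag as "the principal difficulty." Your one-shot Besicovitch argument can only produce the exponent $\beta/3$ (your $m=1$ case), and your proposed iteration does not improve it. The $O(\delta^{\beta/3})$ loss on each good cap is the additive quantity $2\delta^{\beta/3}E(u_{A(z_i)},\Lambda_\delta(z_i))$, which is of size $\Theta(\delta^{\beta/3}\mu(Q_\delta(z_i)))$ because $E(u_{A},\Lambda_\delta(z_i))\approx\varphi_\tau(A)\mu(Q_\delta(z_i))$ and $\varphi_\tau$ is bounded below by a positive constant on $[0,1]$ when $\tau\in(0,\infty)$. This loss is measured relative to the total local energy, not the excess $E(u_{A})-E(u_{A_*})$, so learning that $|A(z_i)-A_*|$ is small does not shrink it at all — the prefactor $(1-2\delta^{\beta/3})$ in Lemma~\ref{l:4} is fixed by the scale at which the lemma is applied, not by how close $A$ is to $A_*$. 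Feeding the $L^2$ information back in therefore yields the same $\delta^{\beta/3}$ loss on the next pass, and the bootstrap stalls at $m=1$, which (as you correctly observe) makes \eqref{eq:lem5const} vacuous for $d\ge 3$.

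The idea the paper actually uses, and that is missing from your proposal, is a two-scale argument: it covers the sphere by caps at the \emph{finer} scale $\delta^m$ and compares via an exact Fubini identity (not a Besicovitch cover) with the cone $\Lambda_\delta(z)$ at scale $\delta$. Applying Lemma~\ref{l:4} and \eqref{eq:bady3} at scale $\delta^m$ produces a loss factor $(1-2\delta^{m\beta/3})$, and the exceptional set now has mass $O(\delta^{m\beta/2})$, both of which are as small as you like after choosing $m$. Fubini turns these into a lower bound for $E(u,\Omega\setminus\Lambda_\delta(z))$ with error $O(\delta^{m\beta/3})$, and combining with $E(u)\le E(u_{A_*})$ gives \eqref{eq:lem5}. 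Note also that the Fubini identity
$\int_{\mathbb{S}^{d-1}} E(u,\Lambda_\rho(y))\,d\mu(y)=\mu(Q_\rho)E(u)$ has no overlap constant, whereas a Besicovitch cover at a fixed scale has bounded but nontrivial multiplicity; that overlap constant would destroy the delicate cancellation needed to compare $E(u)$ with $E(u_{A_*})$ cap by cap. So even the $m=1$ version of your argument needs to be reorganized around integral averaging rather than a finite covering. The rest of your outline (using Lemma~\ref{l:4} at $z$ to produce $A(z)$, then pinning $A(z)$ to $A_*$ via convexity and the already-proved bound \eqref{eq:lem5}) matches the paper, modulo the fact that the paper uses the weaker convexity bound $\varphi_\tau(A)-\varphi_\tau(A_*)\ge c_\#|A-A_*|^{2p}$ rather than a quadratic one, which only changes the exponent $\theta$.
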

As in the previous section, the constants $C_i$ in the proof only depend on $d,p$.

\begin{proof}
Let $\Omega={\mathbb R}^d \setminus \Gamma$. Observe that for $x \in  {\mathbb R}^d$  and  $y \in {\mathbb S}^{d-1}$, we have
 $$x \in \Lambda_{2 \delta}(y)  	\Leftrightarrow y \in Q_{2\delta}(x/|x|) \,.
 $$
 Therefore, by Fubini,
\begin{eqnarray}\label{fub1}
\underset{{\mathbb S}^{d-1}}\int  E(u, \Lambda_{2 \delta}(y)) \, d\mu(y)= \underset{{\mathbb S}^{d-1}}\int \,\, \underset{\Omega \cap \Lambda_{2 \delta}(y)} \int |\nabla u(x)|^p \, dx \, d\mu(y)=
\underset{\Omega}\int  \underset{Q_{2\delta}(x/|x|)}\int \!\!\!  |\nabla u(x)|^p \, d\mu(y) \, dx  =\mu(Q_{2\delta})  E(u) \,.
\end{eqnarray}
Define
\begin{eqnarray}\label{eq:bady}
 Y_\delta=\Bigl\{y \in {\mathbb S}^{d-1} \,:\, E(u, \Lambda_{2 \delta}(y))>\delta^{-\beta/2} \delta^{d-1} \Bigr\} \,.
\end{eqnarray}
Since $E(u) ={\rm cap}_p(\Gamma) \le {\rm cap}_p(\bar B(0,2))= C_{22}$ and $\mu(Q_{2\delta}) =O(\delta^{d-1})$,  equation \eqref{fub1} implies that
\begin{eqnarray}\label{eq:bady2}
 \mu(Y_\delta)  \le C_{23} \delta^{\beta/2} \,.
\end{eqnarray}
Denote $\chi_m(y)=E(u_{A_*}, \Lambda_{\delta^m}(y))$ and observe that if $\delta^m<\delta_0(d,p,\sigma)$,  then Lemma \ref{l:4} and  \eqref{eq:bady3}
(applied to $\delta^m$ in place of $\delta$)
imply that for sufficiently small $\eps$,
\begin{eqnarray}\label{eq:bady3.5}
E(u, \Lambda_{  \delta^m}(y)) \ge (1- 2\delta^{m\beta/3}) \chi_m(y)  \; \; \mbox{\rm for} \;\;  y \in {\mathbb S}^{d-1} \setminus Y_{\delta^m} \,.
\end{eqnarray}

Next, note that for  $z \in {\mathbb S}^{d-1}$ and   $y \in {\mathbb S}^{d-1} \setminus Q_{\delta+\delta^m}(z)$, we have
$$ x \in \Lambda_{\delta^m}(y) \, \Rightarrow \, \Bigl\{x \in  {\mathbb R}^d \setminus \Lambda_\delta(z) \; \; {\rm and} \;\; y \in Q_{\delta^m}(x/|x|) \Bigr\} \,.
$$
Therefore, by Fubini
\begin{eqnarray}\label{fub2}
\!\!\! \! \underset{{\mathbb S}^{d-1} \setminus Q_{\delta+\delta^m}(z)}  \int \!\! \! \! \! \!\!\! \! E(u, \Lambda_{  \delta^m}(y)) \, d\mu(y)
 = \!\!\! \! \! \underset{{\mathbb S}^{d-1} \setminus Q_{\delta+\delta^m}(z)} \int \!\! \!  \!\int_{\Omega \cap \Lambda_{  \delta^m}(y)} \!\!
  |\nabla u(x)|^p \, dx \, d\mu(y)
  \le \!\!\! \! \!\!\! \! \underset{\Omega \setminus \Lambda_\delta(z)} \int \!\!\! \! \int_{Q_{\delta^m}(x/|x|)} \! \! \! |\nabla u(x)|^p \, d\mu(y) \, dx   \,.
\end{eqnarray}
The right-hand side factors as $\mu(Q_{\delta^m}) E(u, \Omega \setminus \Lambda_\delta(z)) $,  so reversing the order of expressions gives
\begin{eqnarray}\label{fub2.5}
\mu(Q_{\delta^m}) E(u, \Omega \setminus \Lambda_\delta(z)) \ge \!\! \underset{{\mathbb S}^{d-1} \setminus Q_{\delta+\delta^m}(z)}   \int E(u, \Lambda_{  \delta^m}(y)) \, d\mu(y)
 \ge \!\! \underset{{\mathbb S}^{d-1} \setminus [Q_{\delta+\delta^m}(z) \cup   Y_{\delta^m}]} \int (1-2\delta^{m\beta/3}) \chi_m(y) \, d\mu(y) \,,
\end{eqnarray}
where  we have used \eqref{eq:bady3} in the last step. By Fubini,
\begin{eqnarray}\label{fub3}
\int_{{\mathbb S}^{d-1}}  \chi_m(y) \, d\mu(y) =\mu(Q_{\delta^m})  E(u_{A_*}) \,
\end{eqnarray}
and
\begin{eqnarray}\label{fub4}
\int_{ Q_{\delta-\delta^m}(z) }  \chi_m(y) \, d\mu(y)  \le \mu(Q_{\delta^m})  E(u_{A_*}, \Lambda_\delta(z)) \,.
\end{eqnarray}
Write $Y^+=Y_{\delta^m} \cup \Bigl(Q_{\delta+\delta^m}(z) \setminus Q_{\delta-\delta^m}(z)\Bigr)$, so that for small $\delta$,
\begin{eqnarray}\label{fub5}
\mu(Y^+) \le C_{23}\delta^{m\beta/2}+C_{24}\delta^{d-2+m}\le 2C_{23}\delta^{m\beta/2} \,.
\end{eqnarray}
Now \eqref{c13b} implies the bound $\chi_m(y) \le C_{25}\mu(Q_{\delta^m})$ for $\eps$ small enough; together with \eqref{fub5}, it gives
\begin{eqnarray}\label{fub6}
\int_{ Y^+ }  \chi_m(y) \, d\mu(y) \le C_{26} \mu(Q_{\delta^m}) \delta^{m\beta/2}  \,.
\end{eqnarray}
Subtracting \eqref{fub4} and \eqref{fub6} from \eqref{fub3} yields
\begin{eqnarray}\label{fub7}
\underset{{\mathbb S}^{d-1} \setminus [Q_{\delta+\delta^m}(z) \cup   Y_{\delta^m}]} \int   \chi_m(y) \, d\mu(y)
\ge   \mu(Q_{\delta^m}) \Bigl[  E(u_{A_*}, {\mathbb R}^d \setminus \Lambda_\delta(z)) -C_{26}\delta^{m\beta/2} \Bigr] \,,
\end{eqnarray}
whence by \eqref{fub2.5},
\begin{eqnarray}\label{fub8}
  E(u, \Omega  \setminus \Lambda_\delta(z)) \ge
   (1-2\delta^{m\beta/3})  \Bigl[  E(u_{A_*}, {\mathbb R}^d \setminus \Lambda_\delta(z)) -C_{26}\delta^{m\beta/2} \Bigr] \,.
\end{eqnarray}

Thus,
\begin{eqnarray}\label{fub8.5} E(u_{A_*}) \ge  E(u) &=& E(u,  \Lambda_\delta(z)) +E(u, \Omega \setminus \Lambda_\delta(z)) \\ \nonumber
&\ge& E(u,  \Lambda_\delta(z))+
 (1-2\delta^{m\beta/3})  \Bigl[  E(u_{A_*}, \Omega \setminus \Lambda_\delta(z)) - C_{26}\delta^{m\beta/2} \Bigr] \,.
\end{eqnarray}
Rearranging terms and using that there exists $C_{27}$ such that if $\eps$ is small, then  $E(u_{A_*}) \le C_{27}$, we conclude that
 \begin{eqnarray}\label{fub9}
  E(u,  \Lambda_\delta(z)) \le  E(u_{A_*},  \Lambda_\delta(z)) + 2C_{27}\delta^{m\beta/3}\,,
\end{eqnarray}
provided $\delta <\delta_0$ and $\eps<\eps_0(\delta)$ are small enough. Applying this inequality with $2\delta$ in place of $\delta$, the hypothesis of
 Lemma \ref{l:4} is satisfied,
  provided $m$ is chosen to satisfy  $m\beta/3>d$.   Therefore, by \eqref{eq4:5},  we have    that for  sufficiently small $\eps$,
  \begin{eqnarray}\label{fub9.5}
  |u(x)-A| \le C_{12} \delta^\beta  \;\; \mbox{ \rm for all} \;\; x \in \Lambda_\delta(z) \;\; \mbox{\rm such that} \;\; |x|=1\pm \delta \,,
  \end{eqnarray}
 where $A=A(z,\eps)$. By \eqref{eq4:2} and \eqref{fub9},
 \begin{eqnarray}\label{fub9.7}
    (1- 2\delta^{\beta/3}) E(u_{A},\Lambda_{\delta}(z))\, \le \, E(u_{A_*},  \Lambda_\delta(z)) + 2C_{27}\delta^{d}\,,
\end{eqnarray}
 since  $m\beta/3>d$. Now we separate cases.

 \medskip

 \noindent{\bf Case 1.} If $\tau \in (0,\infty)$, then \eqref{fub9.7} and Corollary \ref{cor:en} yield, for sufficiently  small $\eps>0$, that
  \begin{eqnarray}\label{fub10}
   (1-3 \delta^{\beta/3}) \mu(Q_\delta) \varphi_\tau(A)   \le \mu(Q_\delta) \varphi_\tau(A_*)+ 2C_{27}\delta^{d}\,.
\end{eqnarray}
Thus, since $\varphi_\tau(A_*) \le C$, we infer that
$$C_{28}\delta^{\beta/3}  \ge \varphi_\tau(A)-\varphi_\tau(A_*) \ge c_{\#}|A-A_*|^{2p} \,,
$$
where in the right-hand inequality,  $c_{\#}=c_{\#}(d,p,\tau,\sigma,{\rm cap}_p(K))>0$.  Therefore,
$$|A-A_*|\le (C_{28}/c_{\#})^{1/(2p)} \cdot \delta^{\beta/(6p)} \,.
$$
 In conjunction with \eqref{fub9.5}, this yields the final claim of the lemma.

 \medskip

 \noindent{\bf Case 2.} If $\tau =0$, then $A_*=0$, so $E(u_{A_*}) \to 0$ as $\eps \to 0$ by \eqref{limener}. Since
   $$   E(u_{A},  \Lambda_\delta(z))\ge C_{29}A^p \delta^{d-1} $$
by \eqref{eqh:6}, we infer from \eqref{fub9.7} that
  $|A-A_*|=A =O(\delta^{1/p})$ for small $\eps$.

 \medskip

  \noindent{\bf Case 3.} If $\tau =\infty$, then $A_*=1$ and
  \begin{eqnarray}\label{fubnew}
    E(u_{A_*},  \Lambda_\delta(z)) = E((1+\eps)^{ \gamma} U,  \Lambda_\delta(z)) \le C_{30} \delta^{d-1}
\end{eqnarray}
by \eqref{eqh:6}.
  On the other hand, for $A<1$,
   by \eqref{decomp-ener2} and the definition of $\kappa_s$, for  any $\eps>0$ small enough we have
 \begin{eqnarray}\label{fubnew1} \frac{E(u_{A},\Lambda_{\delta}(z))}{(1-A)^p} \ge  \sum_{s \in S \cap Q_{\delta-\eps}(y)} \kappa_s \ge
      C_{31} \sigma (\delta/\eps)^{d-1} (\alpha \eps)^{d-p}{\rm cap}_p(K)\ge
  c_0 \delta^{d-1} (\alpha/\alpha_c)^{d-p}
  \end{eqnarray}
  for some $c_0=c_0(d,p,K,\sigma)>0$. The right-hand side of \eqref{fubnew1} tends to $\infty$ as $\eps \downarrow 0$,   so by \eqref{fub9.7}, we have
  $|A-A_*|=1-A \le \delta$ for small enough $\eps$.
\end{proof}

\begin{proof}[Proof of Theorem \ref{t:1}]
  By Lemma \ref{l:5}, if $\delta<\delta_0(d,p,\sigma)$, then for sufficiently small $\eps$, we have
  $$ \forall x \in \partial B(0,1+\delta) \cup \partial B(0,1-\delta), \qquad  A_*-C_{\#} \delta^\theta  \le u(x) \le A_*+C_{\#} \delta^\theta  \,,
  $$
  where $\theta, C_{\#}$ do not depend on $\eps,\delta$.
The comparison principle (Lemma  \ref{l:comp}) then implies that for sufficiently small $\eps$,
\begin{eqnarray}\label{bulk1}
 \forall x \in \R^d \setminus B(0,1+\delta), \qquad  (A_*-C_{\#} \delta^\theta) U_{1+\delta}(x) \le u(x) \le (A_*+C_{\#} \delta^\theta)U_{1+\delta}(x)  \,.
 \end{eqnarray}
 and
 \begin{eqnarray}\label{bulk2}
 \forall x \in   B(0,1-\delta), \qquad  A_*-C_{\#} \delta^\theta  \le u(x) \le A_*+C_{\#} \delta^\theta    \,.
 \end{eqnarray}
 (For the latter conclusion, the maximum principle suffices.)
 Since $\delta>0$ can be chosen arbitrarily small, this concludes the proof of uniform convergence on compact subsets of $\R^d \setminus \partial B(0,1)$.
\end{proof}

\section{Separation theorem} \label{necksec}

The oscillation and energy estimates we used in the previous section to prove Theorem \ref{t:1} are not sufficient to   establish Theorem \ref{main2}. For this purpose, we will need to bound $u$ closer to the cavities, which is the goal of this section.

Recall that  $\gamma = \frac{d-p}{p-1}$. In this section we prove some estimates that are valid for all parameters $\delta<\min\{1/2,  1/(2\gamma) \}$  and $0<\eps<\delta/10$.
 In particular, we do not assume the asymptotic equidistribution hypothesis $(H_1)$ in  \eqref{eq:4}, and instead of the asymptotic relation $(H_3)$ $\alpha(\eps) \eps^{-1/\gamma} \to \tau$ we just assume that for some $\tau_1 \in (0,\infty)$, we have
$\alpha <\min\{1/80, \tau_1 \eps^{1/\gamma} \} \,.  $

Let $S \subset \partial B(0,1)$ be a set of anchors, with Euclidean distance at least $\eps$ between any two anchors. The next theorem ensures that for $\alpha$ in the critical window and below it, the bumps in the equilibrium potential are separated.
\begin{theorem} \label{neckthm}
For $\zeta>0$, write    $\Omega_\zeta = B(0,1 +\delta) \setminus \bigl[ \cup_{s \in S} \bar{B}(s, \zeta \eps) \bigr]$.
Let $w: \bar{B}(0,1+\delta) \to [0,1]$ be the Perron solution of the  boundary value problem
\begin{eqnarray}
\left\{
\begin{array}{lll}
\Delta_p w = 0 & \mbox{in} &\Omega_{\alpha}, \\
w=0 & \mbox{on} & \partial B(0,1+\delta), \\
w=1 & \mbox{on}   &  \cup_{s\in S} \partial B(s,\alpha \eps) \,.
\end{array}
\right.
\end{eqnarray}
Then for some $C=C(d,p)$, we have
\begin{equation} \label {eq:neckth}
\sup_{z \in \Omega_{1/10}} |w(z)| \le C\tau_1^\gamma \delta \,.
\end{equation}
\end{theorem}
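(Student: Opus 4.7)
\textbf{Plan for the proof of Theorem \ref{neckthm}.}
The idea is to apply the comparison principle for $p$-harmonic versus $p$-superharmonic functions (\cite[Theorem 7.6]{hkm}) by constructing an explicit $p$-superharmonic barrier $W$ on $\Omega_\alpha$ such that $W \geq 1$ on each cavity boundary $\partial B(s,\alpha\eps)$, $W \geq 0$ on $\partial B(0,1+\delta)$, and $W \leq C\tau_1^\gamma \delta$ on $\Omega_{1/10}$. The theorem then follows immediately since $0 \le w \le W$ on $\Omega_\alpha$.

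The natural building blocks are: (i) the radial $p$-potential $U^\delta$ of the unit ball inside $B(0,1+\delta)$, which is $1$ on $\bar B(0,1)$ and decays linearly-in-$\gamma(1+\delta-|x|)/\delta$ across the shell $\{1<|x|<1+\delta\}$, and (ii) the global $p$-capacitary potentials $\Psi_s(x)=\min\{1,(\alpha\eps/|x-s|)^\gamma\}$, each of which is $p$-superharmonic in $\R^d$, equals $1$ on $\bar B(s,\alpha\eps)$, and satisfies $\Psi_s(x)\le(10\alpha)^\gamma\le 10^\gamma\tau_1^\gamma\eps$ for $|x-s|\ge\eps/10$. The main obstacle is the nonlinearity of $\Delta_p$: for $p\ne 2$, neither $\sum_s \Psi_s$ nor $\max_s \Psi_s$ is $p$-superharmonic, so the ``obvious'' supersolution is not available.

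I would get around this by arranging the local contributions \emph{piecewise}, exploiting the fact that the balls $B(s,\eps/2)$ are disjoint (by the $\eps$-separation hypothesis). Specifically, in each such ball I would define a radial profile $h_s$ and glue it to a small multiple of $U^\delta$ in the complement, choosing the match at $|x-s|=\eps/20$ so that the outward normal derivative \emph{decreases} across the interface (which is the correct sign for $p$-superharmonicity of the glued function, in contrast with the $p$-subharmonic ``bumps'' considered in Lemma \ref{l:2}-style constructions). Concretely, the construction has the form
\begin{equation*}
W(x)=A\,U^\delta(x)+\sum_{s\in S}h_s(x),\qquad h_s\ \text{supported in}\ B(s,\eps/2),
\end{equation*}
with $A\asymp \tau_1^\gamma\delta$ and each $h_s$ a $p$-harmonic condenser profile making up the deficit $1-A\,U^\delta$ at $\partial B(s,\alpha\eps)$. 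The disjointness of the supports of the $h_s$ ensures that at each point in $\Omega_\alpha$ at most one $h_s$ contributes, so $W$ is $p$-superharmonic in the bulk and inside each $B(s,\eps/2)\setminus\bar B(s,\alpha\eps)$; the sign of the interface contribution at $\partial B(s,\eps/2)$ is dictated by the profile of $h_s$.

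The hard part, and where I expect to spend the most care, is verifying that the glued function $W$ is genuinely $p$-superharmonic across the interfaces $\partial B(s,\eps/2)$, since the $p$-Laplacian is not additive and the ``correct'' sign of the interface jump is subtle when $p\ne 2$ (as already seen in Remark \ref{tech} and in the careful treatment of Perron solutions in Section 2). Once $W$ is in place, the pointwise bound $W(x)\le C\tau_1^\gamma\delta$ on $\Omega_{1/10}$ follows by estimating the two contributions separately: $A\,U^\delta(x)\le A\le C\tau_1^\gamma\delta$, and each $h_s(x)$ is at most a universal multiple of $(10\alpha)^\gamma\le 10^\gamma\tau_1^\gamma\eps\le C\tau_1^\gamma\delta$ on $\Omega_{1/10}$ (using $\eps<\delta/10$ and the single-cavity radial estimate), completing the proof.
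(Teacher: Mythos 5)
There is a genuine gap at the heart of the proposal, and it appears \emph{before} the gluing issue you flag. You write that disjointness of the supports of the $h_s$ ensures $W=A\,U^\delta+\sum_s h_s$ is $p$-superharmonic ``in the bulk and inside each $B(s,\eps/2)\setminus\bar B(s,\alpha\eps)$.'' Disjointness only ensures that inside each ball $W$ is a sum of \emph{two} nonzero terms, $A\,U^\delta$ and one $h_s$. For $p\neq 2$ the operator $\Delta_p$ is not additive, so the sum of a $p$-harmonic function ($A\,U^\delta$) and a $p$-superharmonic radial bump ($h_s$) has no reason to be $p$-superharmonic — this is precisely the nonlinearity obstruction you correctly identified for $\sum_s\Psi_s$, and it does not disappear by separating the supports of the $h_s$, because $A\,U^\delta$ is still present inside each $B(s,\eps/2)$. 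In addition, the gluing at $\partial B(s,\eps/20)$ cannot even be made continuous: $h_s$ is radial in $|x-s|$ and hence constant on that sphere, whereas $A\,U^\delta$ is radial in $|x|$ and genuinely varies there (by roughly $A\eps/\delta$). With a discontinuity, the only gluing device available for $p$-superharmonic functions is the pasting lemma (\cite[Lemma 7.9]{hkm}), which produces $\min\{u,v\}$ in the inner region; that is the wrong direction here, since near the cavity the minimum collapses to $A\,U^\delta\approx A\ll1$ and fails the boundary condition $W\geq 1$ on $\partial B(s,\alpha\eps)$. So the plan would need a very different construction, not just more care at the interfaces.

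The paper avoids constructing any global barrier. Lemma \ref{necklem} compares $w$ with explicit radial $p$-harmonic profiles $h_{r,R}$ in three \emph{separate} regions — the annulus $B(s,\eps/10)\setminus B(s,\alpha\eps)$ around each anchor, the outer shell $B(0,1+\delta)\setminus B(0,1+\eps/10)$, and a fixed scale-invariant ``cored half-ball'' $H_s$ — to derive three linear inequalities among the sup-values $D$, $F$, $G$ of $w$ on $\partial B(s,\eps/20)$, $\partial B(s,\eps/10)$, and $\partial B(0,1+\eps/5)$. The third comparison (Lemma \ref{necklem}(c)) is the crucial step your proposal has no analogue of: it produces a fixed constant $c_3\in(0,1)$ from a scale-free domain $H$, and this contraction is exactly what makes the algebraic combination of (a)--(c) close, yielding $F\le C\tau_1^\gamma\delta$ and hence the theorem via the maximum principle. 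This bootstrap-of-inequalities route sidesteps both the nonlinearity of $\Delta_p$ and the geometric mismatch that break the barrier construction.
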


\begin{figure}[h!]%
	\centering \includegraphics[width=4.in]{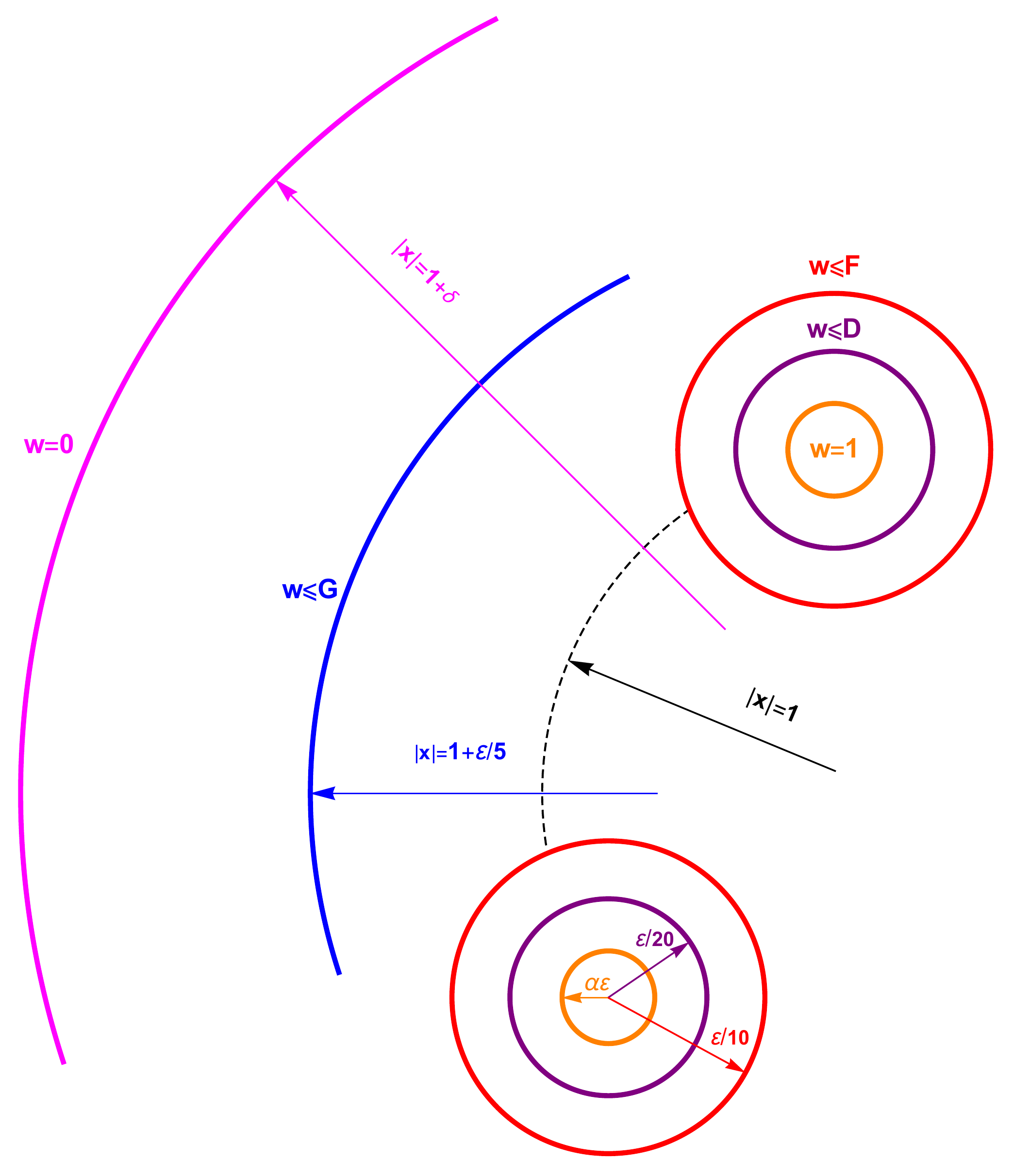}
	\label{fig:neck}
	\caption{The inner balls are centered at anchors on the unit sphere and have radius $\alpha \eps$; on their boundary, $w=1$. Each  of them is surrounded by  two concentric spheres  of radii $\eps/20$ and  $\eps/10$, respectively. All these spheres are contained in  larger spheres of radii $1+ \eps/5$ and $1+\delta$ centered at the origin.}
\end{figure}

The proof of Theorem \ref{neckthm} is based on the lemma below which requires the following notation:
\begin{align}
D & := \sup\Bigl\{ w(x) \mid   x \in \cup_{s \in S} \; \partial B\left (s, \frac{\eps}{20}\right) \Bigr\}\, ,\notag \\
F & := \sup\Bigl\{ w(x) \mid  x \in \cup_{s \in S} \; \partial B\left(s, \frac{\eps}{10}\right) \Bigr\} \, , \notag \\
G & := \sup\Bigl\{ w(x) \mid x \in \partial B\left(0 , 1 + \frac{\eps}{5}\right)\Bigl\} \,. \notag
\end{align}
We note here that by the maximum principle,
\begin{equation} \label{eq:boundviaF}
\sup_{z \in \Omega_{1/10}} w(z) \le F \,.
\end{equation}

\begin{lemma}\label{necklem}
There exist constants $c_1, c_2>0$ and $c_3 \in (0,1)$ that only depend on $d,p$,
such that
\begin{align}
(a) \qquad D & \leq F  + c_1 \tau_1^\gamma  \eps(1- F) \,, \label{D_ineq} \\
(b) \qquad G & \leq (1 - c_2 \eps/\delta)F \label{G_ineq} \,,\\
(c) \qquad F & \leq (1-c_3)D + c_3 G \label{F_ineq} \,.
\end{align}
\end{lemma}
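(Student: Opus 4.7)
My plan for (a) is to compare $w$ in the annulus $\mathcal{A}_s := B(s,\eps/10)\setminus \bar{B}(s,\alpha\eps)$ with the explicit radial $p$-harmonic interpolant. Since anchors are $\eps$-apart and $\alpha\le 1/80$, no other cavity enters $\mathcal{A}_s$, so $w$ is $p$-harmonic there with $w=1$ on the inner sphere and $w\le F$ on the outer. Setting
\begin{equation*}
  \tilde{V}(x) := F + (1-F)\,\frac{|x-s|^{-\gamma}-(\eps/10)^{-\gamma}}{(\alpha\eps)^{-\gamma}-(\eps/10)^{-\gamma}},
\end{equation*}
the comparison principle gives $w\le \tilde V$ in $\mathcal{A}_s$. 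Evaluating at $|x-s|=\eps/20$ produces the factor $(20^\gamma-10^\gamma)/(\alpha^{-\gamma}-10^\gamma)$. Since $\alpha\le\tau_1\eps^{1/\gamma}$ gives $\alpha^\gamma\le\tau_1^\gamma\eps$, and the combination of $\alpha\le 1/80$ with $\alpha^{-\gamma}\ge\tau_1^{-\gamma}\eps^{-1}$ keeps $\alpha^{-\gamma}-10^\gamma$ comparable to $\alpha^{-\gamma}$, this factor is at most $c_1\tau_1^\gamma\eps$, and (a) follows by taking the supremum over $s$ and $|x-s|=\eps/20$.

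For (b) I work in $\mathcal{R}_0 := B(0,1+\delta)\setminus\bigcup_{s\in S}\bar{B}(s,\eps/10)$, where $w$ is $p$-harmonic with $w\le F$ on every inner sphere and $w=0$ on $\partial B(0,1+\delta)$. The decisive geometric observation is that $\bar{B}(s,\eps/10)\subset \bar{B}(0,1+\eps/10)$ for every anchor $s$, by the triangle inequality applied to $|s|=1$. I build the radial $p$-superharmonic ``umbrella''
\begin{equation*}
  h(x):=\begin{cases}1, & |x|\le 1+\eps/10,\\[2pt] \dfrac{|x|^{-\gamma}-(1+\delta)^{-\gamma}}{(1+\eps/10)^{-\gamma}-(1+\delta)^{-\gamma}}, & 1+\eps/10\le |x|\le 1+\delta,\end{cases}
\end{equation*}
which is $p$-superharmonic thanks to the downward corner at $|x|=1+\eps/10$, and apply comparison to get $w\le Fh$ in $\mathcal{R}_0$. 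Evaluating at $|x|=1+\eps/5$ and using the mean value theorem for $r\mapsto r^{-\gamma}$ (whose derivatives $|h'(r)|$ are comparable at any two points of $[1,1+\delta]\subset[1,3/2]$) yields $h(1+\eps/5)\le 1-c_2\eps/\delta$, which gives (b).

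Part (c) is the most delicate and is where I expect the main obstacle. My first step is to reduce to the case $D\ge G$: if $D<G$, then the maximum principle in $\Omega''' := B(0,1+\eps/5)\setminus \bigcup_s\bar{B}(s,\eps/20)$ (where $w$ is $p$-harmonic, since $\alpha\eps<\eps/20$) forces $F\le G$, and combining this with the already-proved (b) gives $G\le(1-c_2\eps/\delta)G$, so $G=F=0$ and (c) is trivial. Assuming $D\ge G$, I let $\Psi$ be the Perron solution in $\Omega'''$ with $\Psi=D$ on the inner spheres and $\Psi=G$ on $\partial B(0,1+\eps/5)$; comparison gives $w\le\Psi$, and writing $\Psi = G+(D-G)w'$, with $w'$ the Perron solution with boundary values $1$ (inner) and $0$ (outer), reduces (c) to
\begin{equation*}
  v(z_0)\ge c_3, \qquad v:=1-w',
\end{equation*}
for every $z_0\in\partial B(s,\eps/10)$.

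The key point for the last step is that $v$ itself is nonnegative and $p$-harmonic (the $p$-Laplacian is invariant under $u\mapsto c-u$), with $v=1$ on $\partial B(0,1+\eps/5)$ and $v=0$ on the inner spheres. I plan to establish $v(z_0)\ge c_3$ by a Harnack chain: a radial comparison in the cavity-free shell $\{1+\eps/10<|x|<1+\eps/5\}$ (cavities lie in $\{|x|\le 1+\eps/20\}$) shows $v(x^*)\ge c_4>0$ at the midpoint $x^* := (1+3\eps/20)\hat{z}_0$, and then $z_0$ is connected to $x^*$ by a bounded number of overlapping balls of radius of order $\eps/40$ compactly contained in $\Omega'''$ (the worst constraint is the distance $\eps/20$ of $z_0$ from the $s$-cavity; when $z_0$ lies below the unit sphere one first routes tangentially around the $s$-cavity). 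Since the scale-invariant Harnack inequality for nonnegative $p$-harmonic functions gives a multiplicative loss depending only on $d,p$ per step, and the chain length is bounded after rescaling by $\eps$, this produces $c_3 = c_3(d,p)>0$. The hardest technical point to write rigorously is constructing the Harnack chain in a manner uniform in both the position of $z_0$ on $\partial B(s,\eps/10)$ and in the possible arrangement of the other anchors.
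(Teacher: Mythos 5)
Parts (a) and (b) follow the paper's argument essentially verbatim: compare $w$ against the explicit radial barrier $h_{r,R}$ (denoted $\nu_1,\nu_2$ in the paper) in a cavity-free annulus and evaluate at the intermediate radius. Your ``umbrella'' $h=\min\{1,h_{r,R}\}$ for (b) is a small cosmetic variant of the paper's use of the maximum principle to get $w\le F$ on $\partial B(0,1+\eps/10)$ before comparing with $F\nu_2$ in the spherical shell; both are fine. Your preliminary reduction to the case $D\ge G$ (via (b) forcing $F=G=0$ otherwise) is a sensible step that the paper actually elides when it writes $\sup\{G+(D-G)\psi\}=G+(D-G)\sup\psi$, which is only valid if $D\ge G$.

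Part (c) is where you genuinely diverge. The paper introduces a fixed, scale-invariant ``cored half-ball'' template $H=B(0,8)\setminus\bar B(4\xi,1)$ cut by a hyperplane, lets $\psi$ be the $p$-Perron solution on $H$ with $0$/$1$ boundary data, and obtains $c_3=1-\sup_{\partial B(4\xi,2)}\psi>0$ from the strong maximum principle and compactness; it then rescales and rotates $H$ to each anchor $s$ (with inner ball $\bar B(s,\eps/20)$, flat face tangent to $B(0,1+\eps/5)$ at $s_*=(1+\eps/5)s$, and outer radius $2\eps/5$) and applies one comparison. Your route instead passes to the complementary function $v=1-w'$ and aims to bound $v$ from below on $\partial B(s,\eps/10)$ by a Harnack chain joining $z_0$ to a shell point $x^*$ where a radial comparison gives $v\ge c_4$. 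This is in principle workable: after rescaling by $\eps$, the cavity radii, separation of anchors, and distance from $z_0$ to the nearest cavity are all of order one, so one expects chains of bounded length, and Harnack's inequality for nonnegative $p$-harmonic functions is scale invariant. But the argument is only sketched; the uniformity of the chain construction over all positions of $z_0$ (especially $z_0$ ``below'' the unit sphere, where the radial ray would pierce the $s$-cavity) and over all admissible anchor configurations is exactly the content you would have to write out. The paper's construction sidesteps this entirely: a single compact set $\partial B(4\xi,2)$ and one comparison produce a constant $c_3(d,p)$ with no chain bookkeeping. So: same strategy for (a),(b); a different (more elementary but more delicate) strategy for (c), with the key technical step (uniform chain of bounded length) acknowledged but not carried out.
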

\begin{proof}
Let $0<r<R$ and set
\begin{equation} \label{funda}
h_{r,R}(x)=\frac{|x|^{-\gamma} - R^{-\gamma}}{r^{-\gamma} - R^{-\gamma}}  \; \mbox{ \rm for }\;  r < |x| < R\,.
\end{equation}
Observe that $h_{r,R}$ is $p$-harmonic in $B(0,R)\setminus \bar  B(0,r)$ and takes values $0$ and $1$ on $\partial B(0,R)$ and $\partial B(0,r)$ respectively.

\medskip

(a) ~Given the definition of $F,$  the comparison principle (Lemma \ref{l:comp}) implies that for each $s\in S$, we have
\begin{eqnarray}\label{neckeq1}
w(x)\le F+(1-F)\nu_1(x-s) , \qquad \forall x\in  B\left(s,\frac{\eps}{10}\right) \setminus  B(s,\alpha\eps),
\end{eqnarray}
where $\nu_1 (x)=h_{r,R}(x)$ with $r=\alpha \eps$,~$R={\eps}/{10}.$

Since
\begin{eqnarray} \label{neckeq2}
|x|=\eps/20 \; \Longrightarrow \; \nu_1(x) = \frac{(\eps/20)^{-\gamma} - (\eps/10)^{-\gamma} }{(\alpha \eps)^{-\gamma} - (\eps/10)^{-\gamma}} \leq
\frac{(\eps/20)^{-\gamma}} {(\alpha \eps)^{-\gamma} }
\le c_1 \tau_1^\gamma \eps \,,
\end{eqnarray}
we infer from \eqref{neckeq1} that
\begin{eqnarray}
  w(x) \leq F + (1-F) c_1\tau_1^\gamma  \eps    \qquad \forall x\in \partial B\left(s,\frac{\eps}{20}\right) \,.
\end{eqnarray}
This gives \eqref{D_ineq}.

\medskip

(b) ~ Observe that by \eqref{eq:boundviaF} we have  $w \le F$ on $\partial B(0,1+\eps/10).$ Hence,  by the comparison principle, for all $s\in S$  we have
\begin{eqnarray}\label{neckeq3}
w(x)\le F \nu_2 (x) , \qquad \forall x\in B\left(0, 1+\delta \right) \setminus B\left (0,1+\frac{\eps}{10}\right)\,,
\end{eqnarray}
where $\nu_2(x)=h_{r,R}(x)$ with $r=1+\eps/10$ and $R=1+\delta.$

Since the derivative of $t \mapsto t^{-\gamma}$ is bounded above and below by positive constants for $t \in [1,2]$, the mean value theorem gives
  \begin{equation} \label{vlocal}
  1 -  \nu_2\left(1+\frac{\eps}{5}\right)  = \frac{(1+\eps/10)^{-\gamma} - (1 + \eps/5)^{-\gamma}}{(1+\eps/10)^{-\gamma} - (1 + \delta)^{-\gamma}} \geq c_2 \frac{\eps}{\delta}\,.
\end{equation}
  Combining \eqref{neckeq3} and  \eqref{vlocal}, we have that
\begin{eqnarray}
w(x) \le \left ( 1-c_2\frac{\eps}{\delta}\right) F, \qquad  \forall x\in\partial B\left(0,1+\frac{\eps}{5}\right).
\end{eqnarray}
  This gives \eqref{G_ineq}.

\medskip

(c) ~ Denote by $\xi$  the standard basis vector $(1,0,\ldots,0)$ in   ${\mathbb R}^d,$ and consider the open cored half-ball
\begin{equation}
H:=\{x=(x_1,\ldots,x_d) \in B(0,8)\setminus \bar{B}(4\xi,1) \; : x_1>0 \}.
\end{equation}

\begin{figure}[h!]%
\centering \includegraphics[width=4in]{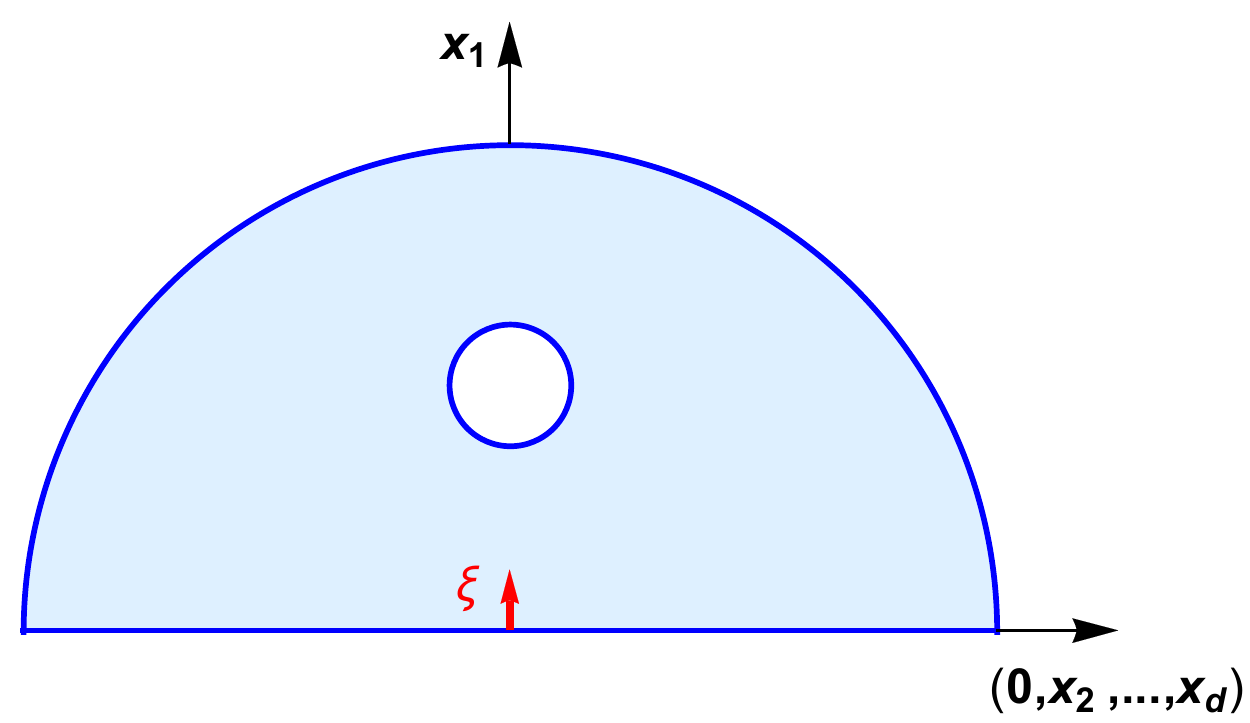}
	\label{fig:cored}
	\caption{The   cored half-ball
$H$ obtained by removing a unit ball from the middle of a half-ball of radius 8.}
\end{figure}

Let $\psi$ be the  Perron solution  of $\Delta_p \psi =0$ in $H$, with boundary values  $\psi =0$ on the flat part $\{ x \in \partial H \,: \,   x_1=0\}$ of $\partial H$, and $\psi = 1$ on the curved part $\{ x \in    \partial B(0,8) \,: x_1>0\} \cup \partial B(4\xi,1)$. (By  Proposition 9.31 in \cite{hkm}, the upper and lower Perron solutions coincide since the boundary conditions are lower semi-continuous and $H$ is regular.) By the strong maximum principle (see \cite{Lindq}, Corollary \ 2.22), we have
$\psi(x) <1$ for all $x \in H.$  By the continuity of $\psi$ in $H$, for some $c_3>0$, we have
\begin{equation} \label{psimax}
 \sup \{\psi(x) \,: \, x \in \partial B(4\xi,2)\} =1-c_3 \,.
\end{equation}
Fix $s \in S$ and let $s_*=(1+\eps/5)s$. The cored half-ball
\begin{equation}
H_s:=\{x \in B(s_*, 2\eps/5)\setminus \bar{B}(s,\eps/20) \; : \langle x,s \rangle<1+\eps/5 \} \,
\end{equation}
is a rotated and translated copy of $H$, scaled by $\eps/20$. The function $w$ is continuous on $\bar{H_s}$, and $p$-harmonic in $H_s$, with boundary values  $w \le G$ on the flat part of $\partial H_s$ and  $w \le D$ on the curved part. Therefore, by the comparison principle and \eqref{psimax}, we have
\begin{equation} \label{psimax2}
 \sup \{w(x) \,: \, x \in \partial B(s,\eps/10)\} \le G+ (1-c_3)(D-G)=(1-c_3)D+c_3 G \,.
\end{equation}
This proves (\ref{F_ineq}).
\end{proof}

\begin{proof}[{\bf Proof of Theorem \ref{neckthm}}]
Combining \eqref{D_ineq}-\eqref{F_ineq}, we have
\begin{eqnarray}
& F  \leq (1-c_3) \bigl[ F + c_1 \tau_1^\gamma \eps (1-F)\bigr] + c_3 \bigl[1 - c_2 \eps/\delta \bigr]F.
\end{eqnarray}
 This  gives
 \begin{eqnarray}
 0  \leq (1-c_3)c_1\tau_1^\gamma \eps(1-F) - c_3 c_2(\eps/\delta) F\,,
\end{eqnarray}
which in turn implies that
\begin{eqnarray}
 c_3 c_2 (\eps/\delta)F \leq   c_1 \tau_1^\gamma \eps\,,
 \end{eqnarray}
 whence
 \begin{eqnarray}
& F \leq \left(\frac{c_1 }{c_3 c_2} \right) \tau_1^\gamma  \delta \,.
\end{eqnarray}
Thus there exists a constant $C_{32}>0$ such that
$F \leq C_{32}\tau_1^\gamma \delta$. By \eqref{eq:boundviaF}, this completes the proof.
\end{proof}

\section{Asymptotics for $u^\eps$ near the unit sphere: Proof of Theorem \ref{main2} } \label{proofmain}
We will need Clarkson's inequalities, in the slightly more general form given in \cite{Boas} for functions $f,g$ taking values in   ${\mathbb R}^d$:
\begin{eqnarray} \label{clark1}
 p \ge 2 \Rightarrow \quad \left\| \frac{f + g}{2} \right\|_{ p}^p + \left\| \frac{f - g}{2} \right\|_{p}^p \le
 \frac{1}{2} \left( \| f \|_{p}^p + \| g \|_{ p}^p \right) \,,
\end{eqnarray}
and
\begin{eqnarray} \label{clark2}
  1<p \le 2 \Rightarrow  \quad\left\| \frac{f + g}{2} \right\|_{ p}^q + \left\| \frac{f - g}{2} \right\|_{ p}^q \le
 \left( \frac{1}{2} \| f \|_{ p}^p +\frac{1}{2} \| g \|_{ p}^p \right)^\frac{q}{p} \,,
 \end{eqnarray}
 where $\frac1{p} + \frac1{q} = 1$.

\begin{proof}[Proof of Theorem \ref{main2}]
 Recall that $\beta=(p-1)/(2p)$. As before, we let $C_i$ denote constants that depend only on $d,p$. Pick a small $\delta>0$.
 We invoke Lemma  \ref{l:5}  with $m$ chosen to satisfy $m\beta/3 \ge d$, and infer from \eqref{c13b}  that
$$
\forall y \in {\mathbb S}^{d-1}, \quad E(u, \Lambda_{2\delta}(y)) \le E(u_{A_*},\Lambda_{2\delta}(y))+2^d C_{21} \delta^d \le C_{33}\delta^{d-1} \,,
$$
provided $\eps$ is small enough.
Thus the hypothesis of Lemma  \ref{l:4}  holds for every $y \in {\mathbb S}^{d-1}$, so that lemma yields
\begin{eqnarray}\label{eq4:2new}
E(u,\Lambda_{\delta}(y)) \ge (1- 2\delta^{\beta/3}) E(u_{A},\Lambda_{\delta}(y))\,.
\end{eqnarray}
for some $A$ that may depend on all parameters of the problem, including $\delta$ and $\eps$.
In conjunction with \eqref{eq:bady3}, this yields that for small enough $\delta$,
\begin{eqnarray}\label{eq4:2new1}
E(u,\Lambda_{\delta}(y)) \ge (1- 3\delta^{\beta/3}) E(u_{A_*},\Lambda_{\delta}(y))\,.
\end{eqnarray}
By averaging this inequality  over $y \in {\mathbb S}^{d-1}$,   Fubini's Theorem  yields (as in the proof of \eqref{fub1}) that for sufficiently small $ \eps $,
\begin{eqnarray}
E(u) \mu(Q_\delta) \ge (1-3\delta^{\beta/3}) E(u_{A_*})\mu(Q_\delta)   \,.
\end{eqnarray}
We deduce that for $\eps$ small enough,
\begin{eqnarray}\label{soclose}
0 \le   E(u_{A_*}) -E(u) \le C\delta^{\beta/3} \,.
\end{eqnarray}
 Let $\phi$ be the cutoff function  defined in Remark \ref{rem:adm}. Then $u, u_{A_*} $  are both in the convex admissible class
${\mathcal A}(\Gamma, {\mathbb R}^d, \phi)$
 defined in \eqref{eq:bala-1}, and $u$ minimizes the energy in this class, so the inequality
 $$E\Bigl(\frac{u+u_{A_*}}{2} \Bigr) \ge E(u)$$
holds. In conjunction  with \eqref{soclose} and   Clarkson's inequalities, this implies that
$$\|\grad u -\grad u_{A_*}\|_p=O(\delta^c)$$
for some constant $c>0$ and all sufficiently small $\eps$. Since $\delta$ can be arbitrarily small, this proves the first statement of the theorem.

It only remains to prove the  $L^\infty$ convergence.

Recall that we have already established in \eqref{bulk1} that for small $\delta$ and (given $\delta$) sufficiently small $\eps$, we have
\begin{eqnarray} \label{estab}
\forall x \in \R^d \setminus B(0,1+\delta), \qquad  (A_*-C_{\#} \delta^\theta) U_{1+\delta}(x) \le u(x) \le (A_*+C_{\#} \delta^\theta)U_{1+\delta}(x)  \,.
\end{eqnarray}
for some $0< \theta<1$.

If $\tau=\infty$,  then $A^*=1$ and the first inequality in \eqref{estab}, together with the minimum principle,
 imply that $u \ge  1-C_{\#} \delta^\theta$ in $B(0,1+\delta)$, completing the proof in this case. Thus, we may assume that $\tau<\infty$.

Next, recall the continuous $p$-harmonic function
$w$ from Theorem \ref{neckthm} defined in the regular domain
$$\Omega_{\alpha} = B(0,1 +\delta) \setminus \bigl[ \cup_{s \in S} \bar{B}\bigl(s, \alpha\eps  \bigr) \bigr]
$$
considered there.  Theorem \ref{neckthm}  implies that $w \le C_{34}(2\tau)^{\gamma} \delta$ in the closure of the domain
$$\Omega_{1/10} = B(0,1 +\delta) \setminus \bigl[ \cup_{s \in S} \bar{B}\bigl(s,  \eps/10  \bigr) \bigr] \,.
$$
 The inequality $u(x)-A_*-C_{\#}\delta ^\theta \le w$ holds on
 $\partial \Omega_{\alpha}$, so it must holds in $\Omega_{\alpha}$ as well, by
the comparison principle (Lemma \ref{l:comp}). In particular, for $\delta<\delta_0$ small enough (that may depend on $d,p,\sigma,\tau$), and $\epsilon$ small
enough given $\delta$, we have
$$\forall x \in \partial B(s,\eps/10), \quad u \le A_*+C_{\#}\delta^\theta +C_{34}(2\tau)^{\gamma} \delta  \le A_*+2C_{\#}\delta ^\theta  \,. $$
On the other hand, $u \ge  A_*-C_{\#} \delta^\theta$ in $B(0,1+\delta)$ by \eqref{estab}   and the minimum principle (or by Lemma \ref{l:comp}.)
Thus
\begin{eqnarray}\label{so4}
 |u-u_{A_*}| \le 2C_{\#} \delta^\theta  \quad \text{in} \quad  \Omega_{1/10} \,.
\end{eqnarray}

Finally, fix an anchor $s \in S$ and let   $D_s:=B(s,\eps/10) \setminus (s+\alpha \eps K_s)$.  Observe that $u_{A^*}$, restricted to $D_s$,
is the Perron solution of the $p$-Laplace equation in $D_s$, with the boundary condition $f_s$ which is identically 1 on $s+\alpha \eps K_s$ and equals $A_*$ on
$\partial B(s,\eps/10)$.
The inequality
\begin{eqnarray}\label{soclose3}
u_{A_*}(x) -2C_{\#} \delta^\theta  \le u(x) \le u_{A_*}(x)+2C_{\#} \delta^\theta
\end{eqnarray}
  holds for $x \in \partial D_s$.

Let $\psi$ be a superharmonic function in  the upper class  $U_f^\Omega$ (see Definition \ref{def:upper}) corresponding to the boundary conditions
$$ f \equiv 1 \; \text{on} \; \Gamma, \quad f(\infty)=0$$
 in the definition \eqref{eq:2} of  $u$ as a Perron solution in $\Omega={\mathbb R}^d \setminus \Gamma$. Then $\psi+2C_{\#} \delta^\theta $, restricted to $D_s$, is in the
upper class  $U_{f_s}^{D_s}$  by \eqref{soclose3}, so $\psi+2C_{\#} \delta^\theta  \ge u_{A_*}$ in $D_s$. Taking an infimum over all such $\psi \in U_f^\Omega$,
we infer that
 \begin{eqnarray} \label{close4}
 u+2C_{\#} \delta^\theta  \ge u_{A_*} \quad \text{in} \;  D_s \,.
 \end{eqnarray}

 Similarly, let $\psi_1$ be a superharmonic function in  the upper class  $U_{-f}^\Omega$. Then $\psi_1+2C_{\#} \delta^\theta $, restricted to $D_s$, is in the
upper class  $U_{-f_s}^{D_s}$ by \eqref{soclose3}, so $\psi_1+2C_{\#} \delta^\theta  \ge -u_{A_*}$ in $D_s$. Taking an infimum over all such $\psi_1 \in U_{-f}^\Omega$,
we infer that
 \begin{eqnarray} \label{close5}
 -u+2C_{\#} \delta^\theta  \ge -u_{A_*} \quad \text{in} \;  D_s \,.
 \end{eqnarray}
   Combining this with \eqref{close4} concludes the proof, since $\delta$ can be arbitrarily small.
\end{proof}

\section{Concluding remarks.}
\noindent{\bf 1.} One method to obtain an asymptotically equidistributed set of anchors is to choose $S(\eps)$ as any
$\epsilon$-separated set on the sphere of maximal cardinality. The asymptotic equidistribution
\begin{eqnarray}\label{eq:4maak}
\frac{1}{|S(\eps)|}
 \sum_{s\in S(\eps)} \delta_s \overset{\ast} \rightharpoonup   \mu \: ~ \mbox{as} ~ \eps\downarrow 0,
\end{eqnarray}
 then follows from a classical argument of Maak, as presented, e.g., in
\cite[Chapter 12]{Don}, while the existence of the limit  $\sigma=\lim_{\eps \downarrow 0} \eps^{d-1} |S(\eps)|$ is established in \cite{Ham}.
\medskip

\noindent{\bf 2.} The main results of this paper directly extend  to problem \eqref{eq:2} considered
in a ball $B(0,R)\subset \mathbb{R}^d$  with $R>1.$  Indeed, let $1<p<d,$   $R>1$ and let $w=w^{\eps}$ be the Perron solution of the following problem
\begin{eqnarray}\label{eq:2conc}
\left\{
\begin{array}{lll}
\Delta_p w=0 &\mbox{in} & B(0,R) \setminus  \Gamma,\\
w=1 & \mbox{on} & \Gamma,\\
w = 0 & \mbox{on} & \partial B(0,R).
\end{array}
\right.
\end{eqnarray}
Define an ansatz
\begin{eqnarray}\label{eq:2da3cor}
w^{\eps}_{A}(x) := A W_{\eps,R}(x) +(1-A) \sum_{s\in S}V_{\frac{1}{10\alpha}}^s  \left(\frac{x-s}{\alpha \eps}\right)\,,
\end{eqnarray}
where $W_{\eps,R}$ is the $p$-equilibrium potential of the   ball $\bar{B}(0,1+\eps)$  relative to the ball $B(0,R)$,
  given by
\begin{eqnarray}\label{eq:solUcor}
W_{\eps,R}(x):=\left\{
\begin{array}{ll}
1& 0\le |x| \le 1+\eps \,,\\
\frac{|x|^{-\gamma}-R^{-\gamma}} {(1+\eps)^{-\gamma}-R^{-\gamma}}& 1+\eps<|x|\le R \,,
\end{array}
\right.
\end{eqnarray}
and $V_\rho^s$ are as in \eqref{eq:V}.

The following corollary holds:
\begin{cor}
Suppose that hypotheses ${\rm (H_1),(H_2),(H_3)}$ hold.
  Then, as $ \eps\to 0$,
\begin{eqnarray} \label{eq:thm1Bcor}
w^\eps(x)\to \left\{
\begin{array}{lll}
0& \mbox{if} & \tau=0 \,,\\
A_R W_{0,R}(x) & \mbox{if} &\  \tau\in(0,\infty) \,,\\
W_{0,R}(x) &  \mbox{if} & \tau= \infty \,
\end{array}
\right.
\end{eqnarray}
uniformly on compact subsets of  $\bar B(0,R) \setminus \mathbb{S}^{d-1},$
where for $\tau \in [0,\infty)$,
\begin{eqnarray}\label{eq:B*}
A_R=A_R(\tau)= \frac{\left(\sigma \tau^{d-p} {\rm cap}_p(K)\right)^\frac{1}{p-1}}{\left(\sigma \tau^{d-p} {\rm cap}_p(K)\right)^\frac{1}{p-1}+\left({\rm cap}_p(\bar B(0,1),B(0,R))\right)^\frac{1}{p-1}} \,.
\end{eqnarray}
Furthermore, as $ \eps\to 0$,
\begin{eqnarray} \label{eq:thm1Bcor2}
{\rm cap}_p(\Gamma_\eps, B(0,R))\to \left\{
\begin{array}{lll}
0& \mbox{if} & \tau=0 \,,\\
A_R^p {\rm cap}_p (\bar B(0,1),B(0,R))+(1-A_R)^p  {\rm cap}_p (K) \sigma \tau^{d-p} & \mbox{if} &\  \tau\in(0,\infty) \,,\\
{\rm cap}_p(\bar B(0,1),B(0,R))  &  \mbox{if} & \tau= \infty \, .
\end{array}
\right.
\end{eqnarray}
Moreover,
\begin{eqnarray}
\Vert \nabla w-\nabla w^{\eps}_{A_R}\Vert_{{L^p(\bar B(0,R))}}\to 0,
\end{eqnarray}
and
\begin{eqnarray}
\Vert w-w_{A_R}\Vert_{L^{\infty}(\bar B(0,R))}\to 0 \,.
\end{eqnarray}
\end{cor}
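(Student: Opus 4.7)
The plan is to show that the entire machinery developed in Sections 2--6 transfers with only cosmetic modifications, driven by the substitution $U \mapsto W_{0,R}$ and ${\rm cap}_p(\bar{B}(0,1)) \mapsto {\rm cap}_p(\bar{B}(0,1), B(0,R))$. First I would establish the analog of Corollary \ref{cor:en} for the new ansatz $w^\eps_A$. Since the gradients of $A\, W_{\eps,R}$ and $(1-A)\sum_s V^s_{1/(10\alpha)}((x-s)/(\alpha\eps))$ still have disjoint supports, the energy decomposes as in \eqref{decomp-ener}. Using the easily verified identity $E(W_{\eps,R}) = (1+\eps)^{d-p}(1-(1+\eps)^\gamma R^{-\gamma})^{-(p-1)} \gamma^{p-1}\omega_{d-1}$, which converges to ${\rm cap}_p(\bar B(0,1),B(0,R))$, one obtains
\begin{eqnarray*}
E(w^\eps_A) \to \varphi_{\tau,R}(A) := A^p\, {\rm cap}_p(\bar{B}(0,1),B(0,R)) + (1-A)^p\, {\rm cap}_p(K)\,\sigma\tau^{d-p}
\end{eqnarray*}
uniformly in $A\in[0,1]$ when $\tau<\infty$, with the analogous behavior for $\tau=\infty$. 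Differentiation then recovers the minimizer $A_R$ of \eqref{eq:B*}.

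Next I would adapt Lemma \ref{l:holder}: for a nonnegative $v$ with $v(y)\ge \tilde A$ on $\{|x|=1\}\cap\Lambda$ and $v=0$ on $\{|x|=R\}$, the Hölder inequality argument integrated over $r\in[1,R]$ (rather than $[1,\infty)$) gives
\begin{eqnarray*}
\tilde A^p\, \gamma^{p-1}(1-R^{-\gamma})^{-(p-1)} \le \int_1^R |\nabla v(ry)|^p r^{d-1}\,dr,
\end{eqnarray*}
so that the radial extremizer is precisely $\tilde A\, W_{0,R}$ and the lower bound on $E(v,\Lambda\cap B(0,R))$ matches $\mu(Q){\rm cap}_p(\bar B(0,1),B(0,R))\tilde A^p$. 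All the oscillation/energy estimates of Section \ref{heart} (Lemmas \ref{l:1}--\ref{l:4}) are purely local statements about $p$-harmonic functions in annular regions away from $\partial B(0,R)$ (since anchors live near the unit sphere and $\delta\ll R-1$), so they apply to $w^\eps$ verbatim, with $\varphi_\tau$ and $A_*$ replaced by $\varphi_{\tau,R}$ and $A_R$ in Lemma \ref{l:4}.

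I would then repeat Section \ref{proofthm1}. The Fubini/energy-minimality argument of Lemma \ref{l:5} requires only that $w^\eps$ be the minimizer of the energy in the admissible class attached to $(\Gamma,B(0,R))$ and that $E(w^\eps)$ be bounded; both hold by the relative capacity machinery of Section 2. The convergence \eqref{eq:lem5const} then asserts $|w^\eps(x)-A_R|\le C_\# \delta^\theta$ on $\partial B(0,1\pm\delta)$. The comparison principle (Lemma \ref{l:comp}) applied now in the bounded annulus $B(0,R)\setminus \bar B(0,1+\delta)$, using $A_R\, W_{\delta,R}$ as the comparison function (which has boundary value $A_R$ on $\partial B(0,1+\delta)$ and $0$ on $\partial B(0,R)$), yields the analog of \eqref{bulk1}. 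The inner bound \eqref{bulk2} is unchanged. Letting $\delta\downarrow 0$ gives \eqref{eq:thm1Bcor}, and the energy limit \eqref{eq:thm1Bcor2} follows from combining the energy identity $E(w^\eps)={\rm cap}_p(\Gamma_\eps,B(0,R))$ with the fact that $E(w^\eps) - E(w^\eps_{A_R}) \to 0$ (established via the Fubini averaging as in \eqref{soclose}).

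Finally, for the $L^p$ gradient and $L^\infty$ convergence, the Clarkson inequality argument in the proof of Theorem \ref{main2} is abstract: it only uses that $w^\eps$ minimizes the energy in a convex admissible class containing $w^\eps_{A_R}$, together with $E(w^\eps_{A_R})-E(w^\eps)\to 0$. For the $L^\infty$ estimate near the anchors, I would apply Theorem \ref{neckthm} exactly as stated (it is already formulated inside $B(0,1+\delta)$, independent of the outer boundary) and then use the Perron-solution comparison inside each $D_s=B(s,\eps/10)\setminus(s+\alpha\eps K_s)$, identical to the endgame of Section \ref{proofmain}. The only mildly delicate point---and the one I expect to be the main technical obstacle---is verifying the analog of \eqref{fubnew}, i.e.\ that $E(W_{\delta,R},\Lambda_\delta(z))=O(\delta^{d-1})$ as $\delta\to 0$ (with constants uniform in $\eps$), which follows from a direct calculation exploiting that $W_{\delta,R}\to W_{0,R}$ in $L^{1,p}$ on any fixed annulus $\{1+\delta_0 \le |x|\le R\}$, and that the contribution near $|x|=1+\delta$ is controlled by the Hölder argument applied on a thin annulus.
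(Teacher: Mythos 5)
Your proposal is correct and takes essentially the same approach as the paper, which simply asserts that the corollary follows by a line-by-line adaptation of the preceding sections; your write-up spells out exactly which substitutions drive that adaptation ($U\mapsto W_{0,R}$, ${\rm cap}_p(\bar B(0,1))\mapsto{\rm cap}_p(\bar B(0,1),B(0,R))$, the truncated-cone Hölder bound on $[1,R]$ instead of $[1,\infty)$, comparison in the bounded annulus, and the unchanged local estimates and Theorem \ref{neckthm}). The only quibble is that the "mildly delicate point" you flag at the end is in fact immediate: for $\tau=\infty$ one has $w^\eps_{A_R}=W_{\eps,R}$, and a direct computation gives $E(W_{\eps,R},\Lambda_\delta(z))=\gamma^{p-1}\omega_{d-1}\mu(Q_\delta)(1+\eps)^{d-p}\bigl(1-(1+\eps)^\gamma R^{-\gamma}\bigr)^{-(p-1)}=O(\delta^{d-1})$ with constants uniform in $\eps$, so no limiting argument in $L^{1,p}$ is required.
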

The proof of this corollary is a line by line adaptation of the arguments in the preceding sections.

\bigskip

\noindent {\bf Acknowledgments.} The work of PVG was supported in a part by the Simons Foundation grant 317882  and US-Israel BSF grant 2020005.
The work of FN was partially supported by the NSF grant DMS-1900008 and US-Israel BSF grant 2020019.

\begin{bibdiv}
\begin{biblist}

\bib{Adams-Fournier}{book}{
   author={Adams, Robert A.},
   author={Fournier, John J. F.},
   title={Sobolev spaces},
   series={Pure and Applied Mathematics (Amsterdam)},
   volume={140},
   edition={2},
   publisher={Elsevier/Academic Press, Amsterdam},
   date={2003},
}
		
\bib{Adams-Hedberg}{book}{
   author={Adams, David R.},
   author={Hedberg, Lars Inge},
   title={Function spaces and potential theory},
   series={Grundlehren der mathematischen Wissenschaften [Fundamental
   Principles of Mathematical Sciences]},
   volume={314},
   publisher={Springer-Verlag, Berlin},
   date={1996},
}

\bib{Boas}{article}{
   author={Boas, R. P., Jr.},
   title={Some uniformly convex spaces},
   journal={Bull. Amer. Math. Soc.},
   volume={46},
   date={1940},
   pages={304--311},
}	

\bib{CM97}{article}{
   author={Cioranescu, Doina},
   author={Murat, Fran\c{c}ois},
   title={A strange term coming from nowhere},
   conference={
      title={Topics in the mathematical modelling of composite materials},
   },
   book={
      series={Progr. Nonlinear Differential Equations Appl.},
      volume={31},
      publisher={Birkh\"{a}user Boston, Boston, MA},
   },
   date={1997},
   pages={45--93},
}

\bib{Don}{book}{
   author={Donoghue, William F., Jr.},
   title={Distributions and Fourier transforms},
   series={Pure and Applied Mathematics},
   volume={32},
   publisher={Academic Press, New York},
   date={1969},
   pages={viii+315},
}

\bib{EG92}{book}{
   author={Evans, Lawrence C.},
   author={Gariepy, Ronald F.},
   title={Measure theory and fine properties of functions},
   series={Studies in Advanced Mathematics},
   publisher={CRC Press, Boca Raton, FL},
   date={1992},
}

\bib{GPPS}{article}{
   author={G\'{o}mez, D.},
   author={P\'{e}rez, E.},
   author={Podolskii, A. V.},
   author={Shaposhnikova, T. A.},
   title={Homogenization of variational inequalities for the $p$-Laplace
   operator in perforated media along manifolds},
   journal={Appl. Math. Optim.},
   volume={79},
   date={2019},
   number={3},
   pages={695--713},
}

\bib{Ham}{article}{
   author={Hamkins, Jon},
   author={Zeger, Kenneth},
   title={Asymptotically dense spherical codes. I. Wrapped spherical codes},
   journal={IEEE Trans. Inform. Theory},
   volume={43},
   date={1997},
   number={6},
   pages={1774--1785},
}	
		
\bib{hkm}{book}{
   author={Heinonen, Juha},
   author={Kilpel\"{a}inen, Tero},
   author={Martio, Olli},
   title={Nonlinear potential theory of degenerate elliptic equations},
   publisher={Dover Publications Inc., Mineola, NY},
   date={2006},
}

\bib{KS14}{article}{
   author={Karakhanyan, Aram L.},
   author={Str\"{o}mqvist, Martin H.},
   title={Application of uniform distribution to homogenization of a thin
   obstacle problem with $p$-Laplacian},
   journal={Comm. Partial Differential Equations},
   volume={39},
   date={2014},
   number={10},
   pages={1870--1897},
}

\bib{KS16}{article}{
   author={Karakhanyan, Aram L.},
   author={Str\"{o}mqvist, Martin H.},
   title={Estimates for capacity and discrepancy of convex surfaces in
   sieve-like domains with an application to homogenization},
   journal={Calc. Var. Partial Differential Equations},
   volume={55},
   date={2016},
   number={6},
   pages={Art. 138, 14},
}
		
\bib{Lindq}{book}{
   author={Lindqvist, Peter},
   title={Notes on the stationary $p$-Laplace equation},
   series={Springer Briefs in Mathematics},
   publisher={Springer, Cham},
   date={2019},
}

\bib{MKhomo}{book}{
   author={Marchenko, Vladimir A.},
   author={Khruslov, Evgueni Ya.},
   title={Homogenization of partial differential equations},
   series={Progress in Mathematical Physics},
   volume={46},
   publisher={Birkh\"{a}user Boston, Inc., Boston, MA},
   date={2006},
}
	
\bib{Mazya}{book}{
   author={Maz'ya, Vladimir},
   title={Sobolev spaces with applications to elliptic partial differential
   equations},
   series={Grundlehren der mathematischen Wissenschaften [Fundamental
   Principles of Mathematical Sciences]},
   volume={342},
   edition={Second, revised and augmented edition},
   publisher={Springer, Heidelberg},
   date={2011},
}

\bib{pLrev}{article}{
   author={Mingione, Giuseppe},
   author={Palatucci, Giampiero},
   title={Developments and perspectives in nonlinear potential theory},
   journal={Nonlinear Anal.},
   volume={194},
   date={2020},
   pages={Article  no.\ 111452, 17 pp.},
}

\bib{PS}{article}{
   author={Peres, Yuval},
   author={Sheffield, Scott},
   title={Tug-of-war with noise: a game-theoretic view of the $p$-Laplacian},
   journal={Duke Math. J.},
   volume={145},
   date={2008},
   number={1},
   pages={91--120},
}

\end{biblist}
\end{bibdiv}

\end{document}